 \numberwithin{equation}{section}
 \theoremstyle{plain}
 \newtheorem{theorem}[equation]{Theorem}
 \newtheorem{corollary}[equation]{Corollary}
 \newtheorem{lemma}[equation]{Lemma}
 \newtheorem{proposition}[equation]{Proposition}
 \newtheorem{question}[equation]{Question}
 \theoremstyle{definition}
 \newtheorem{definition}[equation]{Definition}
 \newtheorem{example}[equation]{Example}
 \newtheorem{remark}[equation]{Remark}
 \DeclareMathOperator{\Hom}{Hom}
 \DeclareMathOperator{\End}{End}
 \DeclareMathOperator{\im}{im}
 \DeclareMathOperator{\ann}{ann}
 \DeclareMathOperator{\soc}{soc}
 \DeclareMathOperator{\Spec}{Spec}
 \DeclareMathOperator{\Max}{Max}
 \DeclareMathOperator{\Ext}{Ext}
 \DeclareMathOperator{\Tor}{Tor}
 \DeclareMathOperator{\core}{core}
 \newcommand{\F}{\mathcal{F}}
 \newcommand{\C}{\mathcal{C}}
 \newcommand{\E}{\mathcal{E}}
 \newcommand{\setS}{\mathcal{S}}
 \newcommand{\G}{\mathcal{G}}
 \newcommand{\M}{\mathfrak{M}}
 \newcommand{\m}{\mathfrak{m}}
 \newcommand{\T}{\mathcal{T}}
 \newcommand{\idealizer}{\mathbb{I}}
 \newcommand{\separate}{\bigskip}
\begin{document}

 \title[A one-sided Prime Ideal Principle for noncommutative rings]
 {A one-sided Prime Ideal Principle\\
 for noncommutative rings}
\author{Manuel L. Reyes}
\address{Department of Mathematics\\
University of California\\
Berkeley, CA 94720, USA}
\email{mreyes@math.berkeley.edu}
\urladdr{\href{http://math.berkeley.edu/~mreyes/}{http://math.berkeley.edu/~mreyes/}}
\thanks{The author was supported in part by a Ford Foundation Predoctoral
Diversity Fellowship. This paper forms part of his Ph.D.\ dissertation at the University
of California, Berkeley.}

\date{December 23, 2010}
\subjclass[2010]{Primary: 16D25, 16D80, 16U10; Secondary:  16S90}
\keywords{Right ideals, completely prime, Prime Ideal Principle, right Oka families, 
cyclic modules, extensions of modules, monoform modules, right Gabriel filters.}

\begin{abstract}
\emph{Completely prime right ideals} are introduced as a one-sided generalization
of the concept of a prime ideal in a commutative ring.
Some of their basic properties are investigated, pointing out both similarities
and differences between these right ideals and their commutative counterparts.
We prove the \emph{Completely Prime Ideal Principle}, a theorem stating that
right ideals that are maximal in a specific sense must be completely prime.
We offer a number of applications of the Completely Prime Ideal Principle arising from
many diverse concepts in rings and modules.
These applications show how completely prime right ideals control the one-sided
structure of a ring, and they recover earlier theorems stating that certain
noncommutative rings are domains (namely, proper right PCI rings and rings
with the right restricted minimum condition that are not right artinian).
In order to provide a deeper understanding of the set of completely prime
right ideals in a general ring, we study the special subset of
\emph{comonoform right ideals}.
\end{abstract}

\maketitle

\section{Introduction}
\label{introduction section}

Prime ideals form an important part of the study of commutative algebra. While there
are many reasons why this is so, in this paper we will focus on the fact that
\emph{prime ideals control the structure of commutative rings}. The following two
theorems due to I.\,S.~Cohen (\cite[Thm.~2]{Cohen} and~\cite[Thm.~7]{Cohen}, respectively)
illustrate the kinds of structure theorems that we shall consider.

\begin{theorem}[Cohen's Theorem]
\label{original Cohen's Theorem}
A commutative ring $R$ is noetherian iff every prime ideal of $R$ is finitely
generated.
\end{theorem}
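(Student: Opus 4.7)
The forward direction is immediate: in a noetherian ring every ideal, and in particular every prime ideal, is finitely generated. The plan for the reverse direction is to prove the contrapositive by a Zorn's-lemma-plus-primality argument, which is the classical prototype for the kind of Prime Ideal Principle the paper will later abstract.

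Assuming $R$ is not noetherian, I would introduce the family
\[
\F = \{I \subseteq R : I \text{ is an ideal that is not finitely generated}\},
\]
which is nonempty by assumption. The union of any chain in $\F$ again lies in $\F$: if the union were finitely generated, all of its generators would already sit inside a single member of the chain, making that member finitely generated, a contradiction. Hence Zorn's lemma produces an element $P \in \F$ that is maximal among non-finitely-generated ideals. The goal is to show that $P$ is prime, which will contradict the hypothesis.

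To prove primality, suppose for contradiction that there exist $a, b \in R \setminus P$ with $ab \in P$. Then $P + (a)$ strictly contains $P$, so by maximality it is finitely generated, say $P + (a) = (p_1 + r_1 a, \ldots, p_n + r_n a)$ with each $p_i \in P$. The ideal quotient $(P : a) = \{r \in R : ra \in P\}$ contains both $P$ and the element $b \notin P$, so it too strictly contains $P$ and is therefore finitely generated. The key algebraic step is the identity $P = (p_1, \ldots, p_n) + a \cdot (P : a)$: given any $x \in P \subseteq P + (a)$, writing $x = \sum s_i(p_i + r_i a)$ and subtracting the $p_i$-part forces $a \sum s_i r_i \in P$, so $\sum s_i r_i \in (P : a)$. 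Pulling in finite generators of $(P : a)$ then exhibits $P$ itself as finitely generated, contradicting $P \in \F$.

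The main obstacle I anticipate is the verification that $P = (p_1, \ldots, p_n) + a(P : a)$, which combines the generators of $P + (a)$ and $(P : a)$ in a slightly non-obvious way; this is the technical heart. The conceptual heart is the move from a maximal counterexample to primality, precisely the template that the paper sets out to generalize to the noncommutative setting via \emph{completely prime right ideals}.
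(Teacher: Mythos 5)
Your proposal is correct and follows essentially the same route the paper takes: the identity $P=(p_1,\dots,p_n)+a\,(P:a)$ is exactly Nagata's lemma, i.e.\ the statement that finitely generated ideals form an Oka family (the commutative case of Proposition~\ref{f.g. right ideals}, where the paper writes $I=I_0+a(a^{-1}I)$), and your Zorn-plus-maximal-implies-prime step is the commutative specialization of the CPIP~\ref{CPIP} and its Supplement~\ref{CPIP supplement} used to prove Theorem~\ref{Cohen's Theorem}. So this is the intended argument, just phrased without the Oka-family language.
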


\begin{theorem}[Cohen]
\label{Cohen's invertible theorem}
A commutative ring $R\neq 0$ is a Dedekind domain iff every nonzero prime ideal
of $R$ is invertible.
\end{theorem}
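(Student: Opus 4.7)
The ``if'' direction is the substantive one, since in a Dedekind domain every nonzero ideal is by definition invertible. Assuming that every nonzero prime of $R$ is invertible, my plan is to show in three stages that $R$ is noetherian, that $R$ is a domain, and finally that every nonzero ideal of $R$ is invertible.

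For noetherianity, any invertible ideal is finitely generated, so every nonzero prime of $R$ is finitely generated. The zero ideal is trivially finitely generated, so every prime of $R$ is finitely generated, and Cohen's Theorem~\ref{original Cohen's Theorem} gives that $R$ is noetherian. To see that $R$ is a domain, I would use that an invertible ideal $I$ is faithful (if $rI = 0$ then $r = r \sum x_i y_i = \sum (rx_i) y_i = 0$ for any expression $1 = \sum x_i y_i$ with $x_i \in I$ and $y_i \in I^{-1}$), so in the noetherian ring $R$ such an $I$ must contain a regular element, by prime avoidance over the associated primes. Since a minimal prime of a noetherian ring is associated and therefore consists of zero-divisors, the invertibility hypothesis forces the only minimal prime of $R$ to be $(0)$; hence $R$ is a domain.

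For the third stage I would argue in the spirit of the Prime Ideal Principle. Let $\F$ denote the family of invertible ideals of $R$, together with $R$ itself. If some nonzero ideal fails to lie in $\F$, noetherianity yields an ideal $I$ maximal among such counterexamples, and it suffices to show that $I$ is prime, for then the hypothesis would give the desired contradiction. Suppose $a, b \in R \setminus I$ with $ab \in I$. Then $(I, a)$ and $(I : a)$ both properly contain $I$ (the latter contains $b$), so by maximality both belong to $\F$. The surjection $\phi : I \oplus R \twoheadrightarrow (I, a)$ defined by $(x, r) \mapsto x + ra$ splits since $(I, a)$ is projective, and its kernel is identified with $(I : a)$ via the second projection, yielding $I \oplus R \cong (I, a) \oplus (I : a)$. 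The right-hand side is finitely generated and projective, so its direct summand $I$ is as well; because $R$ is a domain and $I$ is nonzero, this projectivity upgrades to invertibility, contradicting $I \notin \F$.

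The main obstacle is verifying the ``Oka-type'' closure property for invertible ideals: producing the decomposition $I \oplus R \cong (I,a) \oplus (I:a)$ and then converting projectivity of $I$ back into invertibility using the domain hypothesis. This closure is precisely the commutative shadow of the Prime Ideal Principle philosophy, and one expects the noncommutative machinery developed later in the paper to subsume such arguments as instances of a single pattern.
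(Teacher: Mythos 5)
Your proof is correct, but note that the paper itself does not prove this statement: it is quoted from Cohen (\cite[Thm.~7]{Cohen}), with only the remark that passing from domains to arbitrary commutative rings is not hard, so the natural comparison is with the machinery the paper builds for its noncommutative analogue in \S\ref{applications section}. There, Proposition~\ref{invertible right ideals} shows that the invertible (right) ideals form an Oka family --- your splitting $I\oplus R\cong (I,a)\oplus(I:a)$ via the split surjection $(x,r)\mapsto x+ra$ is exactly the commutative argument of~\cite{LR}, which the paper redoes by an element-wise computation because the splitting trick does not transfer verbatim to one-sided ideals --- and the analogue of the present theorem is then deduced from the CPIP Supplement~\ref{CPIP supplement}(2), using only that invertible ideals are finitely generated. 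Measured against that route, your argument takes two avoidable detours. First, you prove $R$ noetherian via Theorem~\ref{original Cohen's Theorem} in order to extract a maximal counterexample by ACC; the Supplement makes this unnecessary, since finite generation of the members of $\F$ already guarantees that chains in $\F'$ have upper bounds, and one concludes directly that every nonzero ideal is invertible. Second, your domain step runs through faithfulness, associated primes, and prime avoidance; it is quicker to observe that an invertible ideal contains a regular element outright (clear denominators in $1=\sum x_iq_i$ with the $q_i$ in the total quotient ring), so a nonzero minimal prime, which consists of zero-divisors, could never be invertible --- no noetherian input needed --- or, once all nonzero ideals are invertible, that $ab=0$ with $a\neq 0$ forces $b=0$ because $(a)$ contains a regular element. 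The standard facts you invoke (invertible implies finitely generated; a nonzero finitely generated projective ideal of a domain is invertible, by the dual basis lemma) are used correctly, and the ``main obstacle'' you flag at the end is in fact already discharged in your own argument. So the proposal is sound and self-contained; it simply pays for its elementary character with extra noetherian and associated-prime steps that the Oka-family formalism renders unnecessary.
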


(Notice that Cohen originally stated Theorem~\ref{Cohen's invertible theorem} for
commutative \emph{domains} $R\neq 0$, but it is not hard to extend his result to
arbitrary commutative rings.)
While prime two-sided ideals are studied in noncommutative rings, it is safe to say
that they do not control the structure of noncommutative rings in the sense of the two
theorems above. Part of the trouble is that many complicated rings have few two-sided
ideals. Some of the most dramatic examples are the simple rings, which have only one
prime ideal but often have interesting one-sided structure.

In this paper we propose a remedy to this situation by studying \emph{completely
prime right ideals} (introduced in~\S\ref{completely prime section}), which are
a certain type of ``prime one-sided ideal'' illuminating the structure of a
noncommutative ring.
The idea of prime one-sided ideals is not new, as evidenced by numerous attempts to
define such objects in the literature (for instance, see~\cite{Andrunakievich}, \cite{Koh},
and~\cite{Michler}).
However, a common theme among earlier versions of one-sided prime ideals is that
they were produced by simply deforming the defining condition of a prime ideal in a
commutative ring ($ab\in\mathfrak{p}$ implies $a$ or $b$ lies in $\mathfrak{p}$).
Our approach is slightly less arbitrary, as it is inspired by the systematic analysis
in~\cite{LR} of results from commutative algebra in the vein of
Theorems~\ref{original Cohen's Theorem} and~\ref{Cohen's invertible theorem} above.
As a result, these one-sided primes are accompanied by a ready-made theory producing
a number of results that relate them to the one-sided structure of a ring.

A key part of the classical proofs of Cohen's theorems above is to show that an ideal
of a commutative ring that is maximal with respect to \emph{not} being finitely generated
(respectively, invertible) is prime. 
Many other famous theorems from commutative algebra state that an ideal maximal
with respect to a certain property must be prime. 
The \emph{Prime Ideal Principle} of~\cite{LR} unified a large number of these results,
making use of the fundamental notion of \emph{Oka families of ideals} in commutative
rings.
In~\S\ref{CPIP section}, after introducing Oka families of right ideals, we present
the \emph{Completely Prime Ideal Principle}~\ref{CPIP} (CPIP). This result generalizes
the Prime Ideal Principle of~\cite{LR} to one-sided ideals of a noncommutative ring. 
It formalizes a one-sided ``maximal implies prime'' philosophy: \emph{right ideals that
are maximal in certain senses tend to be completely prime}.
The CPIP is our main tool connecting completely prime right ideals to the (one-sided)
structure of a ring. For instance, it allows us to provide a noncommutative generalization
of Cohen's Theorem~\ref{original Cohen's Theorem} in Theorem~\ref{Cohen's Theorem}.

In order to effectively apply the CPIP, we investigate how to construct examples of
right Oka families (from classes of cyclic modules that are \emph{closed under extensions})
in~\S\ref{cyclic modules section}.
Most of the applications of the Completely Prime Ideal Principle are given
in~\S\ref{applications section}. Some highlights include a study of point annihilators
of modules over noncommutative rings, conditions for a ring to be a domain, a simple
proof that a right PCI ring is a domain, and a one-sided analogue of
Theorem~\ref{Cohen's invertible theorem}.

Finally in~\S\ref{comonoform section} we turn our attention to a special subset of
the completely prime right ideals of a ring, the set of \emph{comonoform right ideals}.
These right ideals are more well-behaved than completely prime right ideals generally
are. They enjoy special versions of the ``Prime Ideal Principle'' and its ``Supplement,''
which again allow us to produce results that relate these right ideals to the one-sided
structure of a noncommutative ring. Their existence is also closely tied to the
well-studied right Gabriel filters from the theory of noncommutative localization.

\subsection*{Conventions}

Throughout this paper, the symbol ``:='' is used to mean that the left-hand side
of the equation is defined to be equal to the right-hand side. All rings are assumed
to be associative with unit element, and all subrings, modules and ring homomorphisms
are assumed to be unital. Fix a ring $R$. We say that $R$ is a \emph{semisimple ring}
if $R_R$ is a semisimple module. We say $R$ is \emph{Dedekind-finite} if every
right invertible element is invertible; this is equivalent to the condition that $R_R$
is not isomorphic to a proper direct summand of itself (see~\cite[Ex.~1.8]{ExercisesModules}).
We write $I_R\subseteq R$ (resp.\ $I\lhd R$) to mean that $I$ is a right
(resp.\ two-sided) ideal in $R$. The term \emph{ideal} always refers to a two-sided
ideal of $R$, with the sole exception of the phrase ``Completely Prime Ideal Principle''
(Theorem~\ref{CPIP}). We let $\Spec(R)$ denote the set of prime (two-sided)  ideals of $R$.
An element of $R$ is \emph{regular} if it is not a left or right zero-divisor.
Given a family $\F$ of right ideals in $R$, we let $\F'$ denote the complement
of $\F$ within the set of all right ideals of $R$, and we let $\Max(\F')$ denote
the set of maximal elements of $\F'$. Given an $R$-module $M_R$, we let $\soc(M)$
denote the socle of $M$ (the sum of all simple submodules of $M$). Finally, we
use ``f.g.''\ as shorthand for ``finitely generated.''

\section{Completely prime right ideals}
\label{completely prime section}

In this section we define the completely prime right ideals that we shall study,
and we investigate some of their basic properties.
Throughout this paper, given an element $m$ and a submodule $N$ of a right
$R$-module $M_R$, we write
\[
m^{-1}N := \{ r \in R : mr \in N \},
\]
which is a right ideal of $R$. Except in~\S\ref{comonoform section},
we only deal with this construction in the form $a^{-1}I$ for an element $a$
and a right ideal $I$ of $R$. In commutative algebra, this ideal is usually
denoted by $(I:a)$.

\begin{definition}
\label{completely prime definition}
A right ideal $P_{R}\subsetneq R$ is \emph{completely prime} if for any
$a,b\in R$ such that $aP\subseteq P$, $ab\in P$ implies that either $a\in P$ or
$b\in P$ (equivalently, for any $a\in R$, $aP\subseteq P$ and $a\notin P$
imply $a^{-1}P=P$).
\end{definition}

Our use of the term ``completely prime'' is justified by the next result,
which characterizes the two-sided ideals that are completely prime as right
ideals. Recall that an ideal $P\lhd R$ is said to be \emph{completely prime}
if the factor ring $R/P$ is a domain (equivalently, $P\neq R$ and for all $a,b\in R$,
$ab\in P\implies a\in P$ or $b\in P$); for instance, see~\cite[p.~194]{FC}. 

\begin{proposition}
\label{two-sided completely primes}
For any ring $R$, an ideal $P\lhd R$ is completely prime as a right ideal
iff it is a completely prime ideal. In particular, an ideal $P$ is completely
prime as a right ideal iff it is completely prime as a left ideal.
\end{proposition}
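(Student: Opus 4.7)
The plan is to observe that when $P$ is two-sided, the technical hypothesis $aP\subseteq P$ in Definition~\ref{completely prime definition} becomes vacuous, so the right-ideal notion collapses onto the familiar two-sided notion. Everything then follows by directly unwinding the two definitions.

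More precisely, I would argue as follows. Suppose $P\lhd R$. For every $a\in R$, two-sidedness of $P$ gives $aP\subseteq P$ automatically. Now, if $P$ is completely prime as a right ideal, then for any $a,b\in R$ with $ab\in P$ the hypothesis $aP\subseteq P$ is satisfied for free, and Definition~\ref{completely prime definition} yields $a\in P$ or $b\in P$; together with $P\neq R$ this is exactly the condition that $R/P$ be a domain. Conversely, if $R/P$ is a domain and $a,b\in R$ satisfy $aP\subseteq P$ and $ab\in P$, then $a\in P$ or $b\in P$ follows from the usual two-sided completely prime property, and $P\subsetneq R$ holds, so Definition~\ref{completely prime definition} is met.

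For the ``In particular'' clause, one simply notes that the left-handed analogue of Definition~\ref{completely prime definition} is symmetric to the right-handed one, and the same argument applied on the left shows that $P$ being completely prime as a left ideal is equivalent to $R/P$ being a domain. Hence both one-sided conditions are equivalent to the two-sided condition, and therefore to each other.

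There is essentially no obstacle: the entire content of the proposition is that the side-condition $aP\subseteq P$ is trivially satisfied when $P$ is two-sided. The only thing to be mindful of is recording $P\neq R$ (equivalent to $P\subsetneq R$) as part of the equivalence, since both definitions include properness.
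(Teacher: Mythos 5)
Your argument is correct and coincides with the paper's own proof: both rest on the observation that $aP\subseteq P$ holds tautologically when $P$ is two-sided, so the right-ideal definition reduces to the usual completely prime condition, with the left-handed case following by the symmetry of that condition. Nothing further is needed.
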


\begin{proof}
For an ideal $P\lhd R$, we tautologically  have $aP\subseteq P$ for all
$a\in R$. So such $P\neq R$ is completely prime as a right ideal iff for every
$a,b\in R$, $ab\in P$ implies $a\in P$ or $b\in P$, which happens precisely
when $P$ is a completely prime ideal.
\end{proof}

\begin{corollary}
If $R$ is a commutative ring, then an ideal $P\lhd R$ is completely prime as a
(right) ideal iff it is a prime ideal.
\end{corollary}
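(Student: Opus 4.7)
The plan is to chain the corollary to the preceding proposition and then unpack the definition of a prime ideal in the commutative setting; no new machinery should be needed.

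First, I would apply Proposition~\ref{two-sided completely primes} to the two-sided ideal $P\lhd R$. That proposition already tells us that $P$ is completely prime as a right ideal if and only if $P$ is a completely prime (two-sided) ideal, which by the definition recalled just before the proposition means $P\neq R$ and $ab\in P$ implies $a\in P$ or $b\in P$ for all $a,b\in R$.

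Next, I would observe that in a commutative ring this element-wise condition is exactly the defining condition of a prime ideal. The standard characterization of primeness for a two-sided ideal $P$ of an arbitrary ring $R$ is that $aRb\subseteq P$ implies $a\in P$ or $b\in P$; when $R$ is commutative, $aRb = abR$, so $aRb\subseteq P$ is equivalent to $ab\in P$ because $P$ is an ideal. Thus, in the commutative case, primeness and complete primeness of a two-sided ideal coincide, which combined with the previous step gives the claim.

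There is no real obstacle here: both implications are formal once Proposition~\ref{two-sided completely primes} is in hand. The only point worth double-checking is the equivalence of primeness with the element-wise implication in the commutative setting, and this is a standard observation using $aRb = abR$.
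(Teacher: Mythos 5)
Your proposal is correct and matches the paper's (implicit) argument: the corollary is stated as an immediate consequence of Proposition~\ref{two-sided completely primes}, combined with the standard fact that for commutative rings the completely prime condition ($ab\in P\implies a\in P$ or $b\in P$, $P\neq R$) coincides with primeness. Your check via $aRb=abR$ is exactly the routine verification the paper leaves to the reader.
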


Thus completely prime right ideals extend the notion of completely prime
ideals in noncommutative rings, and these right ideals also directly generalize
the the concept of a prime ideal of a commutative ring. Some readers may wonder
whether it would be better to reserve the term ``completely prime'' for a right
ideal $P\subsetneq R$ satisfying the following property: for all $a,b\in R$, if
$ab\in P$ then either $a\in P$ or $b\in P$. (Such right ideals have been studied,
for instance, in~\cite{Andrunakievich}.) Let us informally refer to
such right ideals as ``extremely prime.'' We argue that one merit of completely
prime right ideals is that they occur in situations where extremely prime right
ideals are absent. We shall show in Corollary~\ref{maximal right ideal corollary} that every
maximal right ideal of a ring is completely prime, thus proving that every nonzero
ring has a completely prime right ideal.  On the other hand, there are many examples
of nonzero rings that do not have any extremely prime right ideals.
We thank T.\,Y.~Lam for helping to formulate the following result and G.~Bergman
for simplifying its proof.

\begin{proposition}
\label{no extremely primes}
Let $R$ be a simple ring that has a nontrivial idempotent. Then $R$ has no extremely
prime right ideals.
\end{proposition}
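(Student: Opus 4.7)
The plan is to derive a contradiction from the existence of an extremely prime right ideal $P \subsetneq R$ by exploiting a nontrivial idempotent $e \in R$ (so $e^2 = e$, $e \neq 0, 1$). The starting observation is that $e(1-e) = 0 \in P$, so the extreme prime condition immediately forces $e \in P$ or $1-e \in P$. Since $1-e$ is also a nontrivial idempotent, I can assume (by relabeling) that $e \in P$.

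The next step is to squeeze more elements into $P$ by repeatedly applying extreme primality to products that are forced to be zero. Specifically, for each $r \in R$ the identity $(re)(1-e) = 0 \in P$ gives either $re \in P$ or $1-e \in P$. I would split into the corresponding two cases. In the case $1 - e \in P$, combining with $e \in P$ yields $1 \in P$, and so $P = R$, contradicting $P \subsetneq R$.

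In the remaining case, $re \in P$ holds for every $r \in R$, so $Re \subseteq P$. Because $P$ is a right ideal, this gives $ReR \subseteq P$. Here the simplicity of $R$ enters decisively: $ReR$ is a nonzero two-sided ideal (it contains $e \neq 0$), hence $ReR = R$, and we again obtain $P = R$, a contradiction.

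The only real subtlety is the reduction ``WLOG $e \in P$,'' which relies on the fact that $1-e$ is itself a nontrivial idempotent of $R$; once that is in hand, the rest is a short chain of applications of the extremely prime condition, combined with the right ideal closure of $P$ and the simplicity of $R$. I do not expect any significant technical obstacle, as no structure beyond a single nontrivial idempotent and the lack of proper two-sided ideals is needed.
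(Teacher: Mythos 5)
Your proof is correct and follows essentially the same strategy as the paper's: exploit zero products built from the idempotent (the paper squares the corner set $eRf$, you use $(re)(1-e)=0$), then invoke simplicity ($ReR=R$, resp.\ $RfR=R$) to force $1\in P$ and a contradiction. The WLOG relabeling and case split on whether $1-e\in P$ are only cosmetic differences from the paper's symmetric argument that both $e$ and $f=1-e$ lie in the extremely prime right ideal.
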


\begin{proof}
Assume for contradiction that $I\subsetneq R$ is an extremely prime right ideal.
For any idempotent $e\neq 0, 1$ of $R$ we have $RfR=R$, where $f=1-e\neq 0$.
Since $I$ is extremely prime and $(eRf)^2=0\subseteq I$, we have $eRf\subseteq I$.
Hence $eR=eRfR\subseteq I$, so that $e\in I$. 
Similarly, $f\in I$. Hence $1=e+f\in I$, which is a contradiction.
\end{proof}

Explicit examples of such rings are readily available. Let $k$ be a division ring. Then
we may take $R$ to be the matrix ring $\mathbb{M}_n(k)$ for $n>1$. Alternatively,
if $V_k$ is such that $\dim_k(V)=\alpha$ is any infinite cardinal,
then we may take $R$ to be the factor of $E:=\End_k(V)$ by its unique maximal ideal 
$M=\{g\in E : \dim_k (g(V))<\alpha\}$ (see~\cite[Ex.~3.16]{ExercisesClassical}).
The latter example is certainly neither left nor right noetherian.
More generally, large classes of rings satisfying the hypothesis of
Proposition~\ref{no extremely primes} include simple von Neumann regular rings that
are not division rings, as well as purely infinite simple rings (a ring $R$ that is not a
division ring is \emph{purely infinite simple} if, for every $r\in R$, there exist $x,y\in R$
such that $xry=1$; see~\cite[\S1]{AraGoodearlPardo}).

\separate

It follows from Proposition~\ref{two-sided completely primes} that we can omit
the modifiers ``left'' and ``right'' when referring to two-sided ideals that
are completely prime. The same result shows that the completely prime right ideals
can often be ``sparse'' among two-sided ideals in  noncommutative rings. For
example, there exist many rings that have no completely prime ideals, such as
simple rings that are not domains. Also, it is noteworthy that a prime ideal
$P\lhd R$ of a noncommutative ring is not necessarily a completely prime right
ideal. Thus completely prime right ideals generalize the notion of prime ideals
in commutative rings in a markedly different way than the more familiar two-sided
prime ideals of noncommutative ring theory.
(For further evidence of this idea, see Proposition~\ref{all right ideals completely prime}.)
The point is that these two types of ``primes'' give insight into different facets
of a ring's structure, with completely prime right ideals giving a better picture
of the right-sided structure of a ring as argued throughout this paper.

Below are some alternative characterizations of completely prime right ideals
that help elucidate their nature. The \emph{idealizer} of a right ideal
$J_R\subseteq R$ is the subring of $R$ given by
\[
\idealizer_{R}\left( J\right) :=\left\{ x\in R:xJ\subseteq J\right\}.
\]
This is the largest subring of $R$ in which $J$ is a (two-sided) ideal. It
is a standard fact that $\End_R(R/J)\cong \idealizer_R(J)/J$.

\begin{proposition}
\label{completely prime characterization}
For a right ideal $P_{R}\subsetneq R$,
the following are equivalent:
\begin{itemize}
\item[\normalfont (1)] $P$ is completely prime;
\item[\normalfont (2)] For $a,b\in R$, $ab\in P$ and $a\in \idealizer_R(P)$ imply
either $a\in P$ or $b\in P$;
\item[\normalfont (3)] Any nonzero $f\in \End_R(R/P)$ is injective;
\item[\normalfont (4)] $E:=\End_R(R/P)$ is a domain and ${}_E(R/P)$ is torsionfree.
\end{itemize}
\end{proposition}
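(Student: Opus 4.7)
The plan is to prove the cycle (1) $\iff$ (2) $\iff$ (3) $\iff$ (4), leveraging the identification $\End_R(R/P) \cong \idealizer_R(P)/P$ in a crucial way.

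First, (1) $\iff$ (2) is essentially a restatement: the hypothesis ``$aP \subseteq P$'' in Definition~\ref{completely prime definition} is precisely the condition $a \in \idealizer_R(P)$, so no real work is needed here.

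Next, I would prove (2) $\iff$ (3) by making the standard isomorphism $E = \End_R(R/P) \cong \idealizer_R(P)/P$ explicit. Each $a \in \idealizer_R(P)$ yields a well-defined endomorphism $f_a \colon R/P \to R/P$ given by $f_a(\bar{r}) = \overline{ar}$ (the inclusion $aP \subseteq P$ is exactly what makes $f_a$ well-defined on cosets), and every $R$-linear endomorphism of $R/P$ arises this way. The key observations are that $f_a = 0$ iff $a \cdot 1 \in P$ iff $a \in P$, and $f_a$ fails to be injective iff there exists $b \in R \setminus P$ with $ab \in P$. Thus (3) ``every nonzero $f_a$ is injective'' translates to ``for every $a \in \idealizer_R(P) \setminus P$ and every $b \in R$, $ab \in P \Rightarrow b \in P$,'' which is precisely (2).

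Finally, I would handle (3) $\iff$ (4) directly inside $E$. For (3) $\Rightarrow$ (4): if $f, g \in E$ are nonzero, then $g$ is injective by (3), so picking any $x \in R/P$ with $g(x) \neq 0$, we get $f(g(x)) \neq 0$ (as $f$ is also injective), hence $fg \neq 0$; this shows $E$ is a domain. Torsionfreeness of ${}_E(R/P)$ says that for every nonzero $e \in E$ the map $x \mapsto ex$ on $R/P$ has trivial kernel, which is precisely (3). For (4) $\Rightarrow$ (3): since $E$ is a domain, every nonzero element of $E$ is a non-zero-divisor, and torsionfreeness of ${}_E(R/P)$ then says exactly that any nonzero $f \in E$ has trivial kernel as a map on $R/P$, i.e., is injective.

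There is no real obstacle here; the only subtle point is being careful about the left $E$-module structure on $R/P$ and verifying that the standard identification $\End_R(R/P) \cong \idealizer_R(P)/P$ matches up the condition in (2) with the injectivity condition in (3). Once that translation is in hand, the remaining implications are short manipulations inside the endomorphism ring.
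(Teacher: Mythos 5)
Your proposal is correct and follows essentially the same route as the paper: the paper's proof of (1)$\iff$(3) likewise identifies each nonzero $f\in\End_R(R/P)$ with left multiplication by some $x\in\idealizer_R(P)$ via $f(1+P)=x+P$ and uses $\ker f=(x^{-1}P)/P$, and its (3)$\iff$(4) argument is the same composition/torsionfree argument you give. The only cosmetic difference is that you route the equivalence through (2) by making the isomorphism $\End_R(R/P)\cong\idealizer_R(P)/P$ explicit, whereas the paper goes directly between (1) and (3) with the same computation.
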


\begin{proof}
Characterization (2) is merely a restatement of the definition given above for
completely prime right ideals, so we have (1)$\iff$(2).

(1)$\implies$(3): Let $P$ be a completely prime right ideal, and let
$0\neq f\in\End_R(R/P)$. Choose $x\in R$ such that $f(1+P) =x+P$. Because
$f\neq 0$, $x\notin P$. Also, because $f$ is an $R$-module homomorphism,
$(1+P)P=0$ implies that $(x+P)P=0$, or $xP\subseteq P$. Then because $P$ is
completely prime, $x^{-1}P=P$. But this gives $\ker f=(x^{-1}P)/P=0$, so $f$ is
injective as desired.

(3)$\implies$(1): Assume that any nonzero endomorphism of $R/P$ is injective.
Suppose $x,y\in R$ are such that $xP\subseteq P$ and $xy\in P$. Then there is
an endomorphism $f$ of $R/P$ given by $f(r+P)=xr+P$. If $x\notin P$ then
$f\neq 0$, making $f$ injective. Then $f(y+P)=xy+P=0+P$ implies that $y+P=0+P$,
so that $y\in P$. Hence $P$ is a completely prime right ideal.

(3)$\iff$(4): This equivalence is still true if we replace $R/P$ with any nonzero
module $M_R$. If $E=\End_R(M)$ is a domain and ${}_E M$ is torsionfree, then
it is clear that every nonzero endomorphism of $M$ is injective. Assume
conversely that all nonzero endomorphisms of $M$ are injective. Given
$f,g\in E\setminus\{0\}$ and $m\in M\setminus\{0\}$, injectivity of $g$ gives
$g(m)\neq 0$ and injectivity of $f$ gives $f(g(m))\neq 0$. In particular
$fg\neq 0$, proving that $E$ is a domain. Because $g$ and $m$ above were
arbitrary, we conclude that $M$ is a torsionfree left $E$-module.
\end{proof}

(As a side note, we mention that modules for which every nonzero endomorphism
is injective have been studied by A.\,K.~Tiwary and B.\,M.~Pandeya in~\cite{TiwaryPandeya}.
In~\cite{Xue}, W.~Xue investigated the dual notion of a module for which every
nonzero endomorphism is surjective.  These were respectively referred to as modules with
the properties $(*)$ and $(**)$. In our proof of (3)$\iff$(4) above,
we showed that a module $M_R$ satisfies $(*)$ iff $E:=\End_R(M)$ is a domain
and ${}_E M$ is torsionfree. One can also prove the dual statement that
$M\neq 0$ satisfies $(**)$ iff $E$ is a domain and ${}_E M$ is divisible.)

One consequence of characterization~(3) above is that the property of being
completely prime depends only on the quotient module $R/P$. (Using
language to be introduced in Definition~\ref{similar definition}, the set of
all completely prime right ideals in $R$ is closed under similarity.) It is
a straightforward consequence of~(2) above that {\em a completely prime right
ideal $P$ is a completely prime ideal in the subring $\idealizer_R(P)$}.
This is further evidenced from~(4) because $\idealizer_R(P)/P\cong \End_R(R/P)$ is a domain.

One might wonder whether the torsionfree requirement in condition~(4) above
is necessary. That is, can a cyclic module $C_R$ have endomorphism ring $E$ which
is a domain but with ${}_E C$ not torsionfree? The next example shows that this is indeed
the case.

\begin{example}
For an integer $n>1$, consider the following ring and right ideal:
\[
R := 
\begin{pmatrix}
\mathbb{Z} & \mathbb{Z}/(n)\\
0 & \mathbb{Z}
\end{pmatrix}
\supseteq J_R := 
\begin{pmatrix}
0 & 0\\
0 & \mathbb{Z}
\end{pmatrix}.
\]
It is easy to show that $\idealizer_R(J)=\left(\begin{smallmatrix}\mathbb{Z}&0\\0&\mathbb{Z}\end{smallmatrix}\right)$,
so that $E:=\End_R(R/J)\cong\idealizer_R(J)/J\cong\mathbb{Z}$ is a domain acting
on $R/J\cong\left(\mathbb{Z},\ \mathbb{Z}/(n)\right)_R$
by (left) multiplication. But ${}_E(R/J)$ has nonzero torsion submodule isomorphic to
$\left(0,\ \mathbb{Z}/(n)\right)_R$.
\end{example}

For a prime ideal $P$ in a commutative ring $R$, the factor module $R/P$ is
indecomposable (i.e., has no nontrivial direct summand).  This property persists for completely
prime right ideals.

\begin{corollary}
\label{indecomposable lemma}
If $P$ is a completely prime right ideal of $R$, then the right $R$-module
$R/P$ is indecomposable.
\end{corollary}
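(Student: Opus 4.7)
The plan is to derive indecomposability from the endomorphism ring description in Proposition~\ref{completely prime characterization}. Assume for contradiction that $R/P = A \oplus B$ with $A, B$ both nonzero submodules. Then the projection $e \colon R/P \to R/P$ onto $A$ along $B$ is a nonzero idempotent in $E := \End_R(R/P)$ which is also distinct from $\id_{R/P}$ (since $B \neq 0$).

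Now invoke characterization (4) of Proposition~\ref{completely prime characterization}: since $P$ is completely prime, $E$ is a domain. In any domain, $e^2 = e$ forces $e(e-1) = 0$, hence $e = 0$ or $e = 1$, contradicting that $e$ is a nontrivial idempotent. Therefore no such decomposition exists.

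Alternatively, one could argue directly via characterization (3): the projection $e$ is a nonzero endomorphism of $R/P$ whose kernel contains $B \neq 0$, so $e$ is not injective, again a contradiction. Either route is a one-line deduction from Proposition~\ref{completely prime characterization}, so there is essentially no obstacle; the content of the corollary lies entirely in the characterizations already established.
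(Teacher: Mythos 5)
Your proof is correct and takes essentially the same route as the paper: the paper likewise observes that $E=\End_R(R/P)$ is a domain by Proposition~\ref{completely prime characterization} and hence has no nontrivial idempotents, so $R/P$ is indecomposable. Your alternative via characterization~(3) is a fine minor variation but not a genuinely different argument.
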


\begin{proof}
By Proposition~\ref{completely prime characterization} the ring $E:=\End_R(R/P)$
is a domain. Thus $E$ has no nontrivial idempotents, proving that $R/P$ is
indecomposable.
\end{proof}

For a commutative ring $R$ and $P\in\Spec(R)$, the module $R/P$ is not only
indecomposable, it is uniform. (A module $U_R\neq 0$ is \emph{uniform} if every pair of
nonzero submodules of $U$ has nonzero intersection.) However, the following example
shows that for a completely prime right ideal $P$ in a general ring $R$, $R/P$ need not
be uniform as a right $R$-module.

\begin{example}
\label{completely prime not meet-irreducible}
Let $k$ be a division ring and let $R$ be the following subring of
$\mathbb{M}_3(k)$:
\[
R =
\begin{pmatrix}
k & k & k\\
0 & k & 0\\
0 & 0 & k
\end{pmatrix}
.
\]
Notice that $R$ has precisely three simple right modules (since the
same is true modulo its Jacobson radical), namely $S_i=k$ ($i=1,2,3$)
with right $R$-action given by multiplication by the $(i,i)$-entry of
any matrix in $R$.

Let $P\subseteq R$ be the right ideal consisting of matrices in $R$
whose first row is zero. Notice that $R/P$ is isomorphic to the module
$V:=(k\ k\ k)$ of row vectors over $k$ with the usual right $R$-action.
Then $V$ has unique maximal submodule $M=(0\ k\ k)$, and $M$ in turn
has precisely two nonzero submodules, $U=(0\ k\ 0)$ and $W=(0\ 0\ k)$.
Having cataloged all submodules of $V$, let us list the composition factors:
\[
V/M \cong S_1, \quad M/W \cong U \cong S_2, \quad M/U \cong W \cong S_3.
\]
Notice that $S_1$ occurs as a composition factor of every nonzero factor
module of $V$, and it does not occur as a composition factor of any
proper submodule of $V$. Thus every nonzero endomorphism of $V\cong R/P$
is injective, proving that $P$ is a completely prime right ideal. However,
$V$ contains the direct sum $U\oplus W$, so $R/P\cong V$ is not uniform.
\end{example}

\separate

Proposition~\ref{two-sided completely primes} shows how frequently
completely prime right ideals occur among two-sided ideals. The next few
results give us further insight into how many completely prime right ideals exist
in a general ring. The first result gives a sufficient condition for a right ideal
to be completely prime. Recall that a module is said to be \emph{cohopfian}
if all of its injective endomorphisms are automorphisms. For example, it is
straightforward to show that any artinian module is cohopfian (see~\cite[Ex.~4.16]{ExercisesClassical}).
The following is easily proved using Proposition~\ref{completely prime characterization}(3).

\begin{proposition}
\label{endomorphisms division ring}
If a right ideal $P\subseteq R$ is such that $E:=\End_R(R/P)$ is a division
ring, then $P$ is completely prime. The converse holds if $R/P$ is cohopfian. 
\end{proposition}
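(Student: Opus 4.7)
My plan is to derive both directions of the proposition directly from characterization~(3) of Proposition~\ref{completely prime characterization}, which says that $P$ is completely prime if and only if every nonzero $f \in \End_R(R/P)$ is injective.

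For the forward direction, suppose $E := \End_R(R/P)$ is a division ring. Then every nonzero element of $E$ is a unit in $E$; in particular, any nonzero $f \in E$ has a two-sided inverse as an $R$-module endomorphism, which forces $f$ to be bijective and hence injective. Applying Proposition~\ref{completely prime characterization}(3) then yields that $P$ is completely prime.

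For the converse, assume $P$ is completely prime and $R/P$ is cohopfian. By Proposition~\ref{completely prime characterization}(3), every nonzero $f \in E$ is injective. The cohopfian hypothesis says that every injective endomorphism of $R/P$ is an automorphism, i.e., an invertible element of the endomorphism ring $E$. Combining these two facts, every nonzero element of $E$ is invertible in $E$, so $E$ is a division ring.

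There is essentially no obstacle here: the proposition is almost immediate from Proposition~\ref{completely prime characterization}(3) together with the definition of cohopfian. The only subtlety worth noting is that without the cohopfian hypothesis, injectivity of nonzero endomorphisms does not guarantee surjectivity, so $E$ need only be a domain in general, as already observed in characterization~(4) of Proposition~\ref{completely prime characterization}; the cohopfian condition is precisely what upgrades this domain to a division ring.
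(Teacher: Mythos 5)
Your proof is correct and is exactly the argument the paper intends: the paper states the proposition is ``easily proved using Proposition~\ref{completely prime characterization}(3),'' and your two directions (units are injective; completely prime plus cohopfian makes every nonzero endomorphism an automorphism) fill in precisely that reasoning. Your closing remark correctly identifies the role of the cohopfian hypothesis as upgrading the domain of characterization~(4) to a division ring.
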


\begin{corollary}
\label{maximal right ideal corollary}
\textnormal{(A)} A maximal right ideal $\m_R\subseteq R$ is a completely prime
right ideal.

\noindent\textnormal{(B)} For a right ideal $P$ in a right artinian ring $R$,
the following are equivalent:
\begin{itemize}
\item[\textnormal (1)] $P$ is a completely prime right ideal;
\item[\textnormal (2)] $\End_R(R/P)$ is a division ring;
\item[\textnormal (3)] $P$ is a maximal right (equivalently, maximal left) ideal in
its idealizer $\idealizer_R(P)$.
\end{itemize}
\end{corollary}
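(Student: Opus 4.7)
The plan is to deduce both parts from Proposition~\ref{completely prime characterization} and Proposition~\ref{endomorphisms division ring}, together with one standard fact about quotients by two-sided ideals.

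For part~(A), if $\m$ is a maximal right ideal of $R$, then $R/\m$ is a simple right $R$-module, so Schur's Lemma gives that $\End_R(R/\m)$ is a division ring.  The first assertion of Proposition~\ref{endomorphisms division ring} then yields that $\m$ is completely prime.

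For part~(B), I would first observe that since $R$ is right artinian, the cyclic module $R/P$ is artinian and hence cohopfian.  This makes the converse direction of Proposition~\ref{endomorphisms division ring} available, so both directions connect the condition ``$E:=\End_R(R/P)$ is a division ring'' with ``$P$ is completely prime,'' giving (1)$\iff$(2).  For (2)$\iff$(3), I would invoke the standard isomorphism $E\cong \idealizer_R(P)/P$ mentioned just before Proposition~\ref{completely prime characterization}.  Writing $S:=\idealizer_R(P)$, the right ideal $P$ is a two-sided ideal of $S$, so right ideals of $S/P$ correspond to right ideals of $S$ that contain $P$; hence $P$ is a maximal right ideal of $S$ iff $S/P$ has no proper nonzero right ideals iff $S/P$ is a division ring.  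The same argument on the left gives the parenthetical equivalence between maximal right and maximal left ideals in the idealizer, since both are equivalent to $S/P\cong E$ being a division ring.

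There is essentially no hard step here; the only thing to take care of is to invoke the cohopfian hypothesis correctly to activate the nontrivial direction of Proposition~\ref{endomorphisms division ring}.  The rest is a direct assembly of previously established material, with the fact that a ring with no proper nonzero one-sided ideals is a division ring handling the passage between maximality and the division-ring condition.
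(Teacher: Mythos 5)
Your proposal is correct and follows essentially the same route as the paper: part (A) via Schur's Lemma plus Proposition~\ref{endomorphisms division ring}, (1)$\iff$(2) via the cohopfian property of the artinian module $R/P$, and (2)$\iff$(3) via the isomorphism $\End_R(R/P)\cong\idealizer_R(P)/P$. Your spelled-out argument that maximality of $P$ in $\idealizer_R(P)$ amounts to $\idealizer_R(P)/P$ being a division ring is exactly the ``follows easily'' step the paper leaves to the reader.
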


\begin{proof}
Part~(A) follows from Schur's Lemma and Proposition~\ref{endomorphisms division ring}.
For part~(B), (1)$\iff$(2) follows from Proposition~\ref{endomorphisms division ring}
(every cyclic right $R$-module is artinian, hence cohopfian), and (2)$\iff$(3) follows easily
from the canonical isomorphism $\idealizer_R(P)/P\cong\End_R(R/P)$.
\end{proof}

Because every nonzero ring has a maximal right ideal (by a familiar Zorn's
lemma argument), part~(A) above applies to show that a nonzero ring always
has a completely prime right ideal. (This fact was already mentioned at the
beginning of this section.)
The same cannot be said for completely prime two-sided ideals or the aforementioned
``extremely prime'' right ideals!
On the other hand, Example~\ref{completely prime not meet-irreducible}
shows that a completely prime right ideal in an artinian ring need not be maximal,
so we cannot hope to strengthen part~(B) very drastically.

To get another indication of the role of completely prime right ideals, we may ask
the following natural question: when is every proper
right ideal of a ring completely prime? It is straightforward to verify that a
commutative ring in which every proper ideal is prime must be a field. 
On the other hand, there exist nonsimple noncommutative rings in which every proper
ideal is prime. (For example, take $R=\End(V_k)$ where $V$ is a right vector
space of dimension at least $\aleph_0$ over a division ring $k$;
see~\cite[Ex. 10.6]{ExercisesClassical}).
In contrast, the behavior of completely prime right ideals is much closer to that
of prime ideals in the commutative case.

\begin{proposition}
\label{all right ideals completely prime}
For a nonzero ring $R$, every proper right ideal in $R$ is completely prime
iff $R$ is a division ring.
\end{proposition}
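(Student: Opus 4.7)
The plan is to handle the easy direction first and then focus on the substantive converse.

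\textbf{Easy direction.} If $R$ is a division ring, then $(0)$ is the only proper right ideal of $R$. Since a division ring has no zero divisors, the condition ``$ab\in(0)$ implies $a\in(0)$ or $b\in(0)$'' is immediate, and the hypothesis $aP\subseteq P$ is vacuous for $P=(0)$. So $(0)$ is completely prime.

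\textbf{Hard direction.} Assume every proper right ideal of $R$ is completely prime. First, applying this to $P=(0)$ (and noting $aP\subseteq P$ holds trivially) shows that $ab=0$ forces $a=0$ or $b=0$; hence $R$ is a domain. To prove $R$ is a division ring, I need to show that every nonzero $a\in R$ is a unit. The strategy is to produce a right inverse for $a$ and then upgrade it to a two-sided inverse using the fact that $R$ is a domain.

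The key trick is to apply the completely prime hypothesis not to $aR$ (which gives no information, since $a\in aR$ trivially) but to $P:=a^{2}R$. Split into two cases:
\begin{itemize}
\item If $a^{2}R=R$, then $a^{2}b=1$ for some $b$, so $ab$ is a right inverse of $a$.
\item If $a^{2}R\subsetneq R$, then by hypothesis $P=a^{2}R$ is completely prime. Check that $aP=a^{3}R\subseteq a^{2}R=P$ and that $a\cdot a=a^{2}\in P$. Complete primality then forces $a\in P=a^{2}R$, i.e.\ $a=a^{2}r$ for some $r\in R$. Thus $a(1-ar)=0$, and since $R$ is a domain with $a\neq 0$ we conclude $ar=1$, so $r$ is a right inverse of $a$.
\end{itemize}
In either case $a$ has a right inverse, say $ar=1$. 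Then $a(ra-1)=ara-a=a-a=0$, and since $R$ is a domain with $a\neq 0$, we get $ra=1$. Thus $a$ is a unit, and $R$ is a division ring.

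The main conceptual point—rather than a technical obstacle—is realizing that $aR$ is useless because $a$ already lies in it, whereas $a^{2}R$ is exactly the right choice: it puts $a$ into the role of the ``witness'' element $b$ in the definition of complete primality, and the domain hypothesis (deduced from $(0)$ being completely prime) handles the passage from a right inverse to a two-sided inverse.
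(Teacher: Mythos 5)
Your proof is correct and follows essentially the same route as the paper: both arguments apply complete primality to $a^{2}R$ (noting $a$ idealizes it and $a\cdot a\in a^{2}R$) to get $a\in a^{2}R$, and both use the fact that $(0)$ completely prime makes $R$ a domain in order to cancel and conclude $a$ is (right) invertible. The only cosmetic difference is that the paper runs the argument as a contradiction with $aR\subsetneq R$, while you case-split on $a^{2}R=R$ and then explicitly upgrade the right inverse to a two-sided one.
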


\begin{proof}
(``Only if'') For an arbitrary $0\neq a\in R$, it suffices to show that
$a$ is right invertible. Assume for contradiction that $aR\neq R$. Then
the right ideal $J=a^2R\subseteq aR\subsetneq R$ is proper and hence is
completely prime. Certainly $a\in \idealizer_R (J)$. Because $a^2\in J$,
we must have $a\in J$ (recall Proposition~\ref{completely prime characterization}(2)).
But also the ideal $0\neq R$ is completely prime, so $R$ is a domain by
Proposition~\ref{two-sided completely primes}. Then $a\in J=a^2R$ implies
$1\in aR$, contradicting that $aR\neq R$.
\end{proof}

An inspection of the proof above actually shows that a nonzero ring $R$
is a division ring iff the endomorphism ring of every nonzero cyclic right
$R$-module is a domain, iff $R$ is a domain and the endomorphism ring of
every cyclic right $R$-module is reduced.
We mention here that certain other ``prime right ideals'' studied previously
do not enjoy the property proved above. For instance, K.~Koh showed~\cite[Thm.~4.2]{Koh}
that all proper right ideals $I$ of a ring $R$ satisfy
\[
(aRb \in I \implies a \in I \text{ or } b \in I) \quad \text{for all } a,b\in R
\]
precisely when $R$ is simple. In this sense, a ring  may have ``too many''
of these prime-like right ideals.

\separate

The following is an analogue of the theorem describing $\Spec(R/I)$ for
a commutative ring $R$ and an ideal $I\lhd R$. We present the result in a
more general context than that of completely prime right ideals because it
will be applicable to other types of ``prime right ideals'' that we will
consider later.

\begin{remark}
\label{spectrum correspondence}

Let $\mathcal{P}$ be a module-theoretic property such that, if $V_R$ is a
module and $I$ is an ideal of $R$ contained in $\ann(V)$, then $V$ satisfies
$\mathcal{P}$ as an $R$-module iff it satisfies $\mathcal{P}$ when
considered as a module over $R/I$. For every ring $R$ let $\setS(R)$ denote
the set of all right ideals $P_R\subseteq R$ such that $R/P$ satisfies
$\mathcal{P}$. Then it follows directly from our assumption on the property
$\mathcal{P}$ that there is a one-to-one correspondence
\[
\{ P_R \in \setS(R) : P \supseteq I \} \longleftrightarrow \setS(R/I)
\]
given by $P\leftrightarrow P/I$.

In particular, we may take $\mathcal{P}$ to be the property ``$V\neq 0$
and every nonzero endomorphism of $V$ is injective.'' Then the
associated set $\setS(R)$ is the collection of all completely prime right
ideals of $R$, according to characterization~(3) of
Proposition~\ref{completely prime characterization}. In this case we conclude
that \emph{for any ideal $I\lhd R$ the completely prime right ideals
of $R/I$ correspond bijectively, in the natural way, to the set of completely
prime right ideals of $R$ containing $I$}.
\end{remark}

In \S\S\ref{CPIP section}--\ref{applications section} we will take a
much closer look at the existence of completely prime right ideals in rings.
To close this section, we explore how completely prime right ideals behave
when ``pulled back'' along ring homomorphisms.
One can interpret Remark~\ref{spectrum correspondence} as demonstrating that,
under a surjective ring homomorphism $f\colon R\to S$, the preimage of any
completely prime right ideal of $S$ is a completely prime right ideal of $R$.
The next example demonstrates that this does not hold for arbitrary ring
homomorphisms.

\begin{example}
\label{completely prime not functorial}
For a division ring $k$, let $S:=\mathbb{M}_3(k)$ and let $R$ be the subring
of $S$ defined in Example~\ref{completely prime not meet-irreducible}.
Consider the right ideals
\[
Q_S := \left\{
\begin{pmatrix}
a & b & c\\
d & e & f\\
d & e & f
\end{pmatrix}
\right\} \subseteq S \quad \text{and} \quad P_R :=
\begin{pmatrix}
k & k & k\\
0 & 0 & 0\\
0 & 0 & 0
\end{pmatrix}
\subseteq R.
\]
Because $Q_S\cong (k\ k\ k)_S^2$ has composition length~2, it is a
maximal right ideal of $S$ and thus is completely prime by
Corollary~\ref{maximal right ideal corollary}(A). Let $f\colon R\to S$
be the inclusion homomorphism. Then $P=Q\cap R=f^{-1}(Q)$. However
$(R/P)_R\cong (0\ k\ k)_R\cong (0\ k\ 0)_R \oplus (0\ 0\ k)_R$ is
decomposable, so Lemma~\ref{indecomposable lemma} shows that $P_R$
is not completely prime.
\end{example}

The above example may seem surprising to the reader who recalls that
completely prime (two-sided) ideals pull back along \emph{any} ring
homomorphism.  The tension between this fact and Example~\ref{completely prime not functorial}
is resolved in the following result.

\begin{proposition}
\label{when completely primes pull back}
Let $f: R\to S$ be a ring homomorphism, let $Q_S\subsetneq S$ be a
completely prime right ideal, and set $P_R:=f^{-1}(Q)$. If
$f(\idealizer_R(P))\subseteq\idealizer_S(Q)$, then $P$ is a completely
prime right ideal of $R$.
\end{proposition}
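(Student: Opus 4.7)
The plan is to verify the definition of a completely prime right ideal directly, leveraging the intermediate characterization given in Proposition~\ref{completely prime characterization}(2), which lets one restrict attention to elements $a$ in the idealizer of $P$. The hypothesis that $f(\idealizer_R(P))\subseteq \idealizer_S(Q)$ is precisely what is needed to transport this idealizer condition from $R$ to $S$, where the completely prime hypothesis on $Q$ can be invoked.

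First I would note that $P\subsetneq R$: since $Q\subsetneq S$ we have $1_S\notin Q$, so $1_R = f^{-1}(1_S) \notin f^{-1}(Q) = P$. Next, take $a,b\in R$ with $aP\subseteq P$ and $ab\in P$; the goal is to show $a\in P$ or $b\in P$. The assumption $aP\subseteq P$ says $a\in\idealizer_R(P)$, so by the given hypothesis $f(a)\in\idealizer_S(Q)$, i.e.\ $f(a)Q\subseteq Q$. Moreover $ab\in P$ gives $f(a)f(b)=f(ab)\in Q$.

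Now apply the completely prime condition on $Q$ in the form of Proposition~\ref{completely prime characterization}(2): since $f(a)\in\idealizer_S(Q)$ and $f(a)f(b)\in Q$, we conclude that either $f(a)\in Q$ or $f(b)\in Q$. Pulling back along $f$, this gives $a\in f^{-1}(Q)=P$ or $b\in P$, which is exactly what we need.

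There is no real obstacle here; the proposition is essentially a bookkeeping statement, and the only subtlety is noticing that the obstruction exhibited by Example~\ref{completely prime not functorial} occurs precisely when $f(\idealizer_R(P))$ fails to land in $\idealizer_S(Q)$—so that an element $a$ with $aP\subseteq P$ cannot be guaranteed to satisfy $f(a)Q\subseteq Q$. Once the idealizer hypothesis is in place, the argument is a one-step transfer via Proposition~\ref{completely prime characterization}(2).
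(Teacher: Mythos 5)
Your proof is correct and follows essentially the same route as the paper: check properness of $P$, then transfer the condition $a\in\idealizer_R(P)$, $ab\in P$ through $f$ using the hypothesis $f(\idealizer_R(P))\subseteq\idealizer_S(Q)$, and invoke the completely prime property of $Q$ via Proposition~\ref{completely prime characterization}(2) before pulling back along $f^{-1}$. (Only a cosmetic quibble: write $f(1_R)=1_S\notin Q$, hence $1_R\notin f^{-1}(Q)$, rather than $1_R=f^{-1}(1_S)$.)
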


\begin{proof}
Because $Q$ is a proper right ideal of $S$, $P$ must be a proper right
ideal of $R$. Suppose that $a\in\idealizer_R(P)$ and $b\in R$ are such that
$ab\in P$. Then $f(a)f(b)=f(ab)\in f(P)\subseteq Q$ with
$f(a)\in f(\idealizer_R(P))\subseteq\idealizer_S(Q)$. Because $Q_S$ is
completely prime, this means that one of $f(a)$ or $f(b)$ lies in $Q$. Hence
one of $a$ or $b$ lies in $f^{-1}(Q)=P$ and $P$ is completely prime.
\end{proof}

This simultaneously explains our two positive examples of when a completely
prime right ideal pulls back along a ring homomorphism. When $Q_S\subseteq S$
above is a two-sided ideal, $\idealizer_S(Q)=S$ and the condition
$f(\idealizer_R(P))\subseteq \idealizer_S(Q)$ is trivially satisfied. On
the other hand, if $Q\subseteq \im(f)$ (e.g.\ if $f$ is surjective), then
one can use the fact that $f(P)=Q$ to show that
$f(\idealizer_R(P))\subseteq \idealizer_S(Q)$ again holds.

\section{The Completely Prime Ideal Principle}
\label{CPIP section}

A distinct advantage that completely prime right ideals have over earlier
notions of ``one-sided primes'' is a theorem assuring the existence of
completely prime right ideals in a wide array of situations.
It states that right ideals that are ``maximal'' in certain senses must be
completely prime.
This result is the \emph{Completely Prime Ideal Principle}, or \emph{CPIP},
and it is presented in Theorem~\ref{CPIP} below.

A number of famous theorems from commutative algebra state that an ideal maximal
with respect to a certain property must be prime. A useful perspective from which
to study this phenomenon is to consider a family $\F$ of ideals in a commutative
ring $R$ and ask when an ideal maximal in the \emph{complement} $\F'$ of $\F$ is
prime. Some well-known examples of such $\F$ include the family of ideals intersecting
a fixed multiplicative set $S\subseteq R$, the family of finitely generated
ideals, the family of principal ideals, and the family of ideals that do not
annihilate any nonzero element of a fixed module $M_R$. 
In~\cite{LR} an \emph{Oka family of ideals} in a commutative ring $R$ was defined
to be a set $\F$ of ideals of $R$ with $R\in\F$ such that, for any ideal
$I\lhd R$ and element $a\in R$,  $I+(a)\in\F$ and $(I:a)\in\F$ imply $I\in\F$. The
\emph{Prime Ideal Principle} (or PIP)~\cite[Thm.~2.4]{LR} states that for any
Oka family $\F$, an ideal maximal in the complement of $\F$ is prime (in short,
$\Max(\F')\subseteq\Spec(R)$). In~\cite[\S 3]{LR} it was shown that many of the ``maximal
implies prime'' results in commutative algebra (including those mentioned above) follow
directly from the Prime Ideal Principle. 

The following notion generalizes Oka families to the noncommutative setting.

\begin{definition}
\label{Oka family definition}
Let $R$ be a ring. An \emph{Oka family of right ideals} (or
\emph{right Oka family}) in $R$ is a family $\F$ of right ideals
with $R\in \F$ such that, given any $I_R\subseteq R$
and $a\in R$,
\begin{equation}
\label{Oka property}
I+aR,\ a^{-1}I\in \F\implies I\in \F.
\end{equation}
\end{definition}

If $R$ is commutative, notice that this coincides with the definition of an
Oka family of ideals in $R$, given in~\cite[Def.~2.1]{LR}. When verifying that
some set $\F$ is a right Oka family, we will often omit the step of showing
that $R\in\F$ if this is straightforward.

\begin{remark}
\label{complete lattice}
The fact that this definition is given in terms of the ``closure
property''~\eqref{Oka property} makes it clear that the collection of Oka
families of right ideals in a ring $R$ is closed under arbitrary intersections.
Thus the set of right Oka families of $R$ forms a complete lattice under
the containment relation.
\end{remark}

Without delay, let us prove the noncommutative analogue of the Prime Ideal
Principle~\cite[Thm.~2.4]{LR}, the Completely Prime Ideal Principle (CPIP).

\begin{theorem}[Completely Prime Ideal Principle]
\label{CPIP}
Let $\F$ be an Oka family of right ideals in a ring $R$. Then
every $I\in \Max(\F ')$ is a completely prime right ideal.
\end{theorem}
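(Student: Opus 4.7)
The plan is to prove the contrapositive: assuming $I \in \Max(\F')$ fails to be completely prime, derive $I \in \F$ to contradict $I \in \F'$. The first step is to note that $R \in \F$ gives $I \subsetneq R$ (a proper right ideal), so the only obstruction to being completely prime is the failure of the implication in Definition~\ref{completely prime definition}. Use the equivalent ``right-division'' form of the definition: $I$ is completely prime iff for every $a \in R$ with $aI \subseteq I$ and $a \notin I$, we have $a^{-1}I = I$.

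Assume for contradiction that there exists $a \in R$ with $aI \subseteq I$, $a \notin I$, and $a^{-1}I \neq I$. First, since $aI \subseteq I$, every $r \in I$ satisfies $ar \in I$, so $r \in a^{-1}I$; this gives $I \subseteq a^{-1}I$. Combined with $a^{-1}I \neq I$, we obtain $a^{-1}I \supsetneq I$. Second, since $a \notin I$ but $a = 0 + a \cdot 1 \in I + aR$, we have $I + aR \supsetneq I$.

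Now invoke maximality of $I$ in $\F'$: any right ideal strictly containing $I$ must lie outside $\F'$, i.e., inside $\F$. Applying this to the two strict containments above, both $I + aR \in \F$ and $a^{-1}I \in \F$. The closure property~\eqref{Oka property} of the right Oka family $\F$, applied to the pair $(I, a)$, then forces $I \in \F$, contradicting $I \in \F'$.

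The proof is essentially a two-line application of the definitions, so there is no real obstacle; the only thing to be careful about is choosing the correct form of ``completely prime'' to match the shape of the Oka closure condition. In particular, the hypothesis $aI \subseteq I$ is exactly what guarantees $I \subseteq a^{-1}I$ (so that a strict inequality $a^{-1}I \neq I$ automatically produces a proper enlargement of $I$), and the hypothesis $a \notin I$ is exactly what guarantees $I + aR \supsetneq I$. These two enlargements are the two inputs that the Oka property requires, and the symmetry between the definition of a right Oka family and the definition of a completely prime right ideal is what makes the CPIP essentially tautological once set up correctly.
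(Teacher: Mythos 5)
Your proof is correct and is essentially the paper's own argument: both derive the two strict containments $I \subsetneq I+aR$ and $I \subsetneq a^{-1}I$ from the failure of complete primeness, invoke maximality of $I$ in $\F'$ to place both in $\F$, and then apply the Oka property to reach the contradiction $I \in \F$. The only cosmetic difference is that you use the ``right-division'' form of Definition~\ref{completely prime definition} directly, while the paper phrases the failure via elements $a,b \notin I$ with $ab \in I$.
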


\begin{proof}
Let $I\in \Max(\F ')$. Notice that $I\neq R$ since $R\in\F$. Assume for
contradiction that there exist $a,b\in R\setminus I$ such that $aI\subseteq I$
and $ab\in I$. Because $a\notin I$ we have $I\subsetneq I+aR$. Additionally
$I\subseteq a^{-1}I$, and $b\in a^{-1}I$ implies $I\subsetneq a^{-1}I$. Since
$I\in\Max(\F')$ we find that $I+aR,\ a^{-1}I \in\F$. Because $\F$ is a right
Oka family we must have $I\in\F$, a contradiction.
\end{proof}

In the original setting of Oka families in commutative rings, a result called
the ``Prime Ideal Principle Supplement''~\cite[Thm.~2.6]{LR} was used to
recover results such as Cohen's Theorem~\cite[Thm.~2]{Cohen} that a commutative
ring $R$ is noetherian iff its prime ideals are all finitely generated.
As with the Completely Prime Ideal Principle above, there is a direct
generalization of this fact for noncommutative rings.
The idea of this result is that for \emph{certain} right Oka families
$\F$, in order to test whether $\F$ contains all right ideals of $R$, it is
sufficient to test only the completely prime right ideals.
We first define the one-sided version of a concept introduced in~\cite{LR}.

\begin{definition}
\label{semifilter definition}
A \emph{semifilter of right ideals} in a ring $R$ is a family $\F$ of right
ideals such, for all right ideals $I$ and $J$ of $R$, if $I\in \F$ and $J\supseteq I$
then $J\in \F$.
\end{definition}

\begin{theorem}[Completely Prime Ideal Principle Supplement]
\label{CPIP supplement}
Let $\F$ be a right Oka family in a ring $R$ such that every nonempty chain
of right ideals in $\F'$ (with respect to inclusion) has an upper bound in $\F'$
(for example, if every right ideal in $\F$ is f.g.). Let $\setS$ denote the
set of completely prime right ideals of $R$.
\begin{itemize}
\item[\normalfont (1)] Let $\F_{0}$ be a semifilter of right ideals in $R$. If
$\setS\cap\F_{0}\subseteq\F$, then $\F_{0}\subseteq \F$.
\item[\normalfont (2)] For $J_{R}\subseteq R$, if all right ideals in $\setS$
containing $J$ (resp.\ properly containing $J$) belong to $\F$, then all right
ideals containing $J$ (resp.\ properly containing $J$) belong to $\F$.
\item[\normalfont (3)] If $\setS\subseteq\F$, then all right ideals of $R$ belong
to $\F$.
\end{itemize}
\end{theorem}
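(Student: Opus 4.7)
The plan is to prove part~(1) first as the main technical step, then derive~(2) and~(3) as immediate consequences. Throughout, the chain-bound hypothesis on $\F'$ is exactly what allows a Zorn's lemma argument inside $\F'$, and the CPIP (Theorem~\ref{CPIP}) then supplies completely prime right ideals as maximal members of $\F'$.

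For~(1), I would argue by contradiction. Suppose $\F_0 \not\subseteq \F$, so there exists some $I \in \F_0 \cap \F'$. Form the subcollection $\mathcal{C} := \{K_R \subseteq R : K \supseteq I \text{ and } K \in \F'\}$. It is nonempty (it contains $I$), and any chain in $\mathcal{C}$ is a chain in $\F'$, hence admits an upper bound in $\F'$ by hypothesis; that upper bound automatically contains $I$, so it lies in $\mathcal{C}$. Zorn's lemma produces a maximal element $M$ of $\mathcal{C}$. In fact $M \in \Max(\F')$, since any $K \in \F'$ with $K \supsetneq M$ would again contain $I$ and contradict the maximality of $M$ in $\mathcal{C}$. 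By the CPIP, $M$ is completely prime, so $M \in \setS$. Because $\F_0$ is a semifilter and $M \supseteq I \in \F_0$, we have $M \in \F_0$. Therefore $M \in \setS \cap \F_0 \subseteq \F$, contradicting $M \in \F'$.

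For~(2), apply~(1) to the semifilters $\F_0 := \{K_R \subseteq R : K \supseteq J\}$ and $\F_0 := \{K_R \subseteq R : K \supsetneq J\}$, which are evidently semifilters. For~(3), take $J = 0$ in the non-strict case of~(2), or equivalently apply~(1) to the semifilter of all right ideals of $R$.

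The main obstacle is merely organizing the Zorn step inside $\F'$; once $\Max(\F')$ can be populated above any prescribed ideal, the CPIP does all the work. As for the parenthetical case that every right ideal in $\F$ is finitely generated: a chain in $\F'$ then has its union as an upper bound in $\F'$, since if the union lay in $\F$ it would be f.g., and its finitely many generators would already sit in a single member of the chain, forcing that member to equal the union and hence to belong to $\F$, a contradiction.
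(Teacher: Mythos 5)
Your proposal is correct and follows essentially the same route as the paper: a Zorn's lemma argument inside $\F'$ above the offending right ideal $I$, followed by the CPIP to conclude the maximal element is completely prime, the semifilter property to place it in $\F_0$, and then the contradiction with $\setS\cap\F_0\subseteq\F$; parts (2) and (3) are obtained from (1) with the same choices of semifilter. Your added verification of the finitely generated case and the explicit check that maximality in $\mathcal{C}$ gives membership in $\Max(\F')$ simply spell out steps the paper leaves implicit.
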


\begin{proof}
For~(1), let $\F_0$ be a semifilter of right ideals and suppose that
$\setS\cap\F_0\subseteq\F$. Assume for contradiction that there exists a right
ideal $I\in\F\setminus\F_0=\F'\cap\F_0$. The hypothesis on $\F'$ allows us
to apply Zorn's lemma to find a right ideal $P\supseteq I$ with $P\in\Max(\F')$,
so $P\in\setS$ by the Completely Prime Ideal Principle~\ref{CPIP}.
Because $\F_0$ is a semifilter containing $I$, we also have $P\in\F_0$. It
follows that $P\in\setS\cap\F_0\setminus\F$, contradicting our hypothesis. 

Parts (2) and (3) follow from~(1) by taking the semifilter $\F_0$ to be,
respectively, the set of all right ideals containing $J$, the set of all
right ideals properly containing $J$, or the set of all right ideals of $R$.
\end{proof}

\separate

We will largely refrain from applying the Completely Prime Ideal Principle
and its Supplement until \S\ref{applications section}, when we will have
enough tools to efficiently construct right Oka families.
However, it seems appropriate to at least give one classical application
to showcase these ideas at work. In~\cite[Appendix]{Nagata}, M.~Nagata gave
a simple proof of Cohen's Theorem~\ref{original Cohen's Theorem} using the
following lemma: if an ideal $I$ and an element
$a$ of a commutative ring $R$ are such that $I+(a)$ and $(I:a)$ are finitely
generated, then $I$ itself is finitely generated. This statement amounts to
saying that the family of finitely generated ideals in a commutative ring is
an Oka family. Nagata cited a paper~\cite{Oka} of K.~Oka as the inspiration
for this result. (In~\cite[p.~3007]{LR} it was pointed out that Oka's
Corollaire~2 is the relevant statement.) This was the reason for the use of
the term \emph{Oka family} in~\cite{LR}. More generally, in~\cite[Prop.~3.16]{LR}
it was shown that, for any infinite cardinal $\alpha $, the family of all
ideals generated by a set of cardinality~$<\alpha$ is Oka. 
The following generalizes this collection of results to the noncommutative
setting. We let $\mu(M)$ denote the smallest cardinal  $\mu$ such that the
module $M_{R}$ can be generated by a set of cardinality $\mu$.

\begin{proposition}
\label{f.g. right ideals}
Let $\alpha$ be an infinite cardinal, and let $\F_{<\alpha}$ be
the set of all right ideals $I_{R}\subseteq R$ with $\mu(I)
<\alpha$. Then $\F_{<\alpha }$ is a right Oka family, and any right ideal
maximal with respect to $\mu(I)\geq\alpha$ is completely prime. In particular,
the set of finitely generated right ideals is a right Oka family; hence a
right ideal maximal with respect to not being finitely generated is prime.
\end{proposition}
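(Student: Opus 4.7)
The plan is to verify the Oka property \eqref{Oka property} directly for $\F_{<\alpha}$ and then invoke the Completely Prime Ideal Principle. First, $R\in\F_{<\alpha}$ because $R$ is generated by $\{1\}$ and $1<\alpha$. So fix a right ideal $I_R\subseteq R$ and $a\in R$ with $I+aR,\ a^{-1}I\in\F_{<\alpha}$, and choose generating sets $\{z_j\}_{j\in J}$ for $I+aR$ and $\{r_k\}_{k\in K}$ for $a^{-1}I$, each of cardinality $<\alpha$. Since $z_j\in I+aR$, write $z_j=y_j+as_j$ with $y_j\in I$ and $s_j\in R$; also each $ar_k\in I$ by definition of $a^{-1}I$.

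The key claim is that $G:=\{y_j:j\in J\}\cup\{ar_k:k\in K\}$ generates $I$ as a right ideal. Given $x\in I\subseteq I+aR$, write $x=\sum_{j}z_jt_j$ for finitely many nonzero $t_j\in R$. Expanding,
\[
x=\sum_j y_j t_j + a\Bigl(\sum_j s_j t_j\Bigr).
\]
Set $u=\sum_j s_j t_j\in R$; then $au=x-\sum_j y_j t_j\in I$, so $u\in a^{-1}I$. Hence $u=\sum_k r_k v_k$ for finitely many $v_k\in R$, and substituting gives $x=\sum_j y_j t_j+\sum_k (ar_k)v_k$, proving $x$ lies in the right ideal generated by $G$. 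Since $\alpha$ is infinite, $|G|\leq |J|+|K|<\alpha$ (either one of $|J|,|K|$ is infinite, in which case the sum equals the maximum and is still $<\alpha$, or both are finite and the sum is finite $<\alpha$), so $\mu(I)<\alpha$ and $I\in\F_{<\alpha}$.

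Thus $\F_{<\alpha}$ is a right Oka family, and Theorem~\ref{CPIP} applied to $\F_{<\alpha}$ gives that every right ideal maximal with respect to $\mu(I)\geq\alpha$ is completely prime. The special case $\alpha=\aleph_0$ recovers the statement for finitely generated right ideals.

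There is no substantial obstacle here; the only thing that requires a moment of care is the cardinal arithmetic at the end, which uses that the cardinals $<\alpha$ are closed under pairwise sums for infinite $\alpha$. The heart of the argument is the $a$-adic-style decomposition $x=\sum y_j t_j+au$ that extracts a ``remainder'' element $u$ lying in $a^{-1}I$, which is precisely the mechanism that makes the Oka property natural in this noncommutative setting.
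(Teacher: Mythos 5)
Your proof is correct and follows essentially the same route as the paper: the paper chooses a right ideal $I_0\subseteq I$ with $\mu(I_0)<\alpha$ such that $I+aR=I_0+aR$ and observes $I=I_0+a(a^{-1}I)$, which is exactly your generator-level decomposition $x=\sum y_jt_j+\sum(ar_k)v_k$ packaged at the level of ideals, followed by the same cardinal arithmetic and the same appeal to the CPIP. No gaps; only note that in the final sentence ``prime'' should read ``completely prime,'' matching the statement's phrasing.
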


\begin{proof}
We first show that $\F_{<\alpha}$ is a right Oka family.
Let $I_R\subseteq R$, $a\in R$ be such that $I+aR,\ a^{-1}I\in \F
_{<\alpha }$. From $\mu(I+aR)<\alpha$ it is straightforward to
verify that there is a right ideal $I_0\subseteq I$ with $\mu(I_0)<\alpha$
such that $I+aR=I_0+aR$. It follows that $I=I_0+a(a^{-1}I)$. Because
$\mu(I_0)<\alpha$ and $\mu(a(a^{-1}I))\leq\mu(a^{-1}I)<\alpha$, we
see that $\mu(I)<\alpha+\alpha=\alpha$. Thus $I\in\F_{<\alpha}$, proving
that $\F_{< \alpha}$ is right Oka. 

If $I$ is a right ideal maximal with respect to $\mu(I)\geq\alpha$ then
$I\in\Max(\F')$ and the CPIP~\ref{CPIP} implies that $I$ is completely
prime. The last sentence follows when we take $\alpha = \aleph_0$.
\end{proof}

This leads to a noncommutative generalization of Cohen's
Theorem~\ref{original Cohen's Theorem} for completely prime right ideals.

\begin{theorem}[A noncommutative Cohen's Theorem]
\label{Cohen's Theorem}
A ring $R$ is right noetherian iff all of its completely prime right ideals are
finitely generated.
\end{theorem}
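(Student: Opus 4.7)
The plan is to deduce this result as a direct application of the machinery already developed, namely the Completely Prime Ideal Principle Supplement together with Proposition~\ref{f.g. right ideals}.

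The forward direction is immediate: if $R$ is right noetherian then every right ideal (in particular every completely prime right ideal) is finitely generated. So the content lies in the converse. Assume that every completely prime right ideal of $R$ is finitely generated, and let $\F$ denote the family of all finitely generated right ideals of $R$. By Proposition~\ref{f.g. right ideals} (the case $\alpha = \aleph_0$), $\F$ is a right Oka family. Moreover every member of $\F$ is by definition f.g., so the chain hypothesis of Theorem~\ref{CPIP supplement} is satisfied (this is the parenthetical sufficient condition in its statement).

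Letting $\setS$ denote the set of completely prime right ideals of $R$, our standing hypothesis is exactly $\setS \subseteq \F$. Applying Theorem~\ref{CPIP supplement}(3) then yields that every right ideal of $R$ belongs to $\F$, i.e., every right ideal of $R$ is finitely generated, so $R$ is right noetherian.

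There is no real obstacle here beyond checking that the two ingredients apply cleanly; the only point worth a moment's care is the chain condition in Theorem~\ref{CPIP supplement}, but this is built into the parenthetical clause once one notes that the union of an ascending chain of non-f.g.\ right ideals is itself non-f.g.\ (any finite generating set would lie in some member of the chain). Thus the theorem reduces entirely to combining Proposition~\ref{f.g. right ideals} with part~(3) of the Supplement.
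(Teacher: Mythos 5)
Your proof is correct and follows exactly the paper's own argument: the converse is obtained by applying Proposition~\ref{f.g. right ideals} (the case $\alpha=\aleph_0$) together with the CPIP Supplement~\ref{CPIP supplement}(3) to the family $\F=\F_{<\aleph_0}$, whose members are tautologically f.g., so the chain hypothesis holds. Your extra remark verifying the parenthetical chain condition is a fine (and accurate) elaboration of what the paper leaves implicit.
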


\begin{proof}
This follows from Proposition~\ref{f.g. right ideals} and the CPIP
Supplement~\ref{CPIP supplement}(3) applied to $\F=\F_{<\aleph_0}$, noticing
that $\F$ tautologically consists of f.g.\ right ideals.
\end{proof}

Notice how quickly the last two results were proved using the CPIP and its Supplement!
This highlights the utility of right Oka families as a framework
from which to study such problems. Of course, other generalizations of
Cohen's Theorem have been proven in the past. 
In a forthcoming paper~\cite{Reyes}, we will apply the methods of right Oka
families developed here to improve upon our generalization of Cohen's
Theorem. We will also develop noncommutative generalizations of the theorems
of Kaplansky which say that a commutative ring is a principal ideal ring iff its
prime ideals are principal, iff it is noetherian and its maximal ideals are
principal. To avoid repetition, we shall wait until~\cite{Reyes} to compare
our version of Cohen's Theorem to earlier generalizations in the literature.

The generalization of Cohen's Theorem~\ref{original Cohen's Theorem} in
Theorem~\ref{Cohen's Theorem} above does not hold if we replace the phrase
``completely prime'' with ``extremely prime''  (as defined in~\S\ref{completely prime section}).
Indeed, using Proposition~\ref{no extremely primes} we showed that there
exist rings $R$ that are not right noetherian with no extremely
prime right ideals. But for such $R$, it is vacuously true that every extremely
prime right ideal of $R$ is finitely generated! This strikingly illustrates the idea that
completely prime right ideals control the right-sided structure of a general ring
better than extremely prime right ideals.

For any cardinal $\beta$, we can also define a family
$\F_{\leq\beta}$ of all right ideals $I$ such that $\mu(I)\leq\beta$.
Letting $\beta^+$ denote the successor cardinal of $\beta$, we see
that $\F_{\leq \beta}=\F_{<\beta^+}$, so we have not sacrificed any
generality in the statement of Proposition~\ref{f.g. right ideals}. In particular,
taking $\beta=\aleph_0$ we see that the family of all countably
generated right ideals is a right Oka family.
The ``maximal implies prime'' result in the case where $R$ is commutative
and $\beta=\aleph_0$ was noted in Exercise~11 of~\cite[p.~8]{KapCommutative}.
The case of larger infinite cardinals $\alpha$ for commutative rings was
proved by Gilmer and Heinzer in~\cite[Prop.~3]{GH}. (In~\cite[p.~3017]{LR},
we mistakenly suggested that this had not been previously observed in the
literature.)

One might wonder whether the obvious analogue of Cohen's Theorem for
right ideals with generating sets of higher cardinalities is also true,
in light of Proposition~\ref{f.g. right ideals}. However, in
the commutative case Gilmer and Heinzer~\cite{GH} have already settled this
in the negative. The rings which serve as their counterexamples are
(commutative) valuation domains.

\separate

It is well-known that Cohen's Theorem~\ref{original Cohen's Theorem} can
be used to prove that if $R$ is a commutative noetherian ring, then the
power series ring $R[[x]]$ is also noetherian.
In~\cite{Michler}, G.~Michler proved a version of Cohen's Theorem
and gave an analogous application of this result to power series over
noncommutative rings.
Here we show that our version of Cohen's Theorem can be applied in the
same way.

\begin{corollary}
If a ring $R$ is right noetherian, then the power series ring $R[[x]]$ is
also right noetherian.
\end{corollary}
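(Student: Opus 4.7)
The plan is to apply Theorem~\ref{Cohen's Theorem}: it suffices to show that every completely prime right ideal $P \subsetneq S := R[[x]]$ is finitely generated. Since $x$ is central in $S$, the set $xS$ is a two-sided ideal with $S/xS \cong R$, and one has $x \in \idealizer_S(P)$ automatically for every right ideal $P$ of $S$. I will split into two cases according to whether $x \in P$.

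If $x \in P$, then $P \supseteq xS$, so $P/xS$ is a right ideal of $S/xS \cong R$. Since $R$ is right noetherian, $P/xS$ is finitely generated as a right $R$-module, say by classes $\bar f_1, \ldots, \bar f_k$. Lifting to $f_1, \ldots, f_k \in P$ gives $P = xS + f_1 S + \cdots + f_k S$, a finitely generated right ideal.

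Suppose instead that $x \notin P$. Since $x \in \idealizer_S(P)$, Definition~\ref{completely prime definition} forces $x^{-1}P = P$, i.e.\ whenever $xh \in P$ we have $h \in P$. Let $\epsilon\colon S \to R$ be the constant-term ring homomorphism (with $\ker\epsilon = xS$), and set $I := \epsilon(P)$, a right ideal of $R$. Since $R$ is right noetherian, choose $f_1, \ldots, f_k \in P$ whose constant terms $\epsilon(f_i)$ generate $I$; the claim is that $P = f_1 S + \cdots + f_k S$. Given $g \in P$, write $\epsilon(g) = \sum \epsilon(f_i) r_i^{(0)}$ with $r_i^{(0)} \in R$, so that $g - \sum f_i r_i^{(0)} \in P \cap xS$ equals $x g^{(1)}$ for some $g^{(1)} \in S$; then $x^{-1}P = P$ yields $g^{(1)} \in P$. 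Iterating produces $r_i^{(n)} \in R$ and $g^{(n)} \in P$ (with $g^{(0)} := g$) satisfying $g^{(n)} = \sum f_i r_i^{(n)} + x g^{(n+1)}$. Using the centrality of $x$ to rewrite $x^n f_i r_i^{(n)} = f_i r_i^{(n)} x^n$, one telescopes to obtain $g - \sum_i f_i \bigl(\sum_{n=0}^{N} r_i^{(n)} x^n\bigr) \in x^{N+1} S$ for every $N$. Since $R[[x]]$ is $x$-adically complete, the power series $s_i := \sum_{n \geq 0} r_i^{(n)} x^n$ lies in $S$, and since $\bigcap_N x^N S = 0$ one concludes $g = \sum f_i s_i \in \sum f_i S$.

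The main obstacle lies in the second case, specifically in making the telescoping convergence argument rigorous: one must use both the centrality of $x$ (to move the $x^n$ past $f_i$) and the $x$-adic completeness and separation of $R[[x]]$, while the lifting of each successive $g^{(n+1)}$ back into $P$ is precisely where the completely prime hypothesis $x^{-1}P = P$ is consumed.
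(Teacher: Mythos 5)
Your proposal is correct and follows essentially the same route as the paper's proof: reduce to Theorem~\ref{Cohen's Theorem}, split on whether $x \in P$, and in the case $x \notin P$ use the centrality of $x$ together with $x^{-1}P = P$ to iteratively lift constant terms and telescope to a finite generating set coming from the constant-term ideal. Your treatment of the convergence step (via $x$-adic completeness and $\bigcap_N x^N S = 0$) just makes explicit what the paper's infinite-sum formula leaves implicit.
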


\begin{proof}
Let $P$ be a completely prime right ideal of $S:=R[[x]]$;
by Theorem~\ref{Cohen's Theorem} it suffices to show that $P$ is finitely
generated. Let $C_R\subseteq R$ be the right ideal of $R$ consisting of all
constant terms of all power series in $P$. Then $P$ is finitely generated. Choose
power series $f_1,\dots,f_n\in P$ whose constant terms generate $C$,
and set $I:=\sum f_j R\subseteq P$. If $x\in P$ then it is easy to see that $P=I + xS$
is finitely generated. So assume that $x\notin P$. In this case, we claim
that $P=I$. Again we will have $P$ finitely generated and the proof will
be complete. Given $h\in P$, the constant term of $h_0:=h$ is equal to the constant
term of some $g_0=\sum a_{0j} f_j\in I$, where $a_{0j}\in R$. Then $h_0-g_0=xh_1$
for some $h_1\in S$. Notice that $xh_1=h_0-g_0\in P$. Because $P$ is completely
prime with $xP=Px\subseteq P$ and $x\notin P$, it follows that $h_1\in P$. One
can proceed inductively to find $g_i=\sum_{j=1}^n a_{ij}f_j$ ($a_{ij}\in R$) such that
$h_i=g_i+xh_{i+1}$. Hence
$h=\sum_{j=1}^n\left(\sum_{i=0}^{\infty} a_{ij}x^i\right)f_j\in I$. 
\end{proof}

\separate

Before moving on, we mention a sort of ``converse'' to the CPIP~\ref{CPIP}
characterizing exactly which families $\F$ of right ideals are such that $\Max(\F')$
consists of completely prime right ideals. It turns out that a weak form of the
Oka property~\eqref{Oka property} characterizes these families.

\begin{proposition}
Let $\F$ be a family of right ideals in a ring $R$. All right ideals in $\Max(\F')$
are completely prime iff,
for all $I_R\subseteq R$ where every right ideal $J\supsetneq I$
lies in $\F$ and for all elements $a\in\idealizer_R(I)$,
the Oka property~\eqref{Oka property} is satisfied.
\end{proposition}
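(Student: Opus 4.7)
The plan is to prove both implications directly, essentially by retracing the argument in the proof of the CPIP~\ref{CPIP} and noticing that only the weak Oka property is actually used. The key tool in both directions is the equivalent reformulation from Definition~\ref{completely prime definition}: a proper right ideal $P$ is completely prime iff for every $a\in R$ with $aP\subseteq P$ and $a\notin P$, one has $a^{-1}P=P$.

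For the ``if'' direction, I would assume the weak Oka property and take $I\in\Max(\F')$. Maximality means every right ideal $J\supsetneq I$ lies in $\F$, so $I$ satisfies the hypothesis of the weak Oka property. To show complete primality, take $a\in\idealizer_R(I)$ with $a\notin I$ and $b\in R$ with $ab\in I$; I want $b\in I$. Since $a\notin I$, we have $I\subsetneq I+aR$, hence $I+aR\in\F$. Suppose for contradiction that $b\notin I$; since $aI\subseteq I$ gives $I\subseteq a^{-1}I$, and $b\in a^{-1}I\setminus I$, we get $I\subsetneq a^{-1}I$, so $a^{-1}I\in\F$. The weak Oka property (which applies because $a\in\idealizer_R(I)$) then forces $I\in\F$, contradicting $I\in\F'$.

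For the ``only if'' direction, I would assume $\Max(\F')$ consists of completely prime right ideals and let $I$ be a right ideal whose proper superideals all lie in $\F$, with $a\in\idealizer_R(I)$ and $I+aR,\,a^{-1}I\in\F$. Assume for contradiction that $I\in\F'$. Then $I\in\Max(\F')$ by hypothesis on $I$, so $I$ is completely prime. From $I+aR\in\F$ I must have $a\notin I$ (else $I+aR=I\in\F'$). Now $a\in\idealizer_R(I)$ and $a\notin I$ in a completely prime $I$ give $a^{-1}I=I$, so $a^{-1}I\in\F$ forces $I\in\F$, a contradiction.

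There is no real obstacle here: once one recognizes that the CPIP proof only appeals to the Oka property at ideals maximal in $\F'$ and with ``multiplier'' $a$ lying in the idealizer, both directions reduce to this bookkeeping observation. The only subtlety worth flagging is that the hypothesis ``every right ideal properly containing $I$ lies in $\F$'' must be used in the converse direction to place $I$ inside $\Max(\F')$; the restriction to $a\in\idealizer_R(I)$ is then exactly what is needed so that complete primality of $I$ can be invoked via the $a^{-1}I=I$ reformulation.
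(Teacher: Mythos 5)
Your proposal is correct and follows essentially the same route as the paper: the forward direction is just the CPIP argument, observing that it only invokes the Oka property at ideals maximal in $\F'$ with multiplier in the idealizer, and the converse places $I$ in $\Max(\F')$, deduces $a\notin I$ from $I+aR\in\F$, and applies the reformulation $a^{-1}I=I$ from Definition~\ref{completely prime definition}. No substantive difference from the paper's proof.
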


\begin{proof}
First suppose that $\F$ satisfies property~\eqref{Oka property} for all $I$ and $a$
described above. Then the
proof of the CPIP~\ref{CPIP} applies to show that any right ideal in $\Max(\F')$
is completely prime. Conversely, suppose that $\Max(\F')$ consists of completely
prime right ideals, and let $I_R\subseteq R$ and $a\in\idealizer_R(I)$ be as described
above.
Assume for contradiction that $I\notin\F$. It follows that $I\in\Max(\F')$, so
$I$ is a completely prime right ideal. Because $I\notin\F$ and $I+aR\in\F$, we
see that $a\notin I$. But then the remark at the end of Definition~\ref{completely prime definition} 
shows that $I=a^{-1}I\in\F$, a contradiction. We conclude that in fact $I\in\F$,
completing the proof.
\end{proof}

So for a commutative ring $R$, this result classifies precisely which families
$\F$ of ideals satisfy $\Max(\F')\subseteq\Spec(R)$. (Note: here we can replace
$a\in\idealizer_R(I)$ by $a\in R$.) In fact, the above result was first discovered in
the commutative setting by T.\,Y.~Lam and the present author during the development
of~\cite{LR}, though the result did not appear there.

\section{Right Oka families and classes of cyclic modules}
\label{cyclic modules section}

In order to apply the CPIP~\ref{CPIP}, we need an effective tool for
constructing right Oka families.
The relevant result will be Theorem~\ref{extension correspondence}
below. 
This theorem generalizes one of the most important facts about Oka families
in commutative rings: there is a correspondence between Oka families in a ring $R$
and certain classes of cyclic $R$-modules (to be defined below).
Throughout this paper we use $\M_R$ to denote the class of all right $R$-modules
and $\M_R^c\subseteq \M_R$ to denote the subclass of cyclic $R$-modules.

\begin{definition}
\label{closed under extensions definition}
Let $R$ be any ring. A subclass $\C\subseteq \M_{R}^{c}$ with $0\in \C$
is \emph{closed under extensions} if, for every exact sequence
$0\to L\to M\to N\to 0$ of \underline{cyclic} right $R$-modules,
whenever $L,\, N\in\C$ it follows that $M\in\C$.
\end{definition}

Specifically, it was shown in~\cite[Thm.~4.1]{LR} that for any commutative
ring $R$, the Oka families in $R$ are in bijection with the classes
of cyclic $R$-modules that are closed under extensions. This
correspondence provided many interesting examples of Oka families in
commutative rings. The goal of this section is to show that the Oka families of
right ideals in an arbitrary ring $R$ correspond to the classes of cyclic right
$R$-modules which are closed under extensions.

In a commutative ring $R$, the correspondence described above was given
by associating to any Oka family $\F$ the class
$\C:=\{ M_{R}:M\cong R/I\text{ for some }I\in\F\}$ of cyclic modules.
Then $\F$ is determined by $\C$ because, for an ideal $I$ of $R$, we may recover $I$ from the isomorphism class of
the cyclic module $R/I$ since $I$ is the annihilator of this cyclic
module. (In fact, this works for any family $\F$ of ideals in $R$.) 
However, in a noncommutative ring there can certainly exist right ideals
$I,J\subseteq R$ such that $I\neq J$ but $R/I\cong R/J$ (as right $R$-modules).

\begin{example}
\label{similar maximal example}
Any simple artinian ring $R$ has a single isomorphism class of
simple right modules.  Thus all maximal right ideals $\m_R\subseteq R$ have
isomorphic factor modules $R/\m$. But if $R\cong \mathbb{M}_n(k)$ for a division
ring $k$ and if $n>1$ (i.e.~ $R$ is not a division ring), then there exist
multiple maximal right ideals: we may take $\m_i$ ($i=1,\dots,n$) to correspond
to the right ideal of matrices whose $i$th row is zero. In fact, over an infinite
division ring $k$ even the ring $\mathbb{M}_2(k)$ has infinitely many maximal
right ideals! This is true because, for any $\lambda\in k$, the set of all matrices
of the form
\[
\begin{pmatrix}
a & b\\
\lambda a & \lambda b
\end{pmatrix}
\]
is a maximal right ideal, and these right ideals are distinct for each value
of $\lambda$. (Of course, a similar construction also works over the ring
$\mathbb{M}_n(k)$ for $n>2$.) 
\end{example}

Therefore we do not expect every family $\F$ of right ideals to naturally
correspond to a class of cyclic modules. This prompts the following definition.

\begin{definition}
\label{similar definition}
Two right ideals $I$ and $J$ of a ring $R$ are said to be \emph{similar}
if $R/I\cong R/J$ as right $R$-modules. A family $\F$ of right ideals in
a ring $R$ is \emph{closed under similarity} if, for any similar right ideals
$I_R,J_R\subseteq R$, $I\in \F$ implies $J\in \F$. This is equivalent to
$I\in \F \iff J\in \F$ whenever $R/I\cong R/J$.
\end{definition}

The notion of similarity dates at least as far back as Jacobson's
text~\cite[pp.~33~\&~130]{Jacobson} (although he only studied this
idea in specific classes of rings).
With the appropriate terminology in place, the next fact is easily verified.

\begin{proposition}
\label{similarity bijection}
For any ring $R$, there is a bijective correspondence 
\[
\left\{
\begin{array}{c}
\textnormal{families $\F$ of right}\\
\textnormal{ideals of $R$ that are}\\
\textnormal{closed under similarity}
\end{array}
\right\} \longleftrightarrow \left\{
\begin{array}{c}
\textnormal{classes $\C$ of cyclic right}\\
\textnormal{$R$-modules that are closed}\\
\textnormal{under isomorphism}
\end{array}
\right\}.
\]
For a family $\F$ and a class $\C$ as above, the
correspondence is given by the maps
\begin{align*}
\F &\mapsto \C_{\F}:=\{M_{R}:M\cong R/I \text{ for some }I\in\F\},\\
\C &\mapsto \F_{\C}:=\{I_{R}\subseteq R:R/I\in \C\}.
\end{align*}
\end{proposition}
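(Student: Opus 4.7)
The plan is to verify that the two maps $\F\mapsto\C_\F$ and $\C\mapsto\F_\C$ are well-defined on the indicated collections and mutually inverse; this is a largely formal unraveling of the definitions, and the only subtle point is making sure the closure hypotheses on both sides are used at exactly the right places.

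First I would check the maps are well-defined. For any family $\F$ of right ideals, the class $\C_\F$ is by construction a class of cyclic right $R$-modules closed under isomorphism, so no hypothesis on $\F$ is actually needed to define $\C_\F$. Conversely, given a class $\C$ of cyclic modules closed under isomorphism and a right ideal $I\in\F_\C$, any $J$ similar to $I$ satisfies $R/J\cong R/I\in\C$, and isomorphism-closure of $\C$ gives $R/J\in\C$, i.e.\ $J\in\F_\C$. Hence $\F_\C$ is closed under similarity.

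Next I would verify the two round trips. Start with $\F$ closed under similarity. For any right ideal $I$, one has $I\in\F_{\C_\F}$ iff $R/I\in\C_\F$ iff $R/I\cong R/J$ for some $J\in\F$, i.e.\ iff $I$ is similar to some member of $\F$; by similarity-closure this is equivalent to $I\in\F$, so $\F_{\C_\F}=\F$. In the other direction, start with $\C$ closed under isomorphism. Any $M\in\C$ is cyclic, hence $M\cong R/I$ for some $I_R\subseteq R$; then $R/I\in\C$ by isomorphism-closure, so $I\in\F_\C$ and $M\in\C_{\F_\C}$. Conversely $M\in\C_{\F_\C}$ means $M\cong R/I$ for some $I\in\F_\C$, i.e.\ with $R/I\in\C$, and isomorphism-closure again yields $M\in\C$. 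Thus $\C_{\F_\C}=\C$.

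There is no real obstacle here; the only thing worth flagging is the symmetric role played by the two closure hypotheses. Similarity-closure of $\F$ is needed precisely to recover $I\in\F$ from the knowledge that $R/I\cong R/J$ for some $J\in\F$, while isomorphism-closure of $\C$ is needed precisely to recover $M\in\C$ from the knowledge that $M\cong R/I$ with $R/I\in\C$. Dropping either hypothesis breaks exactly one of the two round-trip verifications, which also explains why these conditions are the correct ones to impose in the statement of the bijection.
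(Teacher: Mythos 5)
Your proof is correct and is exactly the routine verification the paper has in mind (the paper simply states the proposition is "easily verified" and omits the details). Your careful bookkeeping of where similarity-closure of $\F$ and isomorphism-closure of $\C$ are each used is accurate and complete.
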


We will show that every right Oka family is closed under similarity with
the help of the following two-part lemma. The first part describes, up
to isomorphism, the cyclic submodules of a cyclic module $R/I$.
The second part is a rather well-known criterion for two cyclic modules
to be isomorphic.

\begin{lemma}
\label{cyclic module lemmas}
Let $R$ be a ring.
\begin{itemize}
\item[(A)]
For any right ideal $I\subseteq R$ and any element $a\in R$, there is an
isomorphism
\[
R/a^{-1}I \overset{\sim}{\longrightarrow} (I+aR)/I \subseteq R/I
\]
given by $r+a^{-1}I \mapsto ar+I$.
\item[(B)]
Given two right ideals $I_{R},J_{R}\subseteq R$, $R/I\cong R/J$
iff there exists $a\in R$ such that $I+aR=R$ and $a^{-1}I=J$.
\end{itemize}
\end{lemma}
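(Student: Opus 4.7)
The plan is to prove (A) by an application of the first isomorphism theorem, and then derive both directions of (B) from (A).

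For (A), I would define a right $R$-module homomorphism $\varphi\colon R\to (I+aR)/I$ by $\varphi(r)=ar+I$. This is clearly well-defined and $R$-linear. It is surjective because every element of $(I+aR)/I$ has the form $i+ar'+I=ar'+I$ with $i\in I$, $r'\in R$. Its kernel is exactly $\{r\in R:ar\in I\}=a^{-1}I$. The first isomorphism theorem then produces the stated isomorphism $R/a^{-1}I\xrightarrow{\sim}(I+aR)/I$ carrying $r+a^{-1}I$ to $ar+I$.

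For the ``if'' direction of (B), assume there exists $a\in R$ with $I+aR=R$ and $a^{-1}I=J$. Then (A) gives
\[
R/J=R/a^{-1}I\;\cong\;(I+aR)/I=R/I.
\]
For the ``only if'' direction, suppose $\psi\colon R/J\xrightarrow{\sim}R/I$ is an isomorphism of right $R$-modules. Choose $a\in R$ to be any representative of the coset $\psi(1+J)\in R/I$. Then by right $R$-linearity
\[
\psi(r+J)=\psi\bigl((1+J)\cdot r\bigr)=(a+I)\cdot r=ar+I
\]
for every $r\in R$. The image of $\psi$ is $(I+aR)/I$, so surjectivity forces $I+aR=R$. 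On the other hand, the composition $R\twoheadrightarrow R/J\xrightarrow{\psi} R/I$, which sends $r\mapsto ar+I$, has kernel $a^{-1}I$; since this composition factors through an isomorphism with the quotient map $R\twoheadrightarrow R/J$, its kernel also equals $J$. Hence $a^{-1}I=J$, completing the proof.

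Both parts are essentially routine diagram-chases, so there is no serious obstacle; the only step requiring a bit of care is making sure one applies right $R$-linearity correctly in (B) (writing $r+J=(1+J)\cdot r$ to pull $r$ outside $\psi$) and choosing the direction of the isomorphism so that the element $a$ ends up in the correct ring. Once (A) is in hand, (B) follows by reading off surjectivity and injectivity of $\psi$ as the two conditions $I+aR=R$ and $a^{-1}I=J$ respectively.
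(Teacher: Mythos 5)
Your proposal is correct and follows essentially the same route as the paper: part (A) via the First Isomorphism Theorem applied to $r\mapsto ar+I$, and part (B) with the ``if'' direction read off from (A) and the ``only if'' direction verified by the standard argument (which the paper leaves to the reader, citing references). No gaps.
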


\begin{proof}
Part~(A) is a straightforward application of the First Isomorphism
Theorem.
Proofs for part~(B) can be found, for instance, in~\cite[Prop.~1.3.6]{Cohn}
or~\cite[Ex.~1.30]{ExercisesClassical}. (In fact, it was already observed in
Jacobson's text~\cite[p.~33]{Jacobson}, though in the special setting of PIDs.)
In any case, the ``if'' direction follows from part~(A) above, and
the reader can readily verify the ``only if'' direction.
\end{proof}

These elementary observations are very important for us. The reader should
be aware that we will freely use the isomorphism $R/a^{-1}I \cong (I+aR)/I$
throughout this paper.

\begin{proposition}
\label{similarity-closed characterization}
A family $\F$ of right ideals in a ring $R$ is closed under similarity
iff for any $I_{R}\subseteq R$ and $a\in R$, $I+aR=R$ and $a^{-1}I\in\F$
imply $I\in\F$. In particular, any right Oka family $\F$ is closed under
similarity.
\end{proposition}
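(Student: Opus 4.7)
The plan is to reduce both directions to Lemma~\ref{cyclic module lemmas}(B), which records that $R/I \cong R/J$ iff there exists $a \in R$ with $I + aR = R$ and $a^{-1}I = J$. Every statement in the proposition is really just this isomorphism criterion dressed up in different clothing.

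For the forward direction, I would assume $\F$ is closed under similarity and take $I$, $a$ with $I + aR = R$ and $a^{-1}I \in \F$. Setting $J := a^{-1}I$, the two equalities $I + aR = R$ and $a^{-1}I = J$ are exactly what Lemma~\ref{cyclic module lemmas}(B) asks for, so $R/I \cong R/J = R/a^{-1}I$, and thus $I$ is similar to $a^{-1}I \in \F$. Closure under similarity then delivers $I \in \F$.

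For the converse, I would assume the displayed implication and take similar right ideals $I, J \subseteq R$ with $I \in \F$. Applied to the isomorphism $R/J \cong R/I$, Lemma~\ref{cyclic module lemmas}(B) supplies some $a \in R$ with $J + aR = R$ and $a^{-1}J = I$. Now the pair $(J, a)$ satisfies the hypothesis of the assumed implication (since $a^{-1}J = I \in \F$), so $J \in \F$, as required. The key subtlety here — and really the only place one has to pay attention — is to apply Lemma~\ref{cyclic module lemmas}(B) to the isomorphism in the direction $R/J \cong R/I$, not $R/I \cong R/J$, so that the witness $a$ belongs to the right ideal $J$ one is trying to push into $\F$.

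For the ``in particular'' clause, suppose $\F$ is a right Oka family. Then $R \in \F$ by Definition~\ref{Oka family definition}, so whenever $I + aR = R$ and $a^{-1}I \in \F$, we already have $I + aR = R \in \F$, and the Oka property~\eqref{Oka property} yields $I \in \F$. Thus $\F$ satisfies the criterion established in the first part and is therefore closed under similarity. No real obstacle arises; the entire proof is bookkeeping on top of Lemma~\ref{cyclic module lemmas}(B).
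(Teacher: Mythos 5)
Your proposal is correct and follows the same route as the paper: both directions of the equivalence are exactly Lemma~\ref{cyclic module lemmas}(B) unwound (with your attention to applying the criterion with the witness $a$ relative to the ideal being pushed into $\F$ being the right care to take), and the ``in particular'' clause is just $R\in\F$ plus the Oka property~\eqref{Oka property}, precisely as in the paper's one-line proof. Nothing to add.
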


\begin{proof}
The first statement follows directly from
Lemma~\ref{cyclic module lemmas}(B), and the second statement follows
from Definition~\ref{Oka family definition} because every right Oka family
contains the unit ideal $R$.
\end{proof}

Thus we see that every right Oka family will indeed correspond, as in
Proposition~\ref{similarity bijection}, to \emph{some} class of cyclic right
modules; it remains to show that they correspond precisely to the classes
that are closed under extensions. We first need to mention one
fact regarding module classes closed under extensions. From the condition
$0\in\C$ and the exact sequence $0\to L\to M\to 0\to 0$ for $L_{R}\cong M_{R}$,
we see that a class $\C$ of cyclic modules closed under extensions is also
closed under isomorphisms. We are now ready to prove the main result of this
section.

\begin{theorem}
\label{extension correspondence}
Given a class $\C$ of cyclic right $R$-modules that is closed under
extensions, the family $\F_{\C}$ is an Oka family of right ideals.
Conversely, given a right Oka family $\F$, the class $\C_{\F}$ of cyclic
right $R$-modules is closed under extensions.
\end{theorem}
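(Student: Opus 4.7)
The plan is to prove the two implications separately, with both directions pivoting on the same computational core: the isomorphism $R/a^{-1}I \cong (I+aR)/I$ from Lemma~\ref{cyclic module lemmas}(A) translates the Oka closure property~\eqref{Oka property} into exactness of a canonical short sequence of cyclic modules. The other ingredient needed in the second direction is that any right Oka family is automatically closed under similarity, which is Proposition~\ref{similarity-closed characterization}. Under the correspondence of Proposition~\ref{similarity bijection}, this closure is what lets us replace ``chosen'' cyclic presentations by ones dictated by our ideal-theoretic set-up.

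For the forward direction, let $\C$ be closed under extensions. First I would record that $R\in\F_\C$: since $0\in\C$ and $R/R \cong 0$ (and extension-closure entails isomorphism-closure, as noted just before the theorem), this is immediate. To verify~\eqref{Oka property}, assume $I+aR,\ a^{-1}I\in\F_\C$. The natural short exact sequence
\[
0\longrightarrow (I+aR)/I \longrightarrow R/I \longrightarrow R/(I+aR)\longrightarrow 0
\]
consists of cyclic modules: the quotient is obviously cyclic, and the submodule is isomorphic to $R/a^{-1}I$ by Lemma~\ref{cyclic module lemmas}(A), hence cyclic. By hypothesis the two outer terms lie in $\C$, so extension-closure applied inside $\M_R^c$ forces $R/I\in\C$, i.e.\ $I\in\F_\C$.

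For the converse, let $\F$ be a right Oka family and take an exact sequence $0\to L\to M\to N\to 0$ of cyclic right $R$-modules with $L,N\in\C_\F$. Write $M\cong R/I$, and pull back the kernel of $M\twoheadrightarrow N$ to obtain a right ideal $K\supseteq I$ with $N\cong R/K$ and $L\cong K/I$. The crucial step is choosing a suitable cyclic generator: since $L=K/I$ is cyclic, pick $a\in K$ with $K/I = (a+I)R$, so that $K = I + aR$. Then Lemma~\ref{cyclic module lemmas}(A) gives $L\cong R/a^{-1}I$. Because right Oka families are closed under similarity (Proposition~\ref{similarity-closed characterization}), the memberships $N\in\C_\F$ and $L\in\C_\F$ translate to $I+aR\in\F$ and $a^{-1}I\in\F$. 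Applying~\eqref{Oka property} yields $I\in\F$, i.e.\ $M\in\C_\F$, as desired. I would also briefly note $0\in\C_\F$ (from $R\in\F$).

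The main subtlety—really the only one—is in the converse: one must arrange the cyclic presentation of $L$ so that it matches the shape demanded by the Oka axiom. The temptation to present $L$ abstractly as $R/J$ for some unrelated $J$ would leave no bridge to the Oka property. Insisting that $L$ be realized inside $M=R/I$ as $(I+aR)/I$ for a specific $a$ (obtained from a cyclic generator) is what aligns the data, and then Lemma~\ref{cyclic module lemmas}(A) combined with similarity-closure does all of the remaining work.
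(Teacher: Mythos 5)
Your proof is correct and follows essentially the same route as the paper: the forward direction uses the exact sequence $0\to (I+aR)/I\to R/I\to R/(I+aR)\to 0$ together with $(I+aR)/I\cong R/a^{-1}I$, and the converse realizes the cyclic submodule $L$ inside $R/I$ as $(I+aR)/I$ for a suitable $a$ and invokes closure under similarity before applying the Oka property. Your explicit pullback of the kernel to a right ideal $K=I+aR$ is just a slightly more detailed rendering of the paper's one-line observation that a cyclic submodule of $R/I$ has this form.
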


\begin{proof}
First suppose that the given class $\C$ is closed under extensions.
Then $R\in \F_{\C}$ because $0\in \C$. So let $I_R\subseteq R$ and
$a\in R$ be such that $I+aR$, $a^{-1}I\in \F_{\C}$. Then $R/(I+aR)$
and $R/a^{-1}I$ lie in $\C$. Moreover, we have an exact sequence
\[
0\rightarrow (I+aR)/I\rightarrow R/I\rightarrow R/(I+aR) \rightarrow 0,
\]
where $(I+aR)/I\cong R/a^{-1}I$ lies in $\C$ (recall that $\C$ is
closed under isomorphisms). Because $\C$ is closed under extensions,
$R/I\in \C$. Thus $I\in \F_{\C}$, proving that $\F_{\C}$ is a right
Oka family.

Now suppose that $\F$ is a right Oka family. That $0\in\C_{\F}$
follows from the fact that $R\in \F$. Consider an exact sequence of
cyclic right $R$-modules 
\[
0\rightarrow L\rightarrow M\rightarrow N\rightarrow 0
\]
where $L,N\in \C_{\F}$, so that there exist $A,B\in\F$ such that
$L\cong R/A$ and $N\cong R/B$. We may identify $M$ up to isomorphism with
$R/I$ for some right ideal $I\subseteq R$. 
Because $L$ is cyclic and embeds in $M\cong R/I$, 
we have $L\cong (I+aR)/I$ for some $a\in R$.
Hence $R/(I+aR)\cong N\cong R/B$, and Proposition~\ref{similarity-closed characterization}
implies that $I+aR\in \F$. Note also that $R/a^{-1}I\cong (I+aR)/I\cong L\cong R/A$,
so by Proposition~\ref{similarity-closed characterization} we conclude that
$a^{-1}I\in \F$. Because $\F$ is a right Oka family we must have $I\in \F$.
So $M\cong R/I$ implies that $M\in \C_{\F}$.
\end{proof}

We examine one consequence of this correspondence. This will require the
following lemma, which compares a class $\C\subseteq\M_R^c$ that is closed
under extensions with its closure under extensions in the larger class $\M_R$.

\begin{lemma}
\label{full closure under extensions}
Let $\C$ be a class of cyclic right $R$-modules that is closed under extensions
(as in Definition~\textnormal{\ref{closed under extensions definition}}), and
let $\overline{\C}$ be its closure under extensions in the class $\M_R$ of all cyclic
right $R$-modules. Then $\C=\overline{\C}\cap\M_R^c$.
\end{lemma}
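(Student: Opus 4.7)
The inclusion $\C \subseteq \overline{\C} \cap \M_R^c$ is immediate, so the content is in the reverse direction. My plan is to first give an explicit description of $\overline{\C}$ as the class of right $R$-modules admitting a finite filtration with successive quotients in $\C$, and then induct on the length of such a filtration to promote $\overline{\C}$-membership to $\C$-membership in the cyclic case.

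First I would let $\C'$ denote the class of modules $M \in \M_R$ possessing a filtration
\[
0 = M_0 \subseteq M_1 \subseteq \cdots \subseteq M_n = M
\]
with each $M_i/M_{i-1} \in \C$. Clearly $\C \subseteq \C'$. To see that $\C'$ is closed under extensions, given $0 \to L \to M \to N \to 0$ with filtrations of $L$ and $N$ whose quotients lie in $\C$, pull the filtration of $N$ back along $M \twoheadrightarrow N$ and concatenate it with the filtration of $L$; the resulting filtration of $M$ again has all successive quotients in $\C$. By the minimality of $\overline{\C}$ we obtain $\overline{\C} \subseteq \C'$, while the reverse inclusion is a straightforward induction on filtration length using that $\overline{\C}$ is closed under extensions. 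Hence $\overline{\C} = \C'$.

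Now I take $M \in \overline{\C} \cap \M_R^c$ with a filtration as above and induct on its length $n$. The cases $n = 0, 1$ are trivial, since $0 \in \C$ and a length-$1$ filtration means $M \in \C$ outright. For $n \geq 2$, the submodule $M_1 \in \C$ is automatically cyclic, and $M/M_1$ is cyclic as a quotient of $M$ and carries the induced filtration $0 \subseteq M_2/M_1 \subseteq \cdots \subseteq M/M_1$ of length $n-1$, with quotients $(M_i/M_1)/(M_{i-1}/M_1) \cong M_i/M_{i-1} \in \C$. By the inductive hypothesis $M/M_1 \in \C$. The sequence
\[
0 \to M_1 \to M \to M/M_1 \to 0
\]
is then an exact sequence of cyclic right $R$-modules with $M_1$ and $M/M_1$ in $\C$, so Definition~\ref{closed under extensions definition} forces $M \in \C$.

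The only subtlety is the choice of how to run the induction. The ``obvious'' move of peeling off $M_{n-1}$ from the top fails because $M_{n-1}$ need not be cyclic and so cannot be fed back into the inductive hypothesis. Peeling $M_1$ off from the bottom instead exploits two asymmetries simultaneously: $M_1 \in \C$ is cyclic for free, and the quotient $M/M_1$ inherits cyclicity from $M$. This is exactly what lets the closure-under-\emph{cyclic}-extensions hypothesis on $\C$ absorb the extension at each stage.
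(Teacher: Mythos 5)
Your proof is correct and is essentially the paper's own argument: the paper likewise extracts a finite filtration of $M$ with factors in $\C$ and shows by a (downward) induction that each cyclic quotient $M/M_j$ lies in $\C$, using precisely the extensions $0 \to M_j/M_{j-1} \to M/M_{j-1} \to M/M_j \to 0$ that your induction on filtration length produces when unrolled. The only difference is cosmetic: your explicit identification of $\overline{\C}$ with the class of finitely filtered modules is a detail the paper leaves implicit.
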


\begin{proof}
Certainly $\C\subseteq\overline{\C}\cap\M_R^c$. Conversely, suppose that
$M\in\overline{\C}\cap\M_R^c$. Because $M\in\overline{\C}$, there is a filtration
\[
0 = M_0 \subseteq M_1 \subseteq \cdots \subseteq M_n = M
\]
such that each $M_j/M_{j-1}\in\C$. One can then prove by downward
induction that the cyclic modules $M/M_j$ lie in $\C$. So $M\cong M/M_0$
and $M/M_0\in\C$ imply that $M\in\C$.
\end{proof}

\begin{corollary}
Let $\F$ be a right Oka family in a ring $R$. Suppose that
$I_R\subseteq R$ is such that $R/I$ has a filtration
\[
0 = M_0 \subseteq M_1 \subseteq \cdots \subseteq M_n = R/I
\]
where each filtration factor is cyclic and of the form
$M_j/M_{j-1}\cong R/I_j$ for some $I_j\in\F$. Then $I\in\F$.
\end{corollary}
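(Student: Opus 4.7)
The plan is to reduce this to the correspondence between right Oka families and extension-closed classes of cyclic modules, together with the downward induction from Lemma~\ref{full closure under extensions}. Concretely, I would translate the hypotheses into the language of the associated class $\C_\F$ of cyclic right $R$-modules and then conclude by (iterated) application of closure under extensions.

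First, by Theorem~\ref{extension correspondence} the class $\C_\F=\{M_R:M\cong R/J\text{ for some }J\in\F\}$ is closed under extensions in the sense of Definition~\ref{closed under extensions definition}. Since each $I_j\in\F$, the hypothesis gives $M_j/M_{j-1}\cong R/I_j\in\C_\F$ for $j=1,\dots,n$. The goal is to show $R/I\in\C_\F$, since by Proposition~\ref{similarity bijection} this is equivalent to $I\in\F$ (as $\C_\F$ is automatically closed under isomorphism, hence under similarity).

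Next, I would run a downward induction on $j$ to show that each quotient $(R/I)/M_j$ lies in $\C_\F$; this is essentially the argument in the proof of Lemma~\ref{full closure under extensions}. The base case $j=n$ is trivial since $(R/I)/M_n=0\in\C_\F$. For the inductive step, assuming $(R/I)/M_j\in\C_\F$, consider the short exact sequence
\[
0\to M_j/M_{j-1}\to (R/I)/M_{j-1}\to (R/I)/M_j\to 0.
\]
Here $(R/I)/M_{j-1}$ is cyclic because it is a quotient of the cyclic module $R/I$; likewise $(R/I)/M_j$ is cyclic. Since $M_j/M_{j-1}\in\C_\F$ by hypothesis and $(R/I)/M_j\in\C_\F$ by induction, closure of $\C_\F$ under extensions of cyclic modules forces $(R/I)/M_{j-1}\in\C_\F$. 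Taking $j=1$ yields $R/I=(R/I)/M_0\in\C_\F$, so $I\in\F$ as desired.

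There is essentially no obstacle: the only subtlety is recognizing that the intermediate modules $(R/I)/M_j$ appearing in the induction are automatically cyclic (being quotients of $R/I$), which is exactly what lets us invoke the extension-closure property of $\C_\F$ (which a priori only applies to extensions within $\M_R^c$). This is the same mechanism that underlies Lemma~\ref{full closure under extensions}, and in fact the corollary could be stated simply as: the hypothesis places $R/I$ in $\overline{\C_\F}\cap\M_R^c=\C_\F$.
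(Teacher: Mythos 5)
Your proposal is correct and follows the paper's own route: the paper likewise passes to the class $\C_{\F}$ (closed under extensions by Theorem~\ref{extension correspondence}) and invokes Lemma~\ref{full closure under extensions}, whose proof is exactly the downward induction on the quotients $(R/I)/M_j$ that you spell out. The only difference is that you inline that induction rather than citing the lemma, and your closing remark that the hypothesis places $R/I$ in $\overline{\C_{\F}}\cap\M_R^c=\C_{\F}$ is precisely the paper's argument.
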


\begin{proof}
Let $\C:=\C_{\F}$, which is closed under extensions by
Theorem~\ref{extension correspondence}. Then the above filtration of
the cyclic module $M = R/I$ has filtration factors isomorphic to the
$R/I_j \in \C$. From Lemma~\ref{full closure under extensions} it
follows that $R/I \in \C$, and thus $I \in \F_{\C} = \F$.
\end{proof}

This implies, for instance, that if a right Oka family $\F$
in a ring $R$ contains all maximal right ideals of $R$, then
it contains all right ideals $I$ such that $R/I$ has finite length.

\separate

We close this section by applying Theorem~\ref{extension correspondence}
to produce a second ``converse'' to the Completely Prime Ideal
Principle~\ref{CPIP}, distinct from the one mentioned at the end
of~\S\ref{CPIP section}.
This result mildly strengthens the CPIP to an ``iff'' statement, saying
that a right ideal $P$ of a ring $R$ is completely prime iff
$P\in\Max(\F')$ for some right Oka family $\F$.
(This was already noted in the commutative case in~\cite[p.~274]{LR2}.)

Let $V_{R}$ be an $R$-module, and define the class 
\begin{equation}
\label{E[V]}
\E[V] := \{ M_{R} : M=0 \text{ or } M \not\hookrightarrow V \}.
\end{equation}
We claim that $\E[V]$ is closed under extensions in $\M_R$. Indeed,
suppose that $0\to L\to M\to N\to 0$ is a short exact sequence in $\M_R$
with $L,N\in\E[V]$. If $L=0$ then $M\cong N$, so that $M\in\E[V]$.
Otherwise $L$ cannot embed in $V$. Because $L\hookrightarrow M$, $M$
cannot embed in $V$, proving $M\in\M_R$. With the class $\E[V]$ in
mind, we prove the second ``converse'' of the CPIP.

\begin{proposition}
\label{CPIP converse}
For any completely prime right ideal $P_R\subseteq R$, there exists an
Oka family $\F$ of right ideals in $R$ such that $P\in \Max(\F')$.
\end{proposition}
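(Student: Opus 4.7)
The plan is to exhibit $\F$ explicitly using the class $\E[V]$ introduced above, taking $V := R/P$. More precisely, set
\[
\C := \E[R/P] \cap \M_R^c = \{M_R \text{ cyclic} : M = 0 \text{ or } M \not\hookrightarrow R/P\},
\]
and let $\F := \F_{\C}$ be the corresponding family of right ideals as in Proposition~\ref{similarity bijection}. The first task is to check that $\C$ is a class of cyclic modules closed under extensions in the sense of Definition~\ref{closed under extensions definition}: this is immediate from the fact (remarked just before the statement) that $\E[R/P]$ is closed under extensions in $\M_R$, since any cyclic extension of two members of $\C$ is in $\E[R/P]$ and is cyclic. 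By Theorem~\ref{extension correspondence}, $\F$ is therefore a right Oka family.

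Next I would check that $P \in \F'$: this holds because $R/P$ embeds into itself by the identity, so $R/P \notin \E[R/P]$, hence $P \notin \F$.

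The main step is to verify the maximality $P \in \Max(\F')$. Let $I \supsetneq P$ be any right ideal; I must show $I \in \F$, i.e., $R/I \in \C$. If $I = R$ then $R/I = 0 \in \C$. Otherwise $P \subsetneq I \subsetneq R$, and it suffices to show that $R/I$ does not embed in $R/P$. Suppose for contradiction that there is an injection $f\colon R/I \hookrightarrow R/P$; writing $f(1+I) = a+P$, the well-definedness and injectivity of $f$ are equivalent, respectively, to $aI \subseteq P$ and $a^{-1}P \subseteq I$. Combined, these yield $I = a^{-1}P$. In particular $aP \subseteq aI \subseteq P$, and $a \notin P$ (else $f = 0$, which is not injective since $R/I \neq 0$). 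By the definition of a completely prime right ideal (Definition~\ref{completely prime definition}), $aP \subseteq P$ and $a \notin P$ force $a^{-1}P = P$. Thus $I = P$, contradicting $I \supsetneq P$.

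The only nontrivial step is the maximality argument, and its entire content is precisely the translation of the complete primality of $P$ into the non-embedding statement for proper quotients of $R/P$; everything else is a formal application of the extension correspondence and of the observation that $\E[R/P]$ is extension-closed.
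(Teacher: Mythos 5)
Your proof is correct and follows essentially the same route as the paper: the identical family $\F=\F_{\C}$ with $\C=\E[R/P]\cap\M_R^c$, with Theorem~\ref{extension correspondence} supplying the Oka property and the non-embedding of proper quotients of $R/P$ giving maximality. The only cosmetic difference is in the maximality step, where the paper composes $R/P\twoheadrightarrow R/I\hookrightarrow R/P$ and invokes Proposition~\ref{completely prime characterization}(3), while you unwind the embedding to an element $a$ with $I=a^{-1}P$ and apply Definition~\ref{completely prime definition} directly --- the same content.
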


\begin{proof}
Let $V_{R}=R/P$, and let $\E[V]$ be as above. Fixing the class
$\C = \E[V]\cap \M_R^c$, set $\F:=\F_{\C}$. By Theorem~\ref{extension correspondence},
$\F$ is a right Oka family. Certainly $P\notin\F$ since $R/P=V\notin\E[V]$,
so it only remains to show the maximality of $P$. Assume for contradiction
that there is a right ideal $I\notin \F$ with $I\supsetneq P$. Then we have
a natural surjection $R/P\twoheadrightarrow R/I$, and because $I\notin\F$
we have $0\neq R/I\hookrightarrow V=R/P$. Composing these maps as 
\[
R/P\twoheadrightarrow R/I\hookrightarrow R/P
\]
gives a nonzero endomorphism $f\in \End(R/P)$ with $\ker f=I/P\neq 0$.
This contradicts characterization~(3) of Proposition~\ref{completely prime characterization},
so we must have $P\in\Max (\F')$ as desired.
\end{proof}

\section{Applications of the Completely Prime Ideal Principle}
\label{applications section}

In this section we will give various applications of the Completely Prime
Ideal Principle. Every application should be viewed as a new source
of completely prime right ideals in a ring or as an application of the notion
of completely prime right ideals (and right Oka families) to study the
one-sided structure of a ring.
The diversity of concepts that interweave with the notion of completely
prime right ideals (via right Oka families) in this section showcases the ubiquity of these objects.
We remind the reader that when verifying that a set
$\F$ of right ideals in $R$ is a right Oka family, we will often skip the
step of checking that $R\in\F$.

\begin{remark}
\label{creating Oka families}
An effective method of creating right Oka families is as follows.
Consider a subclass $\E\subseteq \M_R$ that is closed under extensions
in the full class of right modules $\M_R$. Then $\C=\E\cap \M_R^c$ is a
class of cyclic modules that is closed under extensions. Hence $\F:=\F_{\C}$
is a right Oka family. (Notice that, according to Lemma~\ref{full closure under extensions},
\emph{every} such $\C$ arises this way.)

When working relative to a ring homomorphism, a similar method applies.
Recall that for a ring $k$, a \emph{$k$-ring} $R$ is a ring with a fixed
homomorphism $k \to R$. Given a $k$-ring $R$, let $\E_1$ be any class of
right $k$-modules that is closed under extensions in $\M_k$, and let $\E$
denote the subclass of $\M_R$ consisting of modules that lie in $\E_1$ when
considered as $k$-modules under the map $k \to R$. Then $\E$ is certainly
closed under extensions in $\M_R$, so $\C:=\E\cap \M_R^c$ is closed under
extensions and $\F:=\F_{\C}$ is a right Oka family. 
\end{remark}

\subsection{Point annihilators and zero-divisors}
Point annihilators are basic objects from commutative algebra that
connect the modules over a commutative ring to the ideals of that
ring. Prime ideals play an important role there in the form of
associated primes of a module. Here we study these themes in the
setting of noncommutative rings.

\begin{definition}
For a ring $R$ and a module $M_R\neq 0$, a \emph{point annihilator} of 
$M$ is a right ideal of the form $\ann(m)$ for some $0\neq m\in M$. 
\end{definition}

A standard theorem of commutative algebra states that for a module $M_R$ over
a commutative ring $R$, a maximal point annihilator of $M$ is a prime ideal.
The next result is the direct generalization of this fact. This application
takes advantage of the construction $\E[V]$ presented in~\eqref{E[V]}.

\begin{proposition}
\label{point annihilators} 
Let $R$ be a ring and $M_{R}\neq 0$ an $R$-module. The family $\F$ of right
ideals that are not point annihilators of $M$ is a right Oka family. Thus, a
maximal point annihilator of $M$ is a completely prime right ideal.
\end{proposition}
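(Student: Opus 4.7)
The plan is to recognize this family as one of the ``extension-closed'' families constructed in Remark~\ref{creating Oka families}, and then invoke the Completely Prime Ideal Principle directly.

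First I would reformulate membership in $\F$ in module-theoretic terms. A right ideal $I$ fails to lie in $\F$ precisely when $I = \ann(m)$ for some nonzero $m \in M$, which happens iff the cyclic submodule $mR \cong R/\ann(m) = R/I$ sits as a nonzero submodule of $M$. Equivalently, $I \in \F$ iff $R/I = 0$ or $R/I$ does not embed in $M$. Using the class $\E[V]$ defined in~\eqref{E[V]} with $V = M$, this is exactly the statement that $R/I \in \E[M]$; in the notation of Proposition~\ref{similarity bijection}, $\F = \F_{\C}$ for $\C := \E[M] \cap \M_R^c$.

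Next I would verify that $\C$ is closed under extensions in $\M_R^c$. The discussion immediately preceding Proposition~\ref{CPIP converse} already shows that $\E[M]$ is closed under extensions in the full module class $\M_R$: for a short exact sequence $0 \to L \to N \to N/L \to 0$ with $L, N/L \in \E[M]$, either $L = 0$ (so $N \cong N/L \in \E[M]$) or $L \not\hookrightarrow M$, in which case the embedding $L \hookrightarrow N$ forbids $N$ from embedding in $M$. Intersecting with $\M_R^c$ preserves this closure property, so $\C$ is a class of cyclic modules closed under extensions in the sense of Definition~\ref{closed under extensions definition}.

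By Theorem~\ref{extension correspondence}, $\F = \F_{\C}$ is then a right Oka family. The Completely Prime Ideal Principle~\ref{CPIP} immediately yields that any $I \in \Max(\F')$ is completely prime; since $\F'$ is precisely the set of point annihilators of $M$, this gives the conclusion. There is no real obstacle here: the only thing to watch carefully is the bookkeeping around $R/I = 0$ in the definition of $\E[M]$, ensuring that the unit ideal $R$ is included in $\F$ so that the Oka family axiom $R \in \F$ is satisfied.
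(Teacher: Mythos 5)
Your proof is correct and follows essentially the same route as the paper: identify $\F$ with $\F_{\C}$ for $\C = \E[M]\cap\M_R^c$, use closure under extensions of $\E[M]$ together with Theorem~\ref{extension correspondence} to get the right Oka property, and finish with the CPIP. The identification $\F' = \{\ann(m) : 0\neq m\in M\} = \{I_R \subseteq R : 0\neq R/I \hookrightarrow M\}$ is exactly the paper's argument, so there is nothing to add.
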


\begin{proof}
Following the notation of~\eqref{E[V]}, let $\C=\E[M] \cap \M_{R}^{c}$,
which is a class of cyclic modules closed under extensions. Then
$\F_{\C}$ is a right Oka family. But by definition of $\E[M]$, we see
that
\begin{align*}
\F_{\C}^{\prime} &= \{ I_R \subseteq R : 0 \neq R/I \hookrightarrow M \} \\
&= \{ \ann(m) : 0 \neq m \in M \} \\
&= \F'.
\end{align*}
So $\F=\F_{\C}$ is a right Oka family. The last statement follows
from the CPIP~\ref{CPIP}.
\end{proof}

The proof that a maximal point annihilator of a module $M_R$ is completely
prime can also be achieved using the following family:
\[
\F := \{ I_R \subseteq R : \text{for $m\in M$, } mI=0 \implies m=0 \}.
\]
One can show that $\F$ is a right Oka family. Moreover, it is readily
checked that $\Max(\F')$ consists of the maximal point annihilators of $M$.
The CPIP again applies to show that the maximal point annihilators of
$M$ are completely prime. This was essentially the approach taken in the
commutative case in~\cite[Prop.~3.5]{LR}.

As in the theory of modules over commutative rings, one may wish to study
``associated primes'' of a module $M$ over a noncommutative ring $R$. For
a module $M_R$, let us say that a completely prime right ideal $P_R\subseteq R$
is \emph{associated to $M$} if it is a point annihilator of $M$ (equivalently,
if $R/P\hookrightarrow M$). A famous fact from commutative algebra is that
a noetherian module over a commutative ring has only finitely many associated primes;
see~\cite[Thm.~3.1]{Eisenbud}. It is easy to show that the analogous statement
for completely prime right ideals does not hold over noncommutative rings.
For instance, Example~\ref{similar maximal example} provided a ring $R$ with
infinitely many maximal right ideals $\{\m_i\}$ such that the modules $R/\m_i$
were all isomorphic to the same simple module, say $S_R$. Then the $\m_i$ are
infinitely many completely prime right ideals that are associated to the module
$S$ (which is simple and thus noetherian).

In response to this easy example, one may ask whether a noetherian module
has finitely many associated completely prime right ideals \emph{up to similarity}.
Again, the answer is negative. We recall an example used by K.\,R.~Goodearl in~\cite{Goodearl}
to answer a question by Goldie. Let $k$ be a field of characteristic zero and let $D$
be the derivation on the power series ring $k[[y]]$ given by $D=y\frac{d}{dy}$.
Define $R:=k[[y]][x;D]$, a skew polynomial extension. Consider the right module $M_R=R/xR$. Notice that
$M\cong k[[y]]$ as a module over $k[[y]]$. Goodearl showed that the nonzero submodules of $M$ are
precisely the $\bar{y}^i R\cong y^i k[[y]]$ (where $\bar{y}^i=y^i+xR\in M$) and that these submodules are pairwise
nonisomorphic. From the fact that each of these submodules has infinite $k$-dimension
and finite $k$-codimension in $M$, one can easily verify that $M$ (and its nonzero
submodules) are monoform (as in Definition~\ref{(co)monoform definition}). So the
right ideals $\ann(\bar{y}^i)$ are comonoform and thus are completely prime by
Proposition~\ref{comonoform is completely prime} to be proved later. But they are
pairwise nonsimilar because the factor modules $R/\ann(\bar{y}^i)\cong \bar{y}^i R$
are pairwise nonisomorphic.

(In spite of this failure of finiteness, interested readers should note that
O.~Goldman developed a theory of associated primes of noncommutative rings
in which every noetherian module has finitely many associated primes;
see~\cite[Thm.~6.14]{Goldman}. We will not discuss Goldman's prime torsion
theories here but will simply remark that they are related to monoform modules
and comonoform right ideals, which are discussed in the the next section.
See, for instance,~\cite{Storrer}.)

The following is an application of Proposition~\ref{point annihilators}.
For a nonzero module $M_R$, we define the \emph{zero-divisors of $M$} in
$R$ to be the set of all $z\in R$ such that $mz=0$ for some $0\neq m\in M$.
A theorem from commutative algebra states that the set of zero-divisors of
a module over a commutative ring $R$ is equal to the union of
some set of prime ideals. Here we generalize this fact to noetherian right
modules over noncommutative rings.

\begin{corollary}
Let $M_R$ be a module over a ring $R$ such that $R$ satisfies the ACC on point
annihilators of $M$ (e.g., if $M_R$ or $R_R$ is noetherian). Then the set of
zero-divisors of $M$ is a union of completely prime right ideals.
\end{corollary}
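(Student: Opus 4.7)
The plan is to show that every zero-divisor of $M$ lies in some maximal point annihilator, and invoke Proposition~\ref{point annihilators} to conclude that these maximal point annihilators are completely prime right ideals contained in the zero-divisor set. First I would let $Z \subseteq R$ denote the set of zero-divisors of $M$ and observe the two trivial inclusions: on one hand, if $I = \ann(m)$ is any point annihilator of $M$ (with $0 \neq m \in M$), then every $r \in I$ satisfies $mr = 0$ with $m \neq 0$, so $I \subseteq Z$; on the other hand, any $z \in Z$ lies in $\ann(m)$ for some $0 \neq m \in M$ by definition of a zero-divisor, so $z$ is contained in at least one point annihilator.

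Next I would use the hypothesis that $R$ satisfies ACC on point annihilators of $M$ to upgrade this: starting from $z \in \ann(m)$, the ACC guarantees that the (nonempty) set of point annihilators of $M$ containing $\ann(m)$ has a maximal element $P$, which is automatically a maximal point annihilator of $M$. (Note both the stated sufficient conditions imply this ACC: if $M_R$ is noetherian then its submodules, in particular its cyclic submodules $R/\ann(m)$, satisfy DCC on submodules, forcing ACC on the $\ann(m)$; and if $R_R$ is noetherian the statement is immediate.) By Proposition~\ref{point annihilators}, $P$ is a completely prime right ideal, and by the first paragraph $P \subseteq Z$.

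Putting these steps together, for every $z \in Z$ I have produced a completely prime right ideal $P$ with $z \in P \subseteq Z$, which yields
\[
Z = \bigcup \{ P : P \text{ is a maximal point annihilator of } M \},
\]
exhibiting $Z$ as a union of completely prime right ideals. The argument has no real obstacle: the only nontrivial ingredient is Proposition~\ref{point annihilators} (which already packages the CPIP application), and the ACC assumption exists precisely to guarantee the existence of a maximal point annihilator above each $\ann(m)$. The mild subtlety worth mentioning explicitly is the verification that the two cases ``$M_R$ noetherian'' and ``$R_R$ noetherian'' both imply the ACC on point annihilators, so that the parenthetical examples in the statement are justified.
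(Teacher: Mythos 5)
Your core argument is exactly the paper's: each zero-divisor $z$ lies in some $\ann(m)$ with $m\neq 0$, the ACC hypothesis produces a maximal point annihilator above $\ann(m)$, Proposition~\ref{point annihilators} makes that maximal point annihilator completely prime, and since every point annihilator is contained in the zero-divisor set, the union of these maximal point annihilators is precisely the zero-divisor set. That part is correct and needs no change.

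The genuine flaw is in your verification of the parenthetical case ``$M_R$ noetherian.'' You claim that the cyclic submodules $mR\cong R/\ann(m)$ of a noetherian module satisfy DCC on submodules; noetherian gives ACC, not DCC (take $M=R=\mathbb{Z}$ and $m=1$: $\mathbb{Z}$ has no DCC), so the stated reason is false. Moreover, even granting some chain condition on the submodules of a single $mR$, you have not explained how it controls an ascending chain $\ann(m_0)\subseteq\ann(m_1)\subseteq\cdots$ in which the elements $m_i$ vary: such a chain does not correspond to a descending chain of submodules of $M$, only to a sequence of surjections $m_0R\twoheadrightarrow m_1R\twoheadrightarrow\cdots$. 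The correct argument, which is the one the paper gives, is to set $I:=\ann(m_0)$ and note that every term of the chain contains $I$, while $R/I\cong m_0R\subseteq M$ is noetherian; hence $R$ satisfies ACC on right ideals containing $I$, and the chain of point annihilators stabilizes. The case ``$R_R$ noetherian'' is immediate, as you say. With this one repair your proof agrees with the paper's in every respect.
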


\begin{proof}
Let $z\in R$ be a zero-divisor of $M_R$. Then there exists $0\neq m\in M$ such that
$z\in\ann(m)$. Because $R$ satisfies ACC on point annihilators of $M$, there exists
a maximal point annihilator $P_z\subseteq R$ of $M$ containing $\ann(m)$, so that
$z\in\ann(m)\subseteq P_z$. By Proposition~\ref{point annihilators}, $P_z$ is a
completely prime right ideal. Choosing some such $P_z$ for every zero-divisor $z$ on
$M$, we see that the set of zero-divisors of $M$ is equal to $\bigcup_z P_z$.

If $R$ is right noetherian, then the ACC hypothesis is certainly satisfied. Finally,
let us assume that $M_R$ is noetherian and prove that $R$ satisfies ACC on point
annihilators of $M$. Let  $I:=\ann(m_0)\subseteq\ann(m_1)\subseteq\cdots$ be an
ascending chain of point annihilators of $M$ (where $m_i\in M\setminus\{0\}$). Notice
that $R/I\cong m_0R\subseteq M$ is a noetherian module; thus $R$ satisfies ACC on
right ideals containing $I$. It follows that this ascending chain of point annihilators
of $M$ must stabilize.
\end{proof}

Next we shall investigate conditions for a ring to be a domain.
The following fact from commutative algebra was recovered in~\cite[Cor.~3.2]{LR}:
a commutative ring $R$ is a domain iff every nonzero prime ideal of $R$ contains
a regular element. We generalize this result through a natural progression of
ideas, starting with another application of Proposition~\ref{point annihilators}.
Given a ring $R$, we will use the term \emph{right principal annihilator}
to mean a right ideal of the form $I=\ann_r(x)$ for some $x\in R\setminus\{0\}$.
This is just another name for a point annihilator of the module $R_R$, but we
use this term below to evoke the idea of chain conditions on annihilators. Also, by a
\emph{left regular element} of $R$ we mean an element $s\in R$ such that $\ann_{\ell}(s)=0$.

\begin{proposition}
For any nonzero ring $R$, the following are equivalent:
\begin{itemize}
\item[\textnormal (1)] $R$ is a domain;
\item[\textnormal (2)] $R$ satisfies ACC on right principal annihilators, and
for every nonzero completely prime right ideal $P$ of $R$, $P$ is not a right
principal annihilator;
\item[\textnormal (3)] $R$ satisfies ACC on right principal annihilators, and
every nonzero completely prime right ideal of $R$ contains a left regular element.
\end{itemize}
\end{proposition}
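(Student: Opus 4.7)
The plan is to establish the cycle $(1) \Rightarrow (3) \Rightarrow (2) \Rightarrow (1)$, with the Completely Prime Ideal Principle doing all the real work in the last step.

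The implication $(1) \Rightarrow (3)$ is essentially vacuous: in a domain, $\ann_r(x) = 0$ for every nonzero $x$, so $\{0\}$ is the only right principal annihilator and the ACC is automatic. Any nonzero right ideal (in particular any nonzero completely prime right ideal) contains a nonzero element, which is automatically left regular.

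For $(3) \Rightarrow (2)$, I would argue by contradiction: if a nonzero completely prime right ideal $P$ satisfied $P = \ann_r(x)$ for some $x \neq 0$, then a left regular $s \in P$ (supplied by (3)) would give $xs = 0$, forcing $x \in \ann_\ell(s) = 0$, a contradiction.

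The substantive direction is $(2) \Rightarrow (1)$. Applying Proposition~\ref{point annihilators} to the module $M = R_R$ yields a right Oka family $\F$ whose complement is precisely
\[
\F' = \{\, \ann_r(x) : x \in R \setminus \{0\}\,\},
\]
the set of right principal annihilators. The ACC assumed in (2) is exactly the ACC on $\F'$, so every element of $\F'$ lies beneath some maximal element of $\F'$. By the CPIP (Theorem~\ref{CPIP}), any such maximal element is a completely prime right ideal, and because it belongs to $\F'$ it is also a right principal annihilator. Hypothesis (2) then forces each such maximal element to equal the zero right ideal. Consequently every right principal annihilator is $\{0\}$, i.e., $\ann_r(x) = 0$ for every nonzero $x$, so $R$ is a domain.

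The one point to check carefully is that ACC on right principal annihilators does guarantee the existence of a maximal element of $\F'$ \emph{above any given element}, so that the CPIP has something to latch onto; this is immediate from the definition of ACC, but it is the linchpin that makes the Oka-family framework productive here and converts the seemingly weak restriction on completely prime right ideals in (2) into a global statement about all right principal annihilators.
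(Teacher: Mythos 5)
Your proof is correct and follows essentially the same route as the paper's: both reduce everything to (2)$\implies$(1) and use the right Oka family of right ideals that are not point annihilators of $R_R$ (Proposition~\ref{point annihilators}) together with the ACC hypothesis. The only cosmetic difference is that you apply the CPIP~\ref{CPIP} directly to maximal elements of $\F'$, whereas the paper invokes the CPIP Supplement~\ref{CPIP supplement}(2), whose proof is precisely that maximal-element argument.
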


\begin{proof}
Certainly (1)$\implies$(3)$\implies$(2), so it suffices to show (2)$\implies$(1).
Let $R$ be as in~(2), and let $\F$ be the family of right ideals of $R$ which
are not point annihilators of the module $R_R$. Then $\F$ is a right Oka family
by Proposition~\ref{point annihilators}. Because every point annihilator of
$R_R$ is a right principal annihilator, the first hypothesis shows that $\F'$
has the ascending chain condition. Furthermore, the second assumption shows
that any nonzero completely prime right ideal of $R$ lies in $\F$. By the CPIP
Supplement~\ref{CPIP supplement}(2), all nonzero right ideals lie in $\F$. It
follows that every nonzero element of $R$ has zero right annihilator, proving
that $R$ is a domain.
\end{proof}

A simple example demonstrates that the chain condition is in fact necessary
for (1)$\iff$(2) above. Indeed, let $k$ be a field and let $R$ be the commutative
$k$-algebra generated by $\{x_i : i\in\mathbb{N}\}$ with relations $x_i^2=0$.
Clearly $R$ is not a domain, but its unique prime ideal $(x_0, x_1, x_2, \dots )$
is not a principal annihilator.

This leaves us with the following question: If every completely prime right
ideal of a ring contains a left regular element, then is $R$ a domain?
Professor G.~Bergman has answered this question in the affirmative. With his
kind permission, we present a modified version of his argument below.

\begin{lemma}
\label{non local point annihilators}
For a ring $R$ and a module $M_R$, let $\F$ be the family of right ideals
$I$ of $R$ such that there exists a nonempty finite subset $X\subseteq I$
such that, for all $m\in M$, $mX=0 \implies m=0$. The family $\F$ is a
right Oka family.
\end{lemma}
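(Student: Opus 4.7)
The plan is to verify the two defining conditions of a right Oka family directly: $R \in \F$ is trivial by taking the witness $X = \{1\}$, so the real work is to check the Oka closure property~\eqref{Oka property}. Given $I_R \subseteq R$ and $a \in R$ with $I + aR \in \F$ and $a^{-1}I \in \F$, I would extract finite witnesses $Y \subseteq I + aR$ and $Z \subseteq a^{-1}I$ and combine them to manufacture a finite witness $X \subseteq I$ certifying $I \in \F$.

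For each $y \in Y$, decompose $y = i_y + a r_y$ with $i_y \in I$ and $r_y \in R$, and set
\[
X := \{ i_y : y \in Y \} \cup \{ a z : z \in Z \}.
\]
Since $z \in a^{-1}I$ means $a z \in I$, the set $X$ is a nonempty finite subset of $I$. The key verification is that for any $m \in M$, $mX = 0$ forces $m = 0$. The argument proceeds in two steps using the witnesses in the expected order: first, from $m(az) = 0$ for all $z \in Z$, i.e., $(ma)Z = 0$, the witness property of $Z$ for the ideal $a^{-1}I$ yields $ma = 0$; second, $m i_y = 0$ and $ma = 0$ together give $m y = m i_y + (ma) r_y = 0$ for every $y \in Y$, so $mY = 0$, and the witness property of $Y$ for $I + aR$ then forces $m = 0$.

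The only technical subtlety is that the two witnesses test the vanishing of $m$ via different elements ($m$ acting on elements of $I$ versus $ma$ acting on elements of $R$), so the construction of $X$ must simultaneously encode both tests inside a single finite subset of $I$; this is the reason for taking the union above and for including the products $az$ (rather than the $z$ themselves, which need not lie in $I$). I do not foresee any serious obstacle beyond this bookkeeping, since the Oka property was essentially designed to accommodate exactly this type of two-step reduction, and no chain conditions or module-theoretic hypotheses are required.
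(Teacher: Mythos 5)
Your proof is correct and follows essentially the same argument as the paper: decompose the witnesses for $I+aR$ as $i_y+ar_y$, take $X$ to be the $i_y$ together with the products $az$ for $z$ in the witness for $a^{-1}I$, and deduce $m=0$ in the same two steps (first $ma=0$, then $mY=0$). No gaps.
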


\begin{proof}
To see that $R\in\F$, simply take $X=\{1\}\subseteq R$. Now suppose that
$I_R\subseteq R$ and $a\in R$ are such that $I+aR,\ a^{-1}I\in\F$. Choose
nonempty subsets $X_0=\{i_1+ar_1,\dots,i_p+ar_p\}\subseteq I+aR$ (where each
$i_k\in I$) and $X_1=\{x_1,\dots, x_q\}\subseteq a^{-1}I$ such that, for
$m\in M$, $mX_j=0$ implies $m=0$ (for $j=0,1$). Define
\[
X := \{ i_1, \dots, i_p, ax_1, \dots, ax_q \} \subseteq I.
\]
Suppose that $mX=0$ for some $m\in M$. Then $maX_1\subseteq mX=0$ implies
that $ma=0$. It follows that $mX_0=0$, from which we conclude $m=0$. This
proves that $I\in\F$, hence $\F$ is right Oka.
\end{proof}

\begin{proposition}
\label{torsionfree characterization}
For a module $M_R\neq 0$ over a ring $R$, the following are equivalent:
\begin{itemize}
\item[\textnormal (1)] $M$ has no zero-divisors (i.e., $0\neq m\in M$ and
$0\neq r\in R$ imply $mr\neq 0$);
\item[\textnormal (2)] Every nonzero completely prime right ideal of $R$
contains a non zero-divisor for $M$;
\item[\textnormal (3)] Every nonzero completely prime right ideal $P$ of $R$
has a nonempty finite subset $X\subseteq P$ such that, for all $m\in M$,
$mX=0 \implies m=0$.
\end{itemize}
\end{proposition}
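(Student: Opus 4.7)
The plan is to prove the cycle $(1) \implies (2) \implies (3) \implies (1)$, where the first two implications are essentially immediate and the main content lies in $(3) \implies (1)$, which will be established via Lemma~\ref{non local point annihilators} together with the CPIP Supplement~\ref{CPIP supplement}.

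For $(1) \implies (2)$, if $M$ has no zero-divisors then every nonzero element of $R$ is a non zero-divisor for $M$, and any nonzero completely prime right ideal contains some nonzero element. For $(2) \implies (3)$, given a nonzero completely prime right ideal $P$ and a non zero-divisor $r \in P$ for $M$, the singleton $X = \{r\}$ witnesses (3): $mX = 0$ amounts to $mr = 0$, forcing $m = 0$.

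For the crucial implication $(3) \implies (1)$, I would introduce the right Oka family $\F$ from Lemma~\ref{non local point annihilators}, consisting of right ideals $I$ that contain some nonempty finite subset $X$ with $mX = 0 \implies m = 0$ for all $m \in M$. By construction, hypothesis~(3) says precisely that every nonzero completely prime right ideal of $R$ lies in $\F$. Next, I would verify the chain-condition hypothesis of the CPIP Supplement: if $I_1 \subseteq I_2 \subseteq \cdots$ is an ascending chain in $\F'$, then $I := \bigcup_n I_n$ again lies in $\F'$, because any finite witness $X \subseteq I$ of membership in $\F$ would already be contained in some $I_n$, contradicting $I_n \in \F'$.

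Now I would apply Theorem~\ref{CPIP supplement}(2) with $J = 0$: since every completely prime right ideal properly containing $0$ lies in $\F$, so does every right ideal properly containing $0$. In particular, for arbitrary $0 \neq r \in R$ the nonzero right ideal $rR$ lies in $\F$, so there exist $s_1, \dots, s_k \in R$ with the property that $mrs_i = 0$ for every $i$ forces $m = 0$. Consequently, if some $m \in M$ satisfied $mr = 0$, then $m (rs_i) = 0$ for all $i$, hence $m = 0$; this shows that $r$ is a non zero-divisor for $M$ and gives~(1). The main subtlety is recognizing that the finite-subset formulation of $\F$ is exactly what supplies both the ascending-chain condition on $\F'$ (despite the absence of any noetherian hypothesis) and a usable pointwise conclusion once the Supplement is invoked.
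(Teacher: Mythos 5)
Your proposal is correct and follows essentially the same route as the paper: the easy implications (1)$\implies$(2)$\implies$(3), then (3)$\implies$(1) via the right Oka family of Lemma~\ref{non local point annihilators}, the observation that unions of chains in $\F'$ stay in $\F'$ (your argument for ascending sequences works verbatim for arbitrary chains, since a finite witness lies in a single member of the chain), and the CPIP Supplement~\ref{CPIP supplement}(2) with $J=0$. The only cosmetic difference is the last step, where you apply membership of $rR$ in $\F$ while the paper notes that no right ideal in $\F$ can be a point annihilator of $M$; these are interchangeable one-line conclusions.
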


\begin{proof}
Clearly (1)$\implies$(2)$\implies$(3); we prove (3)$\implies$(1). Assume
that~(3) holds, and let $\F$ be the Oka family of right ideals defined
in Lemma~\ref{non local point annihilators}. It is easy to check that the
union of any chain of right ideals in $\F'$ also lies in $\F'$. By~(3),
every nonzero completely prime right ideal of $R$ lies in $\F$. Then the
CPIP Supplement~\ref{CPIP supplement} implies that all nonzero right ideals
of $R$ lie in $\F$. It is clear that no right ideal in $\F$ can be a point
annihilator for $M$. It follows immediately that $M$ has no zero-divisors.
\end{proof}

\begin{corollary}
\label{domain characterization}
For a ring $R\neq 0$, the following are equivalent:
\begin{itemize}
\item[\textnormal (1)] $R$ is a domain;
\item[\textnormal (2)] Every nonzero completely prime right ideal of $R$
contains a left regular element;
\item[\textnormal (3)] Every nonzero completely prime right ideal of $R$ has
a nonempty finite subset whose left annihilator is zero.
\end{itemize}
\end{corollary}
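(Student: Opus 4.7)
The plan is to deduce Corollary~\ref{domain characterization} as a direct specialization of Proposition~\ref{torsionfree characterization} to the module $M_R := R_R$. Since we are given $R \neq 0$, the right regular module $M = R_R$ is nonzero, so the proposition applies without further hypotheses.

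The work reduces to translating each of the three conditions of Proposition~\ref{torsionfree characterization} into the language of Corollary~\ref{domain characterization} when $M = R_R$. First, the assertion that $M = R_R$ has no zero-divisors unfolds as: for $m, r \in R$, $m \neq 0$ and $r \neq 0$ imply $mr \neq 0$, which is precisely the statement that $R$ is a domain, matching~(1). Next, a non zero-divisor for $R_R$ is an element $s \in R$ such that $ms = 0$ forces $m = 0$ for every $m \in R$; this says exactly $\ann_{\ell}(s) = 0$, i.e., $s$ is left regular, so condition~(2) of the proposition becomes condition~(2) of the corollary. Finally, a nonempty finite subset $X \subseteq P$ with the property that $mX = 0$ implies $m = 0$ for $m \in R$ is precisely a finite subset of $P$ whose left annihilator in $R$ is zero, matching~(3).

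With these translations, the three conditions (1)--(3) of the corollary coincide with the three conditions of Proposition~\ref{torsionfree characterization} for $M = R_R$, and the desired equivalences transfer immediately from the proposition. I do not anticipate a genuine obstacle: all of the substantive content (the construction of the right Oka family from Lemma~\ref{non local point annihilators} and the application of the CPIP Supplement~\ref{CPIP supplement}) has already been carried out in proving Proposition~\ref{torsionfree characterization}. The only task remaining is to record these translations carefully so that the reader sees the corollary is the correct ring-theoretic specialization.
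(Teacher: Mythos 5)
Your proposal is correct and is exactly the intended derivation: the paper states the corollary without proof precisely because it is the specialization of Proposition~\ref{torsionfree characterization} to $M = R_R$, with ``non zero-divisor for $R_R$'' unwinding to ``left regular element'' and the finite-subset condition unwinding to ``finite subset with zero left annihilator,'' just as you record.
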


Here is another demonstration that completely prime right ideals control the
structure of a ring better than the ``extremely prime'' right ideals
(discussed in~\S\ref{completely prime section}). Using
Proposition~\ref{no extremely primes} we constructed rings with no
extremely prime right ideals that are not domains. But it is vacuously true
that every extremely prime right ideal of such a ring contains a regular element.
Thus there is no hope that the result above could be achieved using this more
sparse collection of one-sided primes.

\subsection{Homological properties}
Module-theoretic properties that are preserved under extensions arise very
naturally in homological algebra. This provides a rich supply of right Oka families,
and consequently produces completely prime right ideals via the CPIP.

\begin{example}
\label{proj/inj/flat example}
For a ring $k$ and a $k$-ring $R$, consider the following properties of a right ideal
$I_R\subseteq R$ (which are known to be preserved by extensions of the factor module):
\begin{enumerate}
\item $R/I$ is a projective right $k$-module;
\item $R/I$ is an injective right $k$-module;
\item $R/I$ is a flat right $k$-module.
\end{enumerate}
For each property above, the family $\F$ of all right ideals with that property
is a right Oka family (by Remark~\ref{creating Oka families}); hence $\Max(\F')$
consists of completely prime right ideals.
\end{example}

We have the following immediate application, which includes a criterion
for a ring to be semisimple.

\begin{proposition}
\label{family of direct summands}
The family $\F$ of right ideals that are direct summands of $R_R$ is a
right Oka family. A right ideal $I_{R}\subseteq R$ maximal with respect
to not being a direct summand of $R$ is a maximal right ideal.
A ring $R$ is semisimple iff every maximal right ideal of $R$ is a
direct summand of $R_R$.
\end{proposition}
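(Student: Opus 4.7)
The plan is to handle the three claims in turn. For the first, I would observe that a right ideal $I\subseteq R$ is a direct summand of $R_R$ exactly when the canonical surjection $R\twoheadrightarrow R/I$ splits, that is, when $R/I$ is a projective right $R$-module. The class of projective right $R$-modules is closed under extensions in $\M_R$ (any short exact sequence ending in a projective splits, so the middle term decomposes as the direct sum of the outer two). Hence by Remark~\ref{creating Oka families} (the case $k=R$ of item~(1) in Example~\ref{proj/inj/flat example}), the intersection with $\M_R^c$ is a class of cyclic modules closed under extensions, and the corresponding family $\F_{\C}=\F$ is right Oka by Theorem~\ref{extension correspondence}.

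For the second claim, any $I\in\Max(\F')$ is completely prime by the CPIP~\ref{CPIP}, so $R/I$ is indecomposable by Corollary~\ref{indecomposable lemma}. Assume toward a contradiction that $I$ is not a maximal right ideal, and pick $I\subsetneq J\subsetneq R$. Then $J\in\F$ by maximality of $I$ in $\F'$, so $R/J$ is projective and the surjection $R/I\twoheadrightarrow R/J$ splits. This yields a decomposition $R/I\cong J/I\oplus M$ with $M\cong R/J\neq 0$, and indecomposability of $R/I$ then forces $J/I=0$, contradicting $I\subsetneq J$.

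For the semisimplicity criterion, the forward implication is immediate from the standard characterization of semisimplicity as every right ideal being a direct summand. The natural tool for the converse would be the CPIP Supplement~\ref{CPIP supplement}, but the required chain-condition on $\F'$ is not transparent for the family of direct summands, so I would instead argue directly using the socle. Set $S:=\soc(R_R)$, and suppose toward a contradiction that $S\neq R$. Pick a maximal right ideal $\m\supseteq S$; by hypothesis $R=\m\oplus N$ for some right ideal $N$, and maximality of $\m$ forces $N\cong R/\m$ to be simple. Then $N\subseteq\soc(R_R)=S\subseteq\m$ combined with $N\cap\m=0$ yields $N=0$, contradicting $N\cong R/\m\neq 0$. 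Hence $\soc(R_R)=R$, which says precisely that $R_R$ is a sum of simple submodules, i.e., $R$ is semisimple. The main subtlety is the decision in part~(3) to bypass the CPIP Supplement; the socle extraction sidesteps the missing chain-closure condition on $\F'$ by producing a simple summand directly from each maximal right ideal.
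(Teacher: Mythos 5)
Your argument is correct, and the first two claims are handled essentially as in the paper: the identification of $\F$ with the family $\{I : R/I \text{ projective}\}$ from Example~\ref{proj/inj/flat example} (with $k=R$), then CPIP plus Corollary~\ref{indecomposable lemma} for maximality of members of $\Max(\F')$ (the paper phrases the last step as ``$R/P$ is semisimple and indecomposable, hence simple,'' while you split the single surjection $R/I\twoheadrightarrow R/J$; this is a negligible difference). Where you genuinely diverge is the semisimplicity criterion. The paper's official proof stays inside the Oka framework: it shows that the hypothesis forces every completely prime right ideal $P$ to be maximal (if $\m\supseteq P$ is maximal, then $\m$ is a proper summand of $R_R$, so $\m/P$ is a proper summand of the indecomposable module $R/P$, whence $P=\m$), so $\setS\subseteq\F$, and then invokes the CPIP Supplement~\ref{CPIP supplement}(3). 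Your stated reason for avoiding the Supplement --- that the chain condition on $\F'$ is ``not transparent'' --- is not actually an obstruction: every direct summand of $R_R$ has the form $eR$ for an idempotent $e$, so every member of $\F$ is principal, hence finitely generated, and the parenthetical hypothesis of Theorem~\ref{CPIP supplement} is satisfied. Your socle argument (take $S=\soc(R_R)$, embed it in a maximal right ideal $\m$, split off a simple complement $N\cong R/\m$, and derive $N\subseteq S\cap N\subseteq \m\cap N=0$) is exactly the ad hoc alternative the paper records immediately after its proof; it is more elementary and bypasses the CPIP machinery entirely, at the cost of not illustrating the Supplement, which is the point the paper is trying to make with this proposition.
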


\begin{proof}
This family $\F$ is readily seen to be equal to the family given in
Example~\ref{proj/inj/flat example} (with $k=R$ and the identity map
$k\to R$), and thus it is right Oka.
Let $P\in \Max(\F')$. Then $P$ is completely prime, so $R/P$ is
indecomposable by Corollary~\ref{indecomposable lemma}. On the other
hand, because every right ideal properly containing $P$ splits in $R_R$, the
module $R/P$ is semisimple. It follows that $R/P$ is simple, so $P$ is maximal
as claimed.

The nontrivial part of the last sentence is the ``if'' direction.
Assume that every maximal right ideal of $R$ is a
direct summand. It suffices to show that every completely prime right
ideal of $R$ is maximal. (For if this is the case, then every completely
prime right ideal will lie the right Oka family $\F$.
Now $\F$ consists of principal---hence f.g.---right ideals by the classical
fact that $\F=\{eR:e^2=e\in R\}$. Then the CPIP Supplement~\ref{CPIP supplement}(3)
will show that every right ideal of $R$ is a direct summand, making $R$
semisimple.)
So suppose $P_R\subsetneq R$ is completely prime. Fix a maximal right ideal
$\m$ of $R$ with $\m\supseteq P$. Because $\m$ is a proper direct summand
of $R_R$, $\m/P$ must be a proper summand of $R/P$. But $R/P$ is indecomposable
by Proposition~\ref{indecomposable lemma}. Thus $\m/P=0$, so that $P=\m$ is maximal.
\end{proof}

Of course, we can also prove the ``iff'' statement above without any reference to right
Oka families. (Suppose that every maximal right ideal of $R$ is a direct
summand. Assume for contradiction that the right socle $S_{R}:=\soc(R_R)$
is a proper right ideal. Then there is some maximal right ideal $\m\subseteq R$
such that $S\subseteq\m$. But by hypothesis there exists $V_R\subseteq R$
such that $R=V\oplus \m$. Then $V\cong R/\m$ is simple. So $V\subseteq S$,
contradicting the fact that $V\cap S\subseteq V\cap \m=0$.) Although such
ad hoc methods are able to recover this fact, our method involving the
CPIP~\ref{CPIP} has the desirable effect of fitting the result into a larger
context. Also, the CPIP and right Oka families may point one to results that
might not have otherwise been discovered without this viewpoint, even if
these results could have been proven individually with other methods.

We can also use Example~\ref{proj/inj/flat example} to recover a bit of the
structure theory of right PCI rings. A right module over a ring $R$ is called
a \emph{proper cyclic module} if it is cyclic and not isomorphic to
$R_R$. (Note that this is stronger than saying that the module is isomorphic
to $R/I$ for some $0\neq I_R\subseteq R$, though it is easy to
confuse the two notions.) A ring $R$ is a \emph{right PCI ring} if every
proper cyclic right $R$-module is injective, and such a ring $R$ is called
a \emph{proper right PCI ring} if it is not right self-injective (by a
theorem of Osofsky, this is equivalent to saying that $R$ is not semisimple).
C.~Faith showed in~\cite{Faith} that any proper right PCI ring is a simple right
semihereditary right Ore domain. In Faith's own words~\cite[p.~98]{Faith},
``The reductions to the case $R$ is a domain are long, and not entirely
satisfactory inasmuch as they are quite intricate.''  Our next application of
the Completely Prime Ideal Principle shows how to easily deduce that a proper
right PCI ring is a domain with the help of a later result on right PCI rings.

\begin{proposition}[Faith]
\label{PCI domain}
A proper right PCI ring is a domain.
\end{proposition}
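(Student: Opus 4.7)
The plan is to apply the Completely Prime Ideal Principle~\ref{CPIP} to the right Oka family
\[
\F = \{I_R \subseteq R : R/I \text{ is an injective right } R\text{-module}\}
\]
from Example~\ref{proj/inj/flat example} (with $k = R$) and to show that $0 \in \Max(\F')$. Since the condition ``$P$ is a completely prime right ideal'' at $P = 0$ is exactly the condition that $R$ be a domain, producing $0$ as a completely prime right ideal finishes the proof.

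First I would note that a proper right PCI ring is by definition not right self-injective, so $R_R$ is not injective, i.e.\ $0 \in \F'$. Next, the PCI hypothesis says that every proper cyclic right $R$-module is injective, so any right ideal $I_R$ with $R/I \not\cong R_R$ automatically satisfies $R/I$ injective and hence lies in $\F$. Thus
\[
\F' \subseteq \{0\} \cup \{I_R \subseteq R : R/I \cong R_R\}.
\]

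The main obstacle is ruling out nonzero $I$ with $R/I \cong R_R$. The standard observation here is that if $R/I \cong R_R$, then the canonical surjection $R \twoheadrightarrow R/I$ splits (since $R_R$ is projective), exhibiting $R_R = R' \oplus I$ with $R' \cong R_R$; if $I \neq 0$ this makes $R_R$ isomorphic to a proper direct summand of itself, contradicting Dedekind-finiteness. So what I really need is that a proper right PCI ring is Dedekind-finite, and this is the ``later result on right PCI rings'' I would cite: a right PCI ring is right noetherian (by the theorem of Faith/Damiano/Osofsky), and any right noetherian ring is Dedekind-finite via a standard argument on ascending chains of left annihilators.

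With this in hand, $\F' = \{0\}$, so trivially $0 \in \Max(\F')$, and the CPIP~\ref{CPIP} yields that $0$ is a completely prime right ideal of $R$. By Proposition~\ref{two-sided completely primes} (applied to the ideal $0$) this means exactly that $R$ is a domain, completing the proof.
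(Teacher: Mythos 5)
Your proposal is correct and follows essentially the same route as the paper: both use Damiano's theorem (via the Osofsky--Smith proof) to get that a right PCI ring is right noetherian, hence Dedekind-finite, so that $R/I\ncong R_R$ for every nonzero right ideal $I$, which places $0$ in $\Max(\F')$ for the right Oka family of right ideals with injective factor module, and then the CPIP together with Proposition~\ref{two-sided completely primes} gives that $R$ is a domain. The only difference is that you spell out explicitly the splitting argument behind ``Dedekind-finite implies $R/I\ncong R_R$ for $I\neq 0$,'' which the paper leaves implicit.
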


\begin{proof}
A theorem of R.\,F.~Damiano~\cite{Damiano} states that any right PCI
ring is right noetherian. (Another proof of this result, due to
B.\,L.~Osofsky and P.\,F.~Smith, appears in~\cite[Cor.~7]{OsofskySmith}.)
In particular, any right PCI ring is Dedekind-finite.

Now let $R$ be a proper right PCI ring. Because $R$ is Dedekind-finite,
for every nonzero right ideal $I$, $R/I\ncong R_R$ is a proper cyclic
module. Letting $\F$ denote the family of right ideals $I$ such that $R/I$
is injective, we have $0\in\Max(\F')$. But $\F$ is a right Oka family
by Example~\ref{proj/inj/flat example}, so the CPIP~\ref{CPIP} and
Proposition~\ref{two-sided completely primes} together show that $R$ is a domain.
\end{proof}

The astute reader may worry that the above proof is nothing more than
circular reasoning, because the proof of Damiano's theorem in~\cite{Damiano}
seems to rely on Faith's result! This would indeed be the case if Damiano's
were the only proof available for his theorem. (Specifically, Damiano cites
another result of Faith---basically~\cite[Prop.~16A]{Faith}---to conclude that
over a right PCI ring, every finitely presented proper cyclic module  has a
von Neumann regular endomorphism ring. But Faith's result is stated only for
cyclic \emph{singular} finitely presented modules. So Damiano seems to be
implicitly applying the fact that a proper right PCI ring is a right Ore
domain.) Thankfully, we are saved by the fact that Osofsky and Smith's
(considerably shorter) proof~\cite[Cor. 7]{OsofskySmith} of Damiano's result
does not require any of Faith's structure theory. 

It is worth noting that A.\,K.~Boyle had already provided a proof~\cite[Cor.~9]{Boyle}
that a right noetherian proper right PCI ring is a domain. (This was before
Damiano's theorem had been proved.) One difference between our approach and that
of~\cite{Boyle} is that we do not use any facts about direct sum decompositions of injective
modules over right noetherian rings. Of course, the proof using the CPIP is
also desirable because we are able to fit the result into a larger context in
which it becomes ``natural'' that such a ring should be a domain.

\separate

As in~\cite{LR}, we can generalize Example~\ref{proj/inj/flat example}
with items (1)--(3) below. One may think of the following examples as being
defined by the existence of certain (co)resolutions of the modules. Recall that
a module $M_R$ is said to be \emph{finitely presented} if there exists an exact
sequence of the form $R^m\to R^n\to M\to 0$, and that a \emph{finite free resolution}
of $M$ is an exact sequence of the form
\[
0 \to F_n \to \cdots \to F_1 \to F_0 \to M \to 0
\]
where the $F_i$ are finitely generated free modules.

\begin{example}
\label{proj/inj/flat dim example}
Let $R$ be a $k$-ring, and fix any one of the following properties of a right ideal $I$
in $R$ (known to be closed under extensions), where $n$ is a nonnegative integer:
\begin{itemize}
\item[(1)] $R/I$ has $k$-projective dimension~$\leq n$ (or~$<\infty$);
\item[(2)] $R/I$ has $k$-injective dimension~$\leq n$ (or~$<\infty$);
\item[(3)] $R/I$ has $k$-flat dimension~$\leq n$ (or~$<\infty$);
\item[(4)] $R/I$ is a finitely presented right $R$-modules;
\item[(5)] $R/I$ has a finite free resolution as a right $R$-module.
\end{itemize}
Then the family $\F$ of right ideals satisfying that property is right Oka (as in
Remark~\ref{creating Oka families}), and $\Max(\F')$ consists of completely prime
right ideals.
\end{example}

The families in Example~\ref{proj/inj/flat example} are just (1)--(3) above
with $n=0$. Restricting to the case $n=1$ and $k=R$, the family obtained
from part (1) (resp.\ part (3)) of Example~\ref{proj/inj/flat dim example} 
is the family of projective (resp.\ flat) right ideals of $R$ (for the latter,
see~\cite[(4.86)(2)]{Lectures}). In particular, the CPIP~\ref{CPIP} implies
that \emph{a right ideal of $R$ maximal with respect to not being projective (resp.\ flat)
over $R$ is completely prime.}
 
The family $\F$ in part~(4) above is actually equal to the family of
\emph{finitely generated} right ideals.
Indeed, if $I_R\subseteq R_R$ is finitely generated, then the module $R/I$ is
certainly finitely presented. Conversely, if $R/I$ is a finitely presented
module,~\cite[(4.26)(b)]{Lectures} implies that $I_R$ is f.g. This recovers
the last sentence of Proposition~\ref{f.g. right ideals} from a module-theoretic
perspective.

We can also use the family in~(2) above to generalize Proposition~\ref{PCI domain}
about proper right PCI rings. Given a nonnegative integer $n$, let us say that a ring
$R$ is a \emph{right $n$-PCI ring} if the supremum of the injective dimensions of all
proper cyclic right $R$-modules is equal to~$n$.
(Thus a right $0$-PCI ring is simply a right PCI ring.) Also, call a
right $n$-PCI ring $R$ \emph{proper} if $R_R$ has injective dimension greater than $n$
(possibly infinite). Then the following is proved as in~\ref{PCI domain}, using the family
from Example~\ref{proj/inj/flat dim example}(2).

\begin{proposition}
If a proper right $n$-PCI ring is Dedekind-finite, then it is a domain.
\end{proposition}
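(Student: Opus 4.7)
The plan is to mimic the proof of Proposition~\ref{PCI domain} almost verbatim, replacing the family of right ideals $I$ with $R/I$ injective by the family $\F$ of right ideals $I$ with $R/I$ of injective dimension $\leq n$ as a right $R$-module. By Example~\ref{proj/inj/flat dim example}(2) (taking $k=R$), this $\F$ is a right Oka family, so the Completely Prime Ideal Principle~\ref{CPIP} applies.

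Let $R$ be a proper right $n$-PCI ring that is Dedekind-finite. I would first show that $\Max(\F')=\{0\}$. For any nonzero right ideal $I_R\subseteq R$, the factor $R/I$ cannot be isomorphic to $R_R$: otherwise the projection $R\twoheadrightarrow R/I\cong R$ would split (since $R_R$ is projective), giving a decomposition $R_R\cong I\oplus R$ with $I\neq 0$, contradicting Dedekind-finiteness. Hence $R/I$ is a proper cyclic right $R$-module, and by the right $n$-PCI hypothesis its injective dimension is at most $n$, so $I\in\F$. On the other hand, $R/0=R_R$ has injective dimension strictly greater than $n$ by the definition of ``proper'' right $n$-PCI, so $0\in\F'$. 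Combining these, $\F'=\{0\}$ and therefore $0\in\Max(\F')$.

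Applying the CPIP~\ref{CPIP} to the right Oka family $\F$, the zero right ideal is completely prime. Since this is a two-sided ideal, Proposition~\ref{two-sided completely primes} upgrades this to the statement that $0\lhd R$ is a completely prime two-sided ideal, i.e., $R=R/0$ is a domain.

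I do not foresee a real obstacle: the argument is essentially a one-line adaptation of the $n=0$ case, with the injective-dimension family in place of the injectivity family. The only delicate point is the splitting argument showing that $R/I\cong R_R$ forces $I=0$ under Dedekind-finiteness, which is a standard consequence of the definition of Dedekind-finite rings recalled in the Conventions section.
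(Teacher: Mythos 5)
Your proof is correct and is essentially the paper's own argument: the paper proves this proposition "as in" Proposition~\ref{PCI domain}, using exactly the injective-dimension-$\leq n$ family from Example~\ref{proj/inj/flat dim example}(2), with Dedekind-finiteness ruling out $R/I\cong R_R$ for $I\neq 0$ and the CPIP plus Proposition~\ref{two-sided completely primes} finishing the job. Your spelled-out splitting argument for the Dedekind-finite step is the standard one and is fine.
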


Unlike the above result, Proposition~\ref{PCI domain} is not a conditional statement
because Damiano's theorem guarantees that a right $0$-PCI ring is right noetherian,
hence Dedekind-finite. Also, it is known that right PCI rings are right hereditary,
which implies that the injective dimension of $R_R$ is~$1$ if $R$ is proper right $0$-PCI.
Thus we pose the following questions.

\begin{question}
What aspects of the Faith-Damiano structure theory for PCI rings carry over to $n$-PCI
rings?  In particular, for a proper right $n$-PCI ring $R$, we ask:
\begin{enumerate}
\item Must $R_R$ have finite injective dimension? If so, is this dimension necessarily
equal to $n+1$?
\item Must $R$ be Dedekind-finite, or even possibly right noetherian?
What if we assume that $R_R$ has finite injective dimension, say equal to $n+1$
(if~\textnormal{(1)} above fails)?
\end{enumerate}
\end{question}

\separate

Further generalizing Examples~\ref{proj/inj/flat example} and~\ref{proj/inj/flat dim example},
we have the following.

\begin{example}
\label{Ext/Tor example}
Given a $k$-ring $R$, fix a right module $M_k$ and a left module ${}_kN$,
and let $n$ be a nonnegative integer. Fix one of the following properties of
a right ideal $I\subseteq R$:
\begin{itemize}
\item[(1)] $R/I$ satisfies $\Ext_k^n(R/I,M)=0$;
\item[(2)] $R/I$ satisfies $\Ext_k^n(M,R/I)=0$;
\item[(3)] $R/I$ satisfies $\Tor_n^k(R/I,N)=0$.
\end{itemize}
Applying Remark~\ref{creating Oka families}, the family $\F$ of right ideals
satisfying that fixed property is right Oka. Thus $\Max(\F')$ consists of completely
prime right ideals.
\end{example}

(The fact that the corresponding classes of cyclic modules are closed under extensions
follows from a simple analysis of the long exact sequences for $\Ext$ and $\Tor$ 
derived from a short exact sequence $0\rightarrow A\rightarrow
B\rightarrow C\rightarrow 0$ in $\M_R$.)

We can actually use these to recover the families (1)--(3) of
Example~\ref{proj/inj/flat dim example} as follows. It is known that a
module $B$ has $k$-projective dimension $n$ iff $\Ext_k^n(B,M)=0$ for all right
modules $M_k$. Then intersecting the families in Example~\ref{Ext/Tor example}(1)
over all modules $M_k$ gives the class in Example~\ref{proj/inj/flat dim example}(1).
A similar process works for~(2) and~(3) of Example~\ref{proj/inj/flat dim example}.

As an application of case (1) above, we present the following interesting
family of right ideals in any ring associated to an arbitrary module $M_R$.

\begin{proposition}
\label{ideals extending morphisms}
For any module $M_R$, the family $\F$ of all right ideals $I_R\subseteq R$
such that any homomorphism $f\colon I\rightarrow M$ extends to some
$\tilde{f}\colon R\rightarrow M$ is a right Oka family.
A right ideal maximal with respect to $I\notin\F$ is completely prime.
\end{proposition}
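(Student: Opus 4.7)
The plan is to identify $\F$ with a special case of the family from Example~\ref{Ext/Tor example}(1) — specifically the one obtained by taking $k := R$ (via the identity map) and $n := 1$; once this identification is made, the conclusion is immediate.

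First I would translate the extension condition into a vanishing condition on $\Ext^1_R$. Applying $\Hom_R(-,M)$ to the canonical short exact sequence
\[
0 \to I \to R \to R/I \to 0
\]
produces the long exact $\Ext$-sequence, and since $R_R$ is projective we have $\Ext^1_R(R,M) = 0$, leaving the exact sequence
\[
\Hom_R(R,M) \longrightarrow \Hom_R(I,M) \longrightarrow \Ext^1_R(R/I,M) \longrightarrow 0.
\]
Hence every homomorphism $f\colon I \to M$ extends to some $\tilde f\colon R \to M$ if and only if $\Ext^1_R(R/I,M) = 0$. This identifies $\F$ with $\{I_R \subseteq R : \Ext^1_R(R/I,M) = 0\}$, which is right Oka by Example~\ref{Ext/Tor example}(1). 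The last assertion of the proposition then follows by a direct application of the CPIP~\ref{CPIP}.

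I do not expect any real obstacle; the one point worth flagging is that the projectivity of $R_R$ is precisely what makes the $\Ext^1$-vanishing literally equivalent to the extension property (rather than a weaker approximate-extension condition). If one preferred to avoid invoking Example~\ref{Ext/Tor example}, one could verify the right Oka property by hand: given $I + aR,\ a^{-1}I \in \F$ and $f\colon I \to M$, first use $a^{-1}I \in \F$ to extend $r \mapsto f(ar)$ from $a^{-1}I$ to $R$, producing some $m \in M$ (the image of $1$) with $f(ar') = mr'$ for every $r' \in a^{-1}I$; then define $h\colon I + aR \to M$ by $x + ar \mapsto f(x) + mr$ (the displayed identity forces well-definedness on the ambiguity set $I \cap aR$); and finally extend $h$ to $R$ using $I + aR \in \F$. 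The only mildly technical step in this direct route is the well-definedness check for $h$, which is exactly where the hypotheses on $I + aR$ and $a^{-1}I$ combine.
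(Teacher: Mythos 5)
Your proposal is correct and takes essentially the same route as the paper: identify $\F$ with the family of Example~\ref{Ext/Tor example}(1) for $k=R$, $n=1$ via the long exact sequence for $\Hom_R(-,M)$ applied to $0\to I\to R\to R/I\to 0$, using projectivity of $R_R$ to kill $\Ext_R^1(R,M)$, then invoke the CPIP. Your supplementary hands-on verification of the Oka property is also sound (and is exactly the exercise the paper alludes to after its proof), but it is not needed for the main argument.
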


\begin{proof}
Let $\G$ be the family in Example~\ref{Ext/Tor example}(1) with $k=R$ and $n=1$.
We claim that $\F=\G$, from which the proposition will certainly follow.
Given $I_R\subseteq R$, consider the long exact sequence in $\Ext$ associated to
the short exact sequence $0\rightarrow I\rightarrow R\rightarrow R/I\rightarrow 0$:
\begin{align*}
0\rightarrow & \Hom_R(R/I,M)\rightarrow \Hom_R(R,M)\rightarrow \Hom_R(I,M) \\
\rightarrow & \Ext_R^1(R/I,M)\rightarrow\Ext_R^1(R,M)=0
\end{align*}
($\Ext_R^1(R,M)=0$ because $R_R$ is projective). Thus $I\in\F$ iff
the natural map $\Hom_R(R,M)\rightarrow \Hom_R(I,M)$ is surjective, iff its
cokernel $\Ext_R^1(R/I,M)$ is zero, iff $I\in\G$.
\end{proof}

It is an interesting exercise to ``check by hand'' that the family $\F$ above
satisfies the Oka property~\eqref{Oka property}. When $R$ is a right self-injective
ring, one can dualize the above proof of Proposition~\ref{ideals extending morphisms},
($R_R$ must be injective to ensure that $\Ext_R^1(M,R)=0$), and a similar argument
works using the functor $\Tor_1$ in place of $\Ext^1$. We obtain the following.

\begin{proposition}
\label{tor 1 family}
\textnormal{(A)} Let $R$ be a right self-injective ring and let $M_R$ be any module. The
family $\F$ of right ideals $I\subseteq R$ such that every homomorphism
$f:M\to R/I$ lifts to some $f':M\to R$ is a right Oka family. Hence, any $I\in\Max(\F')$
is completely prime.

\textnormal{(B)} For a $R$ and a module ${}_RN$, let $\F$ be the family of right ideals
$I_R\subseteq R$ such that the natural map $I\otimes_R N\to R\otimes_R N\cong N$
is injective. Then $\F$ is an Oka family of right ideals. Hence, any $I\in\Max(\F')$ is
completely prime.
\end{proposition}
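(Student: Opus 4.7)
The plan is to prove (A) by a direct lifting construction that mirrors the Oka condition, and to prove (B) by recognizing $\F$ as the family appearing in Example~\ref{Ext/Tor example}(3).

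For (A), I will verify the Oka condition directly. Since $R/R = 0$, membership $R \in \F$ is trivial. Given $I_R \subseteq R$ and $a \in R$ with $I + aR,\ a^{-1}I \in \F$, and an arbitrary homomorphism $f \colon M \to R/I$, I construct a lift $f' \colon M \to R$ in three stages. First, compose $f$ with the natural quotient $\bar\pi \colon R/I \twoheadrightarrow R/(I+aR)$ and invoke $I + aR \in \F$ to lift $\bar\pi f$ to some $g \colon M \to R$. Second, consider the ``defect'' $h := f - (g \bmod I) \colon M \to R/I$; since $\bar\pi h = 0$, the image of $h$ lies in $(I+aR)/I \cong R/a^{-1}I$ by Lemma~\ref{cyclic module lemmas}(A), so $h$ may be viewed as a map $\tilde h \colon M \to R/a^{-1}I$, which lifts to some $h' \colon M \to R$ by the hypothesis $a^{-1}I \in \F$. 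Third, set $f'(m) := g(m) + a h'(m)$; this is a right $R$-module homomorphism because left multiplication by $a$ commutes with the right $R$-action. The verification that $f' \bmod I = f$ amounts to tracing through the explicit isomorphism $r + a^{-1}I \mapsto ar + I$: the element $a h'(m) + I$ corresponds to $h'(m) + a^{-1}I = \tilde h(m)$, which under the identification equals $h(m) = f(m) - (g(m) + I)$, so the ``defect'' is exactly cancelled.

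For (B), the idea is to recognize $\F$ as a known Oka family. Applying $-\otimes_R N$ to the short exact sequence $0 \to I \to R \to R/I \to 0$ yields the long exact sequence
\[
\Tor_1^R(R, N) \to \Tor_1^R(R/I, N) \to I \otimes_R N \to R \otimes_R N \to R/I \otimes_R N \to 0.
\]
Since $R$ is flat over itself, $\Tor_1^R(R,N) = 0$, so the natural map $I \otimes_R N \to R \otimes_R N$ is injective if and only if $\Tor_1^R(R/I, N) = 0$. Thus $\F$ coincides with the family $\{I_R \subseteq R : \Tor_1^R(R/I, N) = 0\}$, which is a right Oka family by Example~\ref{Ext/Tor example}(3) (taking $k = R$ and $n = 1$). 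In both parts, the statement that every $I \in \Max(\F')$ is completely prime then follows immediately from the CPIP~\ref{CPIP}.

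The main obstacle lies in part~(A): one must identify precisely which element of $R$, after left multiplication by $a$, produces the correction term that compensates for the discrepancy between $f$ and $g \bmod I$. Tracing the isomorphism $R/a^{-1}I \cong (I + aR)/I$ of Lemma~\ref{cyclic module lemmas}(A) explicitly dictates this element to be $h'(m)$, after which all remaining verifications reduce to routine computations modulo $I$. Notably, the self-injectivity hypothesis enters the argument only implicitly: it guarantees (via $\Ext_R^1(M,R) = 0$) that the lifting formulation of $\F$ matches the homological interpretation $\F = \{I : \Ext_R^1(M, I) = 0\}$, but the direct construction above works regardless.
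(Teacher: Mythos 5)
Your proposal is correct. Part (B) is exactly the paper's argument: the paper proves (B) by the same long exact sequence for $\Tor$, identifying $\F$ with the family $\{I_R\subseteq R:\Tor_1^R(R/I,N)=0\}$ of Example~\ref{Ext/Tor example}(3) (with $k=R$, $n=1$), and then invoking the CPIP. For part (A), however, you take a genuinely different route. The paper's (sketched) proof dualizes the homological argument of Proposition~\ref{ideals extending morphisms}: applying $\Hom_R(M,-)$ to $0\to I\to R\to R/I\to 0$ and using $\Ext_R^1(M,R)=0$ (this is where right self-injectivity enters) to identify $\F$ with $\{I:\Ext_R^1(M,I)=0\}$, a similarity-invariant condition amenable to the Oka machinery. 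You instead verify the Oka property directly: lift $\bar\pi f$ through $I+aR\in\F$, observe the defect $h=f-(g\bmod I)$ lands in $(I+aR)/I\cong R/a^{-1}I$, lift it through $a^{-1}I\in\F$, and correct by $m\mapsto a h'(m)$; tracing the isomorphism of Lemma~\ref{cyclic module lemmas}(A) shows $f'=g+ah'$ is a genuine lift. I checked this computation and it is correct, and your closing observation is also right: the direct construction never uses injectivity of $R_R$, so you actually establish the Oka property of the lifting family (and hence the CPIP conclusion) for an arbitrary ring, with self-injectivity needed only to match $\F$ with the $\Ext^1$-vanishing description. So your argument is the ``check by hand'' verification the paper alludes to as an exercise, and it buys a slightly stronger statement than the paper's homological route, at the cost of losing the clean identification of $\F$ with an $\Ext$-defined family.
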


For us, what is most interesting about Propositions~\ref{ideals extending morphisms}
and~\ref{tor 1 family} is that they provide multiple ways to define right Oka families
starting with \emph{any given module $M_R$.} Thanks to the Completely Prime Ideal
Principle~\ref{CPIP}, each of these families $\F$ gives rise to completely prime right ideals
in $\Max(\F')$ whenever this set is nonempty. 

\subsection{Finiteness conditions, multiplicative sets, and invertibility}
The final few applications of the Completely Prime Ideal Principle given here
come from finiteness conditions on modules, multiplicatively closed subsets of
a ring, and invertible right ideals.

We first turn our attention to finiteness conditions. We remind the reader that
a module $M_R$ is said to be \emph{finitely cogenerated} provided that, for every
set $\{N_i:i\in I\}$ of submodules of $M$ such that $\bigcap_{i\in I} N_i = 0$,
there exists a finite subset $J\subseteq I$ such that $\bigcap_{j\in J} N_j =0$.
This is equivalent to saying that the socle of $M$ is finitely generated and
is an essential submodule of $M$. See~\cite[\S 19A]{Lectures} for further details.

\begin{example}
\label{finiteness conditions}
Let $R$ be a $k$-ring. Fix any one of the following properties of a right ideal
$I$ of $R$:
\begin{itemize}
\item[(1A)] $R/I$ is a finitely generated right $k$-module;
\item[(1B)] $R/I$ is a finitely cogenerated right $k$-module;
\item[(2)] $R/I$ has cardinality~$<\alpha $ for some infinite
cardinal $\alpha$;
\item[(3A)] $R/I$ is a noetherian right $k$-module;
\item[(3B)] $R/I$ is an artinian right $k$-module;
\item[(4)] $R/I$ is a right $k$-module of finite length;
\item[(5)] $R/I$ is a right $k$-module of finite uniform dimension.
\end{itemize}
The family $\F$ of right ideals satisfying that fixed property is right Oka by
Remark~\ref{creating Oka families}; hence $\Max(\F')$ consists of completely
prime right ideals.
\end{example}

As a refinement of~(4) above, notice that the right $k$-modules
of finite length whose composition factors have certain prescribed isomorphism
types is closed under extensions. The same is true for the right $k$-modules
whose length is a multiple of a fixed integer~$d$. Thus these classes give
rise to two other Oka families of right ideals.

\separate

Right Oka families and completely prime right ideals also arise in connection
with multiplicatively closed subsets of a ring.

\begin{example}
\label{S-torsion example}
Consider a multiplicative subset $S$ of a ring $R$ (i.e., $S$ is a submonoid of the
multiplicative monoid of $R$). A module $M_R$ is said
to be \emph{$S$-torsion} if, for every $m\in M$ there exists $s\in S$ such
that $ms=0$. It is easy to see that the class of $S$-torsion modules is
closed under extensions. Thus the family $\F$ of right ideals $I_R\subseteq R$
such that $R/I$ is $S$-torsion is a right Oka family, and $\Max(\F')$
consists of completely prime right ideals.

Recall that a multiplicative set $S$ in a ring $R$ is called a
\emph{right Ore set} if, for all $a\in R$ and $s\in S$,
$aS\cap sR\neq\varnothing$. (For example, it is easy to see that any
multiplicative set in a commutative ring is right Ore.) One can show that
a multiplicative set $S$ is right Ore iff for every module $M_R$ the set
\[
t_S (M) := \{ m \in M : ms=0 \text{ for some } s \in S \}
\]
of \emph{$S$-torsion} elements of $M$ is a submodule of $M$. This makes it
easy to verify that for such $S$, $R/I$ is $S$-torsion iff $I\cap S\neq\varnothing$.
So for a right Ore set $S\subseteq R$, the family of all right ideals $I$
of $R$ such that $I\cap S\neq\varnothing$ is equal to the family $\F$ above
and thus is a right Oka family. In particular, a right ideal maximal with
respect to being disjoint from $S$ is completely prime. We will be able to
strengthen these statements later---see Example~\ref{Ore set example}.
\end{example}

\separate

One further right Oka family comes from the notion of invertibility of right
ideals. Fix a ring $Q$ with a subring $R\subseteq Q$. For
any submodule $I_R\subseteq Q_R$ we write $I^*:=\{q\in Q:qI\subseteq R\}$,
which is a \emph{left} $R$-submodule of $Q$. We will say that a right
$R$-submodule $I\subseteq Q$ is \emph{right invertible (in $Q$)} if there exist
$x_1,\dots,x_n\in I$ and $q_1,\dots,q_n\in I^*$ such that $\sum x_i q_i =1$.
(This definition is inspired by~\cite[\S II.4]{Stenstroem}.) Notice that if
$I$ is right invertible as above, then $I$ is necessarily finitely generated,
with generating set $x_1,\dots,x_n$. The concept of a right invertible right
ideal certainly generalizes the notion of an invertible ideal in a commutative
ring, and it gives rise to a new right Oka family.

\begin{proposition}
\label{invertible right ideals}
Let $R$ be a subring of a ring $Q$. The family $\F$ of right ideals of $R$
that are right invertible in $Q$ is a right Oka family. The set $\Max(\F')$
consists of completely prime right ideals.
\end{proposition}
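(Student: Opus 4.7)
The plan is to verify the defining Oka property~\eqref{Oka property} directly; the statement on $\Max(\F')$ will then follow immediately from the Completely Prime Ideal Principle~\ref{CPIP}. Note that $R\in\F$ is trivial via $1=1\cdot 1$ with $1\in R\subseteq R^*$, so only~\eqref{Oka property} is at stake. A detour through Theorem~\ref{extension correspondence} does not seem promising here, since right invertibility of $I$ is not obviously a property of the isomorphism class of $R/I$.

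Suppose $I_R\subseteq R$ and $a\in R$ satisfy $I+aR,\ a^{-1}I\in\F$, witnessed by $\sum y_i p_i=1$ with $y_i\in I+aR$, $p_i\in(I+aR)^*$, and $\sum z_j r_j=1$ with $z_j\in a^{-1}I$, $r_j\in(a^{-1}I)^*$. Writing $y_i=w_i+as_i$ with $w_i\in I$, $s_i\in R$, I would decompose
\[
1=\sum_i w_i p_i + at,\qquad t:=\sum_i s_i p_i\in Q.
\]
Since $I\subseteq I+aR$ gives $(I+aR)^*\subseteq I^*$, each $p_i\in I^*$, so the first summand is already in the form required to witness the invertibility of $I$. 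For the second summand, I would use the second witness to write $a=a\cdot 1=\sum_j(az_j)r_j$, so that $at=\sum_j (az_j)(r_j t)$, with each $az_j\in I$ by definition of $a^{-1}I$.

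The task then reduces to showing $r_j t\in I^*$ for each $j$. I plan to prove the intermediate containment $tI\subseteq a^{-1}I$, which is equivalent to $(at)I\subseteq I$. Using $at=1-\sum_k w_k p_k$, for $i\in I$ one computes $(at)i=i-\sum_k w_k(p_k i)$; each $p_k i\in R$ (as $p_k\in I^*$) and $w_k\in I$, so the sum lies in $I$, and hence $(at)i\in I$. Given $tI\subseteq a^{-1}I$, it follows that $(r_j t)I=r_j(tI)\subseteq r_j(a^{-1}I)\subseteq R$, so $r_j t\in I^*$. Combining the two halves yields
\[
1=\sum_i w_i p_i+\sum_j(az_j)(r_j t)
\]
with $w_i,az_j\in I$ and $p_i,r_jt\in I^*$, exhibiting $I$ as right invertible.

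The main obstacle is managing the noncommutativity when fusing the two separate invertibility witnesses into one for $I$: the ``obvious'' attempts place $r_j$ and $t$ on the wrong sides of the elements they must annihilate into $R$, so a direct mimicry of the commutative proof fails. The key technical step is the containment $tI\subseteq a^{-1}I$, which uses the specific shape of $t=\sum s_i p_i$ coming from the first decomposition; this is what legitimizes the rearrangement and makes the argument go through in the noncommutative setting.
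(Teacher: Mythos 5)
Your proof is correct and follows essentially the same route as the paper's: the same decomposition $1=\sum w_ip_i+at$, the same substitution of the second witness into $at$, the same key containment $tI\subseteq a^{-1}I$ (the paper's $qI\subseteq a^{-1}I$) proved by the identical computation, and the same appeal to the CPIP for $\Max(\F')$. The only difference is notational (the paper packages your $t=\sum s_ip_i$ as an element $q\in(I+aR)^*$ from the start, which also makes $tI\subseteq R$ explicit), so there is nothing to add.
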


\begin{proof}
Let $I_R\subseteq R$ and $a\in R$ be such that $I+aR$ and $a^{-1}I$ are
right invertible. We want to show that $I$ is also right invertible. There
exist $i_1,\dots,i_m\in I$ and $q_k,\ q\in (I+aR)^*$ such that
$\sum_{i=1}^m i_k q_k+aq=1$. Similarly, there exist $x_1,\dots,x_n\in a^{-1}I$
and $p_j\in (a^{-1}I)^*$ such that $\sum x_j p_j=1$. Combining these equations,
we have
\begin{align*}
1 &= \sum i_k q_k + aq \\
&= \sum i_k q_k + a \left( \sum x_j p_j \right) q\\
&= \sum i_k q_k + \sum (a x_j) (p_j q).
\end{align*}
In this equation we have $i_k\in I$, $q_k\in (I+aR)^*\subseteq I^*$, and
$ax_j\in a(a^{-1}I)\subseteq I$. Thus we will be done if we can show that
every $p_jq\in I^*$.

We claim that $qI\subseteq a^{-1}I$. This follows from the fact that, for
any $i\in I$, $q_k i\in R$ so that
\[
aqi = \left( 1 - \sum i_k q_k \right) i = i - \sum i_k (q_k i) \in I.
\]
Thus we find
\[
(p_j q)I = p_j (qI) \subseteq (a^{-1}I)^* (a^{-1}I) \subseteq R.
\]
It follows that $p_jq\in I^*$, completing the proof.
\end{proof}

In the case that $R$ is a right Ore ring, it is known (see~\cite[II.4.3]{Stenstroem})
that the right ideals of $R$ that are right invertible in its classical right ring of
quotients $Q$ are precisely the projective right ideals that intersect the right
Ore set $S$ of regular elements of $R$. 
(Recall that a ring $R$ is right Ore if the multiplicatively closed set of regular
elements in $R$ is right Ore. This is equivalent to the statement that $R$
has a classical right ring of quotients $Q$; see~\cite[\S10B]{Lectures}.)
We can use this to
give a second proof that the family $\F$ of right invertible right ideals of
$R$ is a right Oka family in this case. The alternative characterization of right
invertibility in this setting means that $\F$ is the intersection of the family
$\F_1$ of projective right ideals (which was shown to be a right Oka family as
an application of Example~\ref{proj/inj/flat dim example}) with the family $\F_2$
of right ideals that intersect the right Ore set $S$ (which was shown to be a
right Oka family in Example~\ref{S-torsion example}). Recalling
Remark~\ref{complete lattice}, we conclude that $\F=\F_1\cap\F_2$ is a right Oka
family.

Using this notion of invertibility, we can generalize Theorem~\ref{Cohen's invertible theorem},
due to Cohen, which states that a commutative ring $R$ is a Dedekind domain iff
every nonzero prime ideal of $R$ is invertible.

\begin{proposition}
For a subring $R$ of a ring $Q$, every nonzero right ideal of $R$ is right
invertible in $Q$ iff every nonzero completely prime right ideal of $R$ is right
invertible in $Q$. If $R$ is a right Ore ring with classical right ring of
quotients $Q$, then $R$ is a right hereditary right noetherian domain iff every
nonzero completely prime right ideal of $R$ is right invertible in $Q$.
\end{proposition}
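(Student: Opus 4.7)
The plan is to deduce the first equivalence from the Completely Prime Ideal Principle Supplement and to deduce the second equivalence from the first, combined with the Stenstr\"om characterization of right invertibility cited just before the statement.

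For the first iff, Proposition~\ref{invertible right ideals} already establishes that the family $\F$ of right ideals of $R$ that are right invertible in $Q$ is a right Oka family. Since right invertible right ideals are finitely generated (as noted immediately after the definition of right invertibility), $\F$ satisfies the chain-condition hypothesis of Theorem~\ref{CPIP supplement}. One direction is trivial; in the other, assuming every nonzero completely prime right ideal of $R$ lies in $\F$, apply Theorem~\ref{CPIP supplement}(2) with $J = 0$ (in the ``properly containing $J$'' form) to conclude that every nonzero right ideal of $R$ lies in $\F$.

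For the second iff, the ``only if'' direction is routine: in a right hereditary right noetherian domain, every nonzero right ideal is finitely generated and projective, and contains a nonzero (hence regular) element, so the Stenstr\"om characterization (projective plus meeting the Ore set of regular elements) forces it to be right invertible in $Q$. Conversely, assume every nonzero completely prime right ideal of $R$ is right invertible in $Q$. The first part of the proposition promotes this to the statement that \emph{every} nonzero right ideal of $R$ is right invertible in $Q$. Via the Stenstr\"om characterization, each such right ideal is then projective (so $R$ is right hereditary, noting $R_R$ itself is projective) and finitely generated (so $R$ is right noetherian).

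The delicate step is showing that $R$ is a domain. My plan here is to appeal to Corollary~\ref{domain characterization}, which reduces the problem to verifying that every nonzero completely prime right ideal of $R$ contains a left regular element. But by the Stenstr\"om characterization again, each such right ideal meets the Ore set of regular elements of $R$, and regular elements are in particular left regular, so Corollary~\ref{domain characterization} applies and $R$ is a domain. The main obstacle, if one can call it that, is organizational: the domain conclusion uses the Ore hypothesis in an essential way (without it, right invertibility need not guarantee the presence of a left regular element), so Corollary~\ref{domain characterization} must be invoked only after bootstrapping via the CPIP Supplement and then translating right invertibility through Stenstr\"om.
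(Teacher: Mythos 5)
Your proof is correct and takes essentially the same route as the paper: the first equivalence via the right Oka family of right invertible right ideals (Proposition~\ref{invertible right ideals}) and the CPIP Supplement~\ref{CPIP supplement}(2) (with the chain condition supplied by the fact that right invertible right ideals are f.g.), and the second equivalence by translating right invertibility through Stenstr\"om's characterization in the right Ore setting. The only difference is one of detail: the paper leaves the domain conclusion implicit in that translation (every nonzero right ideal being projective, f.g., and containing a regular element yields a right hereditary right noetherian domain), whereas you make it explicit by feeding the regular elements into Corollary~\ref{domain characterization}; both are fine.
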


\begin{proof}
First suppose that $R$ is right Ore. According to~\cite[Prop.~II.4.3]{Stenstroem},
a right ideal of $R$ is right invertible in $Q$ iff it is projective and contains a
regular element. Thus the right Ore ring $R$ is a right hereditary right
noetherian domain iff every nonzero right ideal of $R$ is right invertible in
the classical right quotient ring of $R$. So it suffices to prove the first
statement.

Now for any $R$ and $Q$, the family $\F$ of right ideals of $R$ that are right invertible in $Q$ is
a right Oka family by Proposition~\ref{invertible right ideals}. Once we recall
that a right invertible right ideal is finitely generated, the claim follows
from the CPIP Supplement~\ref{CPIP supplement}(2).
\end{proof}

\section{Comonoform right ideals and divisible right Oka families}
\label{comonoform section}

We devote the final section of this paper to study a particularly well-behaved
subset of the completely prime right ideals of a general ring, the \emph{comonoform
right ideals} (Definition~\ref{(co)monoform definition}). Our purpose is to provide a
richer understanding of the completely prime right ideals of a general ring.
There is a special type of Prime Ideal Principle that accompanies this new set of right
ideals, as well as new applications to the one-sided structure of rings.

These special right ideals $I_R\subseteq R$ are defined by imposing a certain condition
on the factor module $R/I$. First we must describe the many equivalent ways to
phrase this module-theoretic condition.
Given an $R$-module $M_{R}$, a submodule $N\subseteq M$ is said to be \emph{dense}
if, for all $x,y\in M$ with $x\neq0$, $x\cdot (y^{-1}N)\neq0$ (recall the definition
of $y^{-1}M$ from~\S\ref{completely prime section}). We write $N\subseteq_{d}M$
to mean that $N$ is a dense submodule of $M$, and we let $E(M)$ denote the injective
hull of $M$.
It is known that $N\subseteq_{d}M$ iff $\Hom_{R}(M/N,E(M))=0$, iff for every
submodule $U$ with $N\subseteq U\subseteq M$ we have $\Hom_R(U/N,M)=0$.
In addition, for any submodules $N\subseteq U\subseteq M$, it turns out that
$N\subseteq_{d}M$ iff $N\subseteq_{d}U$ and $U\subseteq_{d}M$.
(See~\cite[(8.6)~\&~(8.7)]{Lectures} for details.)
Finally, any dense submodule of $M$ is \emph{essential} in $M$, meaning that it
has nonzero intersection with every nonzero submodule of $M$.

\begin{proposition}
\label{monoform characterization}
For a module $M_{R}\neq0$, the following are equivalent:
\begin{itemize}
\item[\normalfont (1)] Every nonzero submodule of $M$ is dense in $M$;
\item[\normalfont (2)] Every nonzero cyclic submodule of $M$ is dense in $M$;
\item[\normalfont (3)] For any $x,y,z\in M$ with $x,z\neq0$, $x\cdot y^{-1}(zR)\neq 0$;
\item[\normalfont (4)] Any nonzero $f\in\Hom_{R}(M,E(M))$ is injective;
\item[\normalfont (5)] For any submodule $C\subseteq M$, any nonzero $f\in\Hom_R(C,M)$
(resp.\ any nonzero $f\in\Hom_R(C,E(M))$) is injective;
\item[\normalfont ($5'$)] $M$ is uniform and for any cyclic submodule $C\subseteq M$, any
nonzero $f\in\Hom_R(C,M)$ is injective;
\item[\normalfont (6)] There is no nonzero $R$-homomorphism from any submodule of any
proper factor of $M$ to $M$ (resp. to $E(M)$).
\end{itemize}
\end{proposition}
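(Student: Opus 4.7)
The plan is to establish all implications using the three density facts recalled just before the proposition statement: that $N\subseteq_{d} M \iff \Hom_R(M/N,E(M))=0$; that $N\subseteq_{d} M \iff \Hom_R(U/N,M)=0$ for every $U$ with $N\subseteq U\subseteq M$; and that density is transitive in the sense that $N\subseteq_{d} M \iff N\subseteq_{d} U$ and $U\subseteq_{d} M$ whenever $N\subseteq U\subseteq M$. With these in hand, (1)$\iff$(2) follows from transitivity: given (2) and a nonzero submodule $N\subseteq M$, pick any $0\neq z\in N$; then $zR\subseteq_{d} M$ by (2), and transitivity applied to $zR\subseteq N\subseteq M$ forces $N\subseteq_{d} M$. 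The equivalence (2)$\iff$(3) is an immediate unpacking of the definition of a dense submodule applied to the cyclic submodule $zR$.

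For (1)$\iff$(4), I would use the first density characterization. Any nonzero $f\in\Hom_R(M,E(M))$ with kernel $K\neq 0$ factors through a nonzero element of $\Hom_R(M/K,E(M))$, contradicting density of $K$; conversely, a nonzero $g\colon M/N\to E(M)$ (for $0\neq N\subseteq M$) composed with the projection yields a nonzero non-injective element of $\Hom_R(M,E(M))$, contradicting (4). For (4)$\iff$(5), I use the injectivity of $E(M)$: a nonzero $f\colon C\to E(M)$ extends to some $\tilde{f}\colon M\to E(M)$, which is nonzero and hence injective by (4), so $f$ is too; the two halves of (5) (target $M$ versus target $E(M)$) are linked via the essentialness of $M$ in $E(M)$, because for a nonzero $f\colon C\to E(M)$ the submodule $f(C)\cap M$ is nonzero, so the restriction of $f$ to $C':=f^{-1}(M)$ is a nonzero map $C'\to M$ with the same kernel as $f$. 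For (5)$\iff$(5'), I observe that dense submodules are essential so $M$ is uniform once (1) holds; the nontrivial direction (5')$\Rightarrow$(5) then proceeds by choosing $c\in C$ with $f(c)\neq 0$ and noting that $cR\cap \ker f$ is nonzero by uniformity whenever $\ker f\neq 0$, which would contradict (5') applied to the nonzero $f|_{cR}$.

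The equivalence (1)$\iff$(6) is essentially a rephrasing of the second density characterization: (1) says exactly that $\Hom_R(U/N',M)=0$ for every $N'\subseteq U\subseteq M$ with $N'\neq 0$, and the submodules of the proper factor $M/N'$ are precisely the $U/N'$, giving the $M$-valued form of (6); the $E(M)$-valued form then follows from (4) by extending any map $U/N'\to E(M)$ through the injectivity of $E(M)$ to a map $M\to E(M)$ whose kernel contains the nonzero $N'$. The main obstacle, I expect, is bookkeeping: each of (5) and (6) has two parallel formulations (target $M$ vs.\ target $E(M)$) that must be threaded together carefully, and the uniformity step in (5')$\Rightarrow$(5) must be justified cleanly. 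Beyond that, the argument reduces to a careful but routine assembly of the density machinery recalled just before the proposition.
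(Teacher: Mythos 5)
Your proposal is correct and follows essentially the same route as the paper: both reduce everything to the recalled density facts ((1)--(4),(5),(6)) and handle (5)$\iff$($5'$) by noting that dense submodules are essential (hence $M$ is uniform) and by restricting a non-injective map to a suitable cyclic submodule to reach a contradiction via uniformity. You merely spell out some steps the paper dismisses as ``straightforward,'' which is fine.
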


\begin{proof}
Clearly (1)$\implies$(2). For (2)$\implies$(1), let $P$ be any nonzero
submodule of $M$. Then, taking some cyclic submodule $0\neq C\subseteq P$,
we have $C\subseteq_d M\implies P\subseteq_d M$.

Now (2)$\iff$(3) is clear from the definition of density. Also,
(1)$\iff$(4) and (1)$\iff$(5) follow from the various reformulations of
density stated above. The equivalence of~(5),~(6), and their parenthetical
formulations is straightforward.

Finally we prove (5)$\iff$($5'$). Assume~(5) holds; to verify~($5'$),
we only need to show that $M$ is uniform. By the equivalence of~(1) and~(5),
we see that every nonzero submodule of $M$ is dense and is therefore
essential. This proves that $M$ is uniform. Now suppose that~($5'$) holds,
and let $0\neq f\in\Hom(C,M)$ where $C$ is any submodule of $M$. Fix some cyclic
submodule $0\neq C_{0}\subseteq C$ such that $C_{0}\nsubseteq\ker f$,
and let $g$ denote the restriction of $f$ to $C_{0}$. By hypothesis,
$0=\ker g=\ker f\cap C_{0}$. Because $M$ is uniform this implies that
$\ker f=0$, proving that~(5) is true.
\end{proof}

An easy example shows that the requirement in ($5'$) that $M$ be uniform
is in fact necessary. If $V_k$ is a vector space over a
division ring $k$ then it is certainly true that every nonzero homomorphism
from a cyclic submodule of $V$ into $V$ is injective. However, if
$\dim_{k}V>1$, then $V$ has nontrivial direct summands and cannot be
uniform.

\begin{definition}
\label{(co)monoform definition}
A nonzero module $M_R$ is said to be \emph{monoform} (following~\cite{GordonRobson})
if it satisfies the equivalent conditions of Proposition~\ref{monoform characterization}. A
right ideal $P_R\subsetneq R$ is \emph{comonoform} if the factor module
$R/P$ is monoform.
\end{definition}

As a basic example, notice that simple modules are monoform and hence
maximal right ideals are comonoform. 
We can easily verify that the comonoform right ideals of a ring form a
subset of the set of completely prime right ideals, as mentioned earlier.

\begin{proposition}
\label{comonoform is completely prime}
If $M_R$ is monoform, then every nonzero endomorphism of $M$ is injective.
In particular, every comonoform right ideal of $R$ is completely prime.
\end{proposition}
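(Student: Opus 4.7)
The proof is essentially immediate from the tools already assembled. My plan is to simply unpack the relevant equivalent characterizations of monoform modules and of completely prime right ideals.

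For the first statement, I would invoke characterization~(5) of Proposition~\ref{monoform characterization}: if $M_R$ is monoform, then for every submodule $C\subseteq M$, every nonzero $f\in\Hom_R(C,M)$ is injective. Specializing to $C=M$, this says exactly that every nonzero endomorphism of $M$ is injective. (Alternatively, one could argue directly from density: any nonzero endomorphism $f$ has image contained in $M$, and if $\ker f\neq 0$ were to hold, then $\ker f \subseteq_d M$ by condition~(1), but density forces $\Hom_R(M/\ker f, M)=0$, contradicting that $f$ factors through $M/\ker f$ as a nonzero map into $M\subseteq E(M)$.)

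For the second statement, let $P_R\subsetneq R$ be comonoform, so by Definition~\ref{(co)monoform definition} the module $R/P$ is monoform. Applying the first part of the proposition, every nonzero element of $\End_R(R/P)$ is injective. By characterization~(3) of Proposition~\ref{completely prime characterization}, this is precisely the condition that $P$ be a completely prime right ideal.

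There is really no obstacle here: the heavy lifting has already been done in Propositions~\ref{completely prime characterization} and~\ref{monoform characterization}, and the result is just a matching of characterizations. The only care needed is to cite the correct equivalent form (namely that injectivity of nonzero endomorphisms characterizes completely prime right ideals, which is exactly the monoform condition~(5) restricted to $C=M$).
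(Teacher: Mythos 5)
Your proof is correct and follows exactly the paper's route: apply Proposition~\ref{monoform characterization}(5) with $C=M$ for the first claim, then combine Definition~\ref{(co)monoform definition} with characterization~(3) of Proposition~\ref{completely prime characterization} for the second. The extra density argument you sketch is a fine alternative but unnecessary.
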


\begin{proof}
The first claim follows from Proposition~\ref{monoform characterization}(5)
by taking $C=M$ there. Now the second statement is true by
Proposition~\ref{completely prime characterization}.
\end{proof}

Some clarifying remarks about terminology are appropriate.
Monoform modules have been given several other names in the literature.
They seem to have been first investigated by O.~Goldman in~\cite[\S6]{Goldman}.
Each monoform module is associated to a certain \emph{prime right Gabriel filter}
$\F$ (a term which we will not define here), and Goldman referred to such a module
as a \emph{supporting module for $\F$}. They have also been referred to as
\emph{cocritical modules}, \emph{$\F$-cocritical modules}, and \emph{strongly
uniform modules}. The latter term is justified because, as shown in ($5'$) above,
any monoform module is uniform. Also, comonoform right ideals have been referred
to as \emph{critical right ideals}~\cite{LambekMichler} (which explains the term
``cocritical module'') and \emph{super-prime right ideals}~\cite{Popescu}. We
have chosen to use the term ``monoform'' because we feel that it best describes
the properties of these modules, and we are using the term ``comonoform''
rather than ``critical'' for right ideals in order to avoid confusion with
the modules that are critical in the sense of the Gabriel-Rentschler Krull
dimension.

\separate

Comonoform right ideals enjoy special properties that distinguish them
from the more general completely prime right ideals. For instance, if $P$
is a comonoform right ideal of $R$, then $R/P$ is uniform by Proposition~\ref{monoform characterization}($5'$).
On the other hand, Example~\ref{completely prime not meet-irreducible} showed
that the more general completely prime right ideals do not always have this property.
A second desirable property of comonoform right ideals is given in the
following lemma.
It is easy to verify (from several of the characterizations in Proposition~\ref{monoform characterization}) that a
nonzero submodule of a monoform module is again monoform. Applying
Lemma~\ref{cyclic module lemmas}(A) yields the following result.

\begin{lemma}
For any comonoform right ideal $P_R\subseteq R$ and any element $x\in R\setminus P$,
the right ideal $x^{-1}P$ is also comonoform.
\end{lemma}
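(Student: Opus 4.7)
The plan is to deduce the lemma directly from the combination of Lemma~\ref{cyclic module lemmas}(A) with the (asserted but not yet verified) fact that a nonzero submodule of a monoform module is monoform. First I would check that $x^{-1}P$ is a \emph{proper} right ideal of $R$: if $1 \in x^{-1}P$ then $x = x \cdot 1 \in P$, contradicting $x \notin P$. Next, by Lemma~\ref{cyclic module lemmas}(A), we have an $R$-module isomorphism
\[
R/x^{-1}P \;\overset{\sim}{\longrightarrow}\; (P + xR)/P \;\subseteq\; R/P,
\]
and since $x \notin P$, the submodule $(P+xR)/P$ of the monoform module $R/P$ is nonzero. So it suffices to show that this nonzero submodule is itself monoform; then $R/x^{-1}P$ will be monoform and $x^{-1}P$ will be comonoform by Definition~\ref{(co)monoform definition}.

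The remaining task is to verify the submodule claim: if $M$ is monoform and $0 \neq N \subseteq M$ is a submodule, then $N$ is monoform. The cleanest route is via characterization (5) of Proposition~\ref{monoform characterization}: given a submodule $C \subseteq N$ and a nonzero $f \in \Hom_R(C, N)$, composing $f$ with the inclusion $N \hookrightarrow M$ produces a nonzero element of $\Hom_R(C,M)$, which is injective because $M$ is monoform; hence $f$ is injective. Alternatively, one can use characterization (1) together with the transitivity property cited in the excerpt (that $N' \subseteq_d M$ iff $N' \subseteq_d U$ and $U \subseteq_d M$, for $N' \subseteq U \subseteq M$): any nonzero submodule $N'$ of $N$ is dense in $M$ by monoformity of $M$, and $N \subseteq_d M$ for the same reason, so the transitivity gives $N' \subseteq_d N$.

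There is no real obstacle here; the only delicate point is making sure that $(P+xR)/P$ is nonzero, which is exactly where the hypothesis $x \notin P$ enters, and that the submodule-monoform claim is genuinely an easy consequence of one of the characterizations in Proposition~\ref{monoform characterization}.
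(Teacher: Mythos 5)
Your proposal is correct and follows essentially the same route as the paper: the paper also deduces the lemma by combining the isomorphism $R/x^{-1}P \cong (P+xR)/P \subseteq R/P$ from Lemma~\ref{cyclic module lemmas}(A) with the fact that a nonzero submodule of a monoform module is monoform, which the paper asserts as easy and you verify via Proposition~\ref{monoform characterization}(5). Your verification of that submodule fact and of the nonvanishing of $(P+xR)/P$ (using $x\notin P$) is exactly the intended argument.
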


It is readily verified that the lemma above does not hold if we replace the
word ``comonoform'' with ``completely prime.'' For instance, consider
again Example~\ref{completely prime not meet-irreducible}. For the completely
prime right ideal $P$ of the ring $R$ described there and the element
$x=E_{12}+E_{13}\in R$, it is readily verified that
\[
x^{-1}P = 
\begin{pmatrix}
k & k & k\\
0 & 0 & 0\\
0 & 0 & 0
\end{pmatrix}
\]
is not a completely prime right ideal (because the module $R/x^{-1}P$ is decomposable).

Without going into details, we mention that the lemma above suggests that comonoform
right ideals $P$ can be naturally grouped into equivalence classes corresponding to the
isomorphism classes of the injective hulls $E(R/P)$. (This is directly related to Goldman's
notion of primes in~\cite{Goldman}, and is also investigated in~\cite{LambekMichler}.)

\separate

Let us consider a few ways one might find comonoform right ideals in
a given ring $R$. First, we have already seen that
\emph{every maximal right ideal in $R$ is comonoform}. Second,
Remark~\ref{spectrum correspondence} shows that if $I\lhd R$ is an ideal
contained in a right ideal $J$, then \emph{$J$ is a comonoform right ideal of $R$
iff $J/I$ is a comonoform right ideal of $R/I$}. Next, let us examine which
(two-sided) ideals of $R$ are comonoform as right ideals. The following result
seems to have been first recorded (without proof) in~\cite[Prop.~4]{Hudry}.

\begin{proposition}
\label{comonoform ideals}
An ideal $P\lhd R$ is comonoform as a right ideal iff $R/P$ is a right
Ore domain.
\end{proposition}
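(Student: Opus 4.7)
The plan is to set $S := R/P$ and reduce the statement to a purely ring-theoretic property of $S$. Since $P \subseteq \ann(R/P)$, the submodule lattices of $(R/P)_R$ and $S_S$ agree, and likewise $\End_R(R/P) = \End_S(S_S)$. Every characterization of monoformity in Proposition~\ref{monoform characterization} is phrased purely in terms of submodules and Hom-groups, so $(R/P)_R$ is monoform as an $R$-module iff $S_S$ is monoform as an $S$-module. After this reduction, it suffices to show that for any ring $S$, the right module $S_S$ is monoform if and only if $S$ is a right Ore domain.

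For the forward direction, I would use characterization~(5) (or equivalently~(3) from Proposition~\ref{completely prime characterization}): every nonzero endomorphism of $S_S$ is injective. Under the natural identification $\End(S_S) \cong S$, sending $f$ to $f(1)$, the endomorphism corresponding to $s \in S$ is left multiplication $\lambda_s$. Injectivity of $\lambda_s$ for every $s \neq 0$ says $sx = 0 \Rightarrow x = 0$, which (applied in the form: if $xy = 0$ and $x \neq 0$, then $\lambda_x$ injective forces $y = 0$) shows $S$ is a domain. Moreover, monoform modules are uniform by Proposition~\ref{monoform characterization}($5'$), so $S_S$ is uniform; a domain whose regular module is uniform is by definition right Ore. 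Hence $S$ is a right Ore domain.

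For the reverse direction, I would verify condition~($5'$) of Proposition~\ref{monoform characterization}. Uniformity of $S_S$ is precisely the Ore condition for the domain $S$: given nonzero $a, b \in S$, the Ore condition supplies $s, t \in S$ with $as = bt \neq 0$ (automatically $s, t \neq 0$ as $S$ is a domain), so $aS \cap bS \neq 0$. For the cyclic-submodule part, a nonzero $S$-homomorphism $f\colon aS \to S$ (with $a \neq 0$) is determined by $b := f(a)$, and $b \neq 0$. Then $f(as) = bs$, and because $S$ is a domain, $bs = 0$ forces $s = 0$, hence $as = 0$, so $f$ is injective. This gives ($5'$), hence $S_S$ is monoform.

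There is no serious obstacle here; the main subtlety is merely to justify the reduction $S := R/P$ carefully (so that the equivalences of Proposition~\ref{monoform characterization} transfer between the two module structures) and to remember that the endomorphism ring $\End(S_S)$ acts by \emph{left} multiplication, which is what links monoformity of $S_S$ to the left/right cancellation property defining a domain.
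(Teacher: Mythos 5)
Your proof is correct, and it differs from the paper's in the converse direction. The forward implication is essentially the paper's argument: the paper gets ``$R/P$ is a domain'' by citing Proposition~\ref{comonoform is completely prime} together with Proposition~\ref{two-sided completely primes}, and then, just as you do, combines this with uniformity of $(R/P)_R$ from Proposition~\ref{monoform characterization}($5'$) to conclude right Ore; your identification of $\End(S_S)$ with left multiplications simply makes those citations explicit. Where you genuinely diverge is the converse: the paper passes to the classical right division ring of quotients $Q$ of $S:=R/P$, uses the fact that $E(S_S)=Q_S$, and applies characterization~(4) of Proposition~\ref{monoform characterization} (every nonzero map $S\to E(S)=Q$ is injective), whereas you verify condition~($5'$) directly --- uniformity of $S_S$ is exactly the right Ore condition for a domain, and a nonzero homomorphism $aS\to S$ is injective by cancellation since it sends $as\mapsto f(a)s$ with $f(a)\neq 0$. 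Your route is more elementary and self-contained: it needs no injective hulls and no appeal to the (true but not immediate) fact that $Q_S$ is the injective hull of $S_S$, while the paper's route is shorter once that fact is granted and ties in with the torsion-theoretic material later in the section. Your reduction to $S=R/P$ is also legitimate; the cleanest way to justify it, as you in effect do, is to transfer a characterization of monoformity that does not mention $E(M)$, such as ($5'$), using that the $R$-submodules of $R/P$, the $S$-submodules of $S$, and the relevant Hom-groups coincide because $P$ annihilates $R/P$.
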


\begin{proof}
First suppose that $P_R$ is comonoform. Then $P$ is completely prime by
Proposition~\ref{comonoform is completely prime}, so that $R/P$ is a domain by
Proposition~\ref{two-sided completely primes}. Also $R/P$ is right uniform
(see Proposition~\ref{monoform characterization}($5'$)). Now because the domain
$R/P$ is right uniform, it is right Ore.

Conversely, suppose that $S:=R/P$ is a right Ore domain, with right division
ring of quotients $Q$. Then because $E(S_R)=Q_R$, it is easy to see that
every nonzero map
\[
f\in\Hom(S_R,E(S_R))=\Hom(S_S,Q_S)
\]
must be injective. Thus $S_R$ is monoform, completing the proof.
\end{proof}

\begin{remark}
This result makes it easy to construct an example of a ring with a
completely prime right ideal that is not comonoform. Let $R$ be any domain
that is not right Ore (such as the free algebra generated by two elements
over a field), and let $P=0\lhd R$. Then $P_R$ is completely prime (recall
Proposition~\ref{two-sided completely primes}), but it cannot be right comonoform
by the above.

Incidentally, because every monoform module is uniform, the ring
$R$ constructed in Example~\ref{completely prime not meet-irreducible} is an
example of an \emph{artinian} (hence noetherian) ring with a completely
prime right ideal that is not comonoform. This is to be contrasted with the
non-Ore domain example, which is necessarily non-noetherian.
\end{remark}

Another consequence of this result is that the comonoform right ideals
directly generalize the concept of a prime ideal in a commutative ring,
just like the completely prime right ideals.

\begin{corollary}
In a commutative ring $R$, an ideal $P\lhd R$ is comonoform iff it is
a prime ideal.
\end{corollary}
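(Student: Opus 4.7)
The plan is to deduce this corollary directly from Proposition~\ref{comonoform ideals}, which states that an ideal $P \lhd R$ is comonoform as a right ideal iff $R/P$ is a right Ore domain. Since the statement to prove concerns a commutative ring, essentially all of the work is already done; I need only verify that in the commutative setting, the ``right Ore'' hypothesis collapses to vacuity.

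First I would observe that since $R$ is commutative, so is the factor ring $R/P$. Next, I would note that any commutative domain $D$ is automatically right Ore: for any $a \in D$ and any $s \in D \setminus\{0\}$, the element $as = sa$ lies in both $a(D\setminus\{0\})$ and $sD$, so $aS \cap sD \neq \varnothing$ where $S = D\setminus\{0\}$. Consequently, for a commutative ring $R/P$, the conditions ``$R/P$ is a right Ore domain'' and ``$R/P$ is a domain'' are equivalent.

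Finally, the condition that $R/P$ is a domain is (by definition) exactly the statement that $P$ is a prime ideal of the commutative ring $R$. Chaining these equivalences through Proposition~\ref{comonoform ideals} yields the corollary. There is no substantive obstacle here; the point of the corollary is simply to record the specialization of the previous result to the commutative case, confirming that comonoform right ideals (like completely prime right ideals, via Proposition~\ref{two-sided completely primes}) faithfully extend the classical notion of a prime ideal.
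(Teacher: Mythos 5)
Your proof is correct and matches the paper's intended argument: the corollary is stated as an immediate consequence of Proposition~\ref{comonoform ideals}, using exactly the observation that for a commutative factor ring $R/P$ the conditions ``right Ore domain'' and ``domain'' coincide, and the latter is the definition of $P$ being prime.
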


\separate

The Completely Prime Ideal Principle~\ref{CPIP} gives us a method for
exploring the existence of completely prime right ideals. We will provide
a similar tool for studying the existence of the more special comonoform
right ideals in Theorem~\ref{divisible Oka families}. The idea is that comonoform
right ideals occur as the right ideals that are maximal in the complement of
right Oka families that satisfy an extra condition, defined below.

\begin{definition}
A family $\F$ of right ideals in a ring $R$ is \emph{divisible} if, for all
$a\in R$,
\[
I \in \F \implies a^{-1}I \in \F.
\]
\end{definition}

The next lemma is required to prove the ``stronger PIP'' for divisible right
Oka families.

\begin{lemma}
\label{ideal quotient lemma}
In a ring $R$, suppose that $I$ and $K$ are right ideals and that $a\in R$.
Then
\[
K \supseteq a^{-1}I \iff K=a^{-1}J \text{ for some right ideal } J_R\supseteq I.
\]
\end{lemma}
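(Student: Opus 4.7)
The plan is to prove the two directions separately, with the reverse direction ($\Leftarrow$) being essentially immediate and the forward direction ($\Rightarrow$) requiring us to produce an explicit candidate for $J$.

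For ($\Leftarrow$), observe that the operation $J \mapsto a^{-1}J$ is inclusion-preserving: if $I \subseteq J$ then for any $r \in a^{-1}I$ we have $ar \in I \subseteq J$, so $r \in a^{-1}J$. Thus $K = a^{-1}J \supseteq a^{-1}I$ follows at once.

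For ($\Rightarrow$), assume $K \supseteq a^{-1}I$. The natural candidate is
\[
J := I + aK.
\]
First I would check that $J$ is a right ideal of $R$ containing $I$: the set $aK$ is a right ideal because $K$ is (as $(ak)r = a(kr) \in aK$ for $k \in K$, $r \in R$), and a sum of right ideals is a right ideal; the containment $J \supseteq I$ is obvious. Then I would verify $K = a^{-1}J$ by two containments. The inclusion $K \subseteq a^{-1}J$ follows because $aK \subseteq J$ by construction. For $a^{-1}J \subseteq K$, take $r \in a^{-1}J$, so that $ar = i + ak$ for some $i \in I$ and $k \in K$; then $a(r-k) = i \in I$, so $r - k \in a^{-1}I \subseteq K$ (using the hypothesis), and hence $r \in K$.

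The only ``obstacle,'' really just a point to be careful about, is ensuring that $J = I + aK$ is genuinely a right ideal — this is why one needs $aK$ (and not merely $\{ak : k \in K\}$ regarded as a set) to be closed under right multiplication, which works because $K$ itself is a right ideal. Once this bookkeeping is in place, the proof is a direct application of the definition of $a^{-1}(\cdot)$.
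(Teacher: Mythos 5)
Your proof is correct and follows essentially the same route as the paper: the same choice $J := I + aK$, the same two containments, and the same manipulation $a(r-k)=i$ to conclude $r-k\in a^{-1}I\subseteq K$. The extra remark that $aK$ is a right ideal is fine bookkeeping that the paper leaves implicit.
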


\begin{proof}
If $K=a^{-1}J$ for some $J\supseteq I$, then clearly $K = a^{-1}J\supseteq a^{-1}I$.
Conversely, suppose that $K\supseteq a^{-1}I$. Then for $J:=I+aK$, we claim
that $K=a^{-1}J$. Certainly $K\subseteq a^{-1}J$. So suppose that $x\in a^{-1}J$.
Then $ax\in J=I+aK$ implies that $ax=i+ak$ for some $i\in I,\ k\in K$. Because 
$a(x-k)=i$, we see that $x-k\in a^{-1}I$. Hence $x=(x-k)+k\in a^{-1}I + K = K$.
\end{proof}

\begin{theorem}
\label{divisible Oka families}
Let $\F$ be a divisible right Oka family. Then every $P\in\Max(\F')$
is a comonoform right ideal.
\end{theorem}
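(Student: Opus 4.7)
The plan is to argue by contradiction. Suppose $P \in \Max(\F')$ yet $R/P$ is not monoform; I will then use divisibility together with the Oka property to force $P \in \F$, contradicting $P \in \F'$.

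First I would translate the failure of monoformity into a concrete statement about right ideals. By characterization~(3) of Proposition~\ref{monoform characterization}, non-monoformity of $R/P$ yields elements $\bar a, \bar c \in R/P \setminus \{0\}$ and $\bar b \in R/P$ with $\bar a \cdot \bar b^{-1}(\bar c R) = 0$. Unwinding the definitions, there exist $a, c \in R \setminus P$ and $b \in R$ satisfying
\[
b^{-1}(cR + P) \;\subseteq\; a^{-1}P.
\]
Setting $J := cR + P$, the hypothesis $c \notin P$ gives $J \supsetneq P$, so $J \in \F$ by maximality of $P$ in $\F'$.

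The crucial move is to apply Lemma~\ref{ideal quotient lemma} with $K := a^{-1}P$, $I := J$, and (in the lemma's notation) the distinguished element taken to be $b$: from $a^{-1}P \supseteq b^{-1}J$ the lemma produces a right ideal $J' \supseteq J$ with $a^{-1}P = b^{-1}J'$. Since $J' \supseteq J \supsetneq P$ we have $J' \in \F$, and divisibility of $\F$ then forces $a^{-1}P = b^{-1}J' \in \F$. Meanwhile $a \notin P$ makes $P + aR \supsetneq P$, so $P + aR \in \F$ as well. Applying the Oka property to the right ideal $P$ with element $a$ now yields $P \in \F$, the desired contradiction. Therefore $R/P$ is monoform, i.e., $P$ is comonoform.

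The main conceptual obstacle is that divisibility alone only gives $b^{-1}J \in \F$, which merely sits \emph{inside} $a^{-1}P$ and cannot be fed directly into the Oka step at $P$. Lemma~\ref{ideal quotient lemma} is precisely what is needed to promote the containment $b^{-1}J \subseteq a^{-1}P$ to an honest identity $a^{-1}P = b^{-1}J'$ with $J' \in \F$, so that divisibility applies to a genuine member of $\F$ and delivers $a^{-1}P$ itself. Once this lift is available, the proof is a near-verbatim copy of the CPIP argument with an extra application of divisibility inserted.
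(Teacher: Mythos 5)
Your proof is correct and follows essentially the same route as the paper: both reduce failure of monoformity of $R/P$ to a containment $y^{-1}I \subseteq x^{-1}P$ with $I \supsetneq P$ and $x \notin P$, invoke Lemma~\ref{ideal quotient lemma} to rewrite $x^{-1}P$ as $y^{-1}J'$ for some $J' \supseteq I$, and then combine maximality, divisibility, and the Oka property to force $P \in \F$. The only cosmetic difference is that you use characterization~(3) of Proposition~\ref{monoform characterization} (density against cyclic submodules), while the paper uses characterization~(1); the argument is otherwise identical.
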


\begin{proof}
Let $P\in\Max(\F')$. To show that $R/P\neq 0$ is monoform, it is sufficient
by Proposition~\ref{monoform characterization}(1) to show that every nonzero
submodule $I/P\subseteq R/P$ is dense. That is, for any $0\neq x+P\in R/P$ and
any $y+P\in R/P$, we wish to show that $(x+P)\cdot (y+P)^{-1}(I/P)\neq 0$.
It is straightforward
to see that $(y+P)^{-1}(I/P)=y^{-1}I$. Thus it is enough to show, for any
right ideal $I\supsetneq P$ and elements $x\in R\setminus P$ and $y\in R$,
that $x\cdot y^{-1}I \nsubseteq P$.

Assume for contradiction that $x\cdot y^{-1}I \subseteq P$ for such $x$,
$y$, and $I$. Then $y^{-1}I\subseteq x^{-1}P$, and Lemma~\ref{ideal quotient lemma}
shows that $x^{-1}P=y^{-1}J$
for some right ideal $J\supseteq I$. Since $P\in\Max(\F')$, the fact
that $J\supseteq I\supsetneq P$ implies that $J\in\F$. Because $\F$ is
divisible, $x^{-1}P=y^{-1}J\in\F$. Also $x\notin P$ and maximality of
$P$ give $P+xR\in\F$. Since $\F$ is right Oka we conclude that $P\in\F$,
a contradiction.
\end{proof}

As with the more general completely prime right ideals, there is a ``Supplement''
that accompanies this ``stronger PIP.'' 
We omit its proof, which parallels that of Theorem~\ref{CPIP supplement}.

\begin{theorem}
\label{DPIP supplement}
Let $\F$ be a divisible right Oka family in a ring $R$ such that every nonempty
chain of right ideals in $\F'$ (with respect to inclusion) has an upper bound in
$\F'$. Let $\setS$ denote the set of comonoform right ideals of $R$.
\begin{itemize}
\item[\normalfont (1)] Let $\F_{0}$ be a semifilter of right ideals in $R$. If
$\setS\cap\F_{0}\subseteq\F$, then $\F_{0}\subseteq \F$.
\item[\normalfont (2)] For $J_{R}\subseteq R$, if all right ideals in $\setS$
containing $J$ (resp.\ properly containing $J$) belong to $\F$, then all right
ideals containing $J$ (resp.\ properly containing $J$) belong to $\F$.
\item[\normalfont (3)] If $\setS\subseteq\F$, then $\F$ consists of all right
ideals of $R$.
\end{itemize}
\end{theorem}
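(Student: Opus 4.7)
The plan is to imitate the proof of Theorem~\ref{CPIP supplement} essentially verbatim, with Theorem~\ref{divisible Oka families} (the ``stronger PIP'' for divisible right Oka families) taking the place of the CPIP as the mechanism that forces maximal elements of $\F'$ into $\setS$. The only structural novelty is that $\setS$ now denotes the set of comonoform right ideals rather than the set of completely prime ones, and this change is absorbed entirely by the step that invokes the Prime-Ideal-Principle-type theorem.

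For part~(1), I would suppose for contradiction that there exists a right ideal $I \in \F_0 \setminus \F$, i.e.\ $I \in \F' \cap \F_0$. The hypothesis that every nonempty chain in $\F'$ has an upper bound in $\F'$ lets me apply Zorn's lemma within $\{J_R \supseteq I : J \in \F'\}$ to obtain some $P \in \Max(\F')$ with $P \supseteq I$. Since $\F$ is a divisible right Oka family, Theorem~\ref{divisible Oka families} guarantees that $P$ is comonoform, so $P \in \setS$. Because $\F_0$ is a semifilter and $I \in \F_0$, the containment $P \supseteq I$ forces $P \in \F_0$. Hence $P \in \setS \cap \F_0 \subseteq \F$, contradicting $P \in \F'$, and we conclude $\F_0 \subseteq \F$.

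Parts~(2) and~(3) are then immediate specializations of~(1), obtained by taking $\F_0$ to be, respectively, the collection of right ideals containing $J$, the collection of right ideals properly containing $J$, or the collection of all right ideals of $R$. Each of these families is trivially a semifilter of right ideals: upward closure under inclusion is automatic for ``contains $J$'' and ``all right ideals,'' and for ``properly contains $J$'' it holds because any right ideal larger than one properly containing $J$ still properly contains $J$. Applying~(1) with each of these choices of $\F_0$ yields the three conclusions.

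I do not foresee any serious obstacle: the argument parallels that of Theorem~\ref{CPIP supplement} line for line, and the only substantive ingredient being swapped in is the divisible PIP in place of the CPIP, whose statement already delivers the required upgrade from ``maximal in $\F'$'' to ``comonoform.''
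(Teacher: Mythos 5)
Your proof is correct and is exactly the argument the paper intends: the paper omits the proof of Theorem~\ref{DPIP supplement}, stating only that it parallels Theorem~\ref{CPIP supplement}, and your write-up carries out that parallel faithfully, with Theorem~\ref{divisible Oka families} replacing the CPIP to place the maximal element of $\F'$ in $\setS$. No gaps; the Zorn's lemma step and the verification that the three choices of $\F_0$ in parts (2) and (3) are semifilters are handled correctly.
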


To apply the last two theorems, we must provide some ways to construct
divisible right Oka families. The first method is extremely straightforward.

\begin{remark}
Let $\E\subseteq\M_R$ be a class of right $R$-modules that is closed under
extensions and closed under passing to submodules.
Then for $\C:=\E\cap\M_R^c$, the right Oka family $\F_{\C}$ is divisible. For if
$I\in\F_{\C}$ and $x\in R$, then $R/x^{-1}I$ is isomorphic to the submodule
$(I+xR)/I$ of $R/I\in\C\subseteq\E$. Then by hypothesis, $R/x^{-1}I\in\E\cap\M_R^c=\C$,
proving that $x^{-1}I\in\F_{\C}$.
\end{remark}

The above method applies immediately to many of the families $\F_{\C}$ which we
have already investigated. To begin with, for a $k$-ring $R$, all of the finiteness
properties listed in Example~\ref{finiteness conditions}  pass to submodules, with
the exception of finite generation~(1A). Thus a right ideal $I$ maximal with respect
to $R/I$ not having one of those properties is comonoform.

We can apply this specifically to rings with the so-called \emph{right
restricted minimum condition}; these are the rings $R$ such that $R/I$ is an
artinian right $R$-module for all right ideals $I\neq 0$. If such a ring $R$
is not right artinian, we see that the zero ideal is in $\Max(\F')$ where $\F$
is the divisible Oka family of right ideals $I\subseteq R$ such that $R/I$ is
an artinian $R$-module. Thus the zero ideal is right comonoform by
Theorem~\ref{divisible Oka families}. Hence $R$ is a right Ore domain by
Proposition~\ref{comonoform ideals}. The fact that such a ring is a right
Ore domain was proved by A.\,J.~Ornstein in~\cite[Thm.~13]{Ornstein} as
a generalization of a theorem of Cohen~\cite[Cor.~2]{Cohen}.

\begin{corollary}[Ornstein]
\label{RRM domain}
If a ring $R$ satisfies the right restricted minimum condition and is not right
artinian, then $R$ is a right Ore domain.
\end{corollary}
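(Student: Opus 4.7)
The plan is to deploy Theorem~\ref{divisible Oka families} with the family
\[
\F := \{ I_R \subseteq R : R/I \text{ is an artinian right } R\text{-module} \},
\]
and then convert the conclusion ``$0$ is comonoform'' into ``$R$ is right Ore domain'' via Proposition~\ref{comonoform ideals}. So the verification breaks into three bite-sized steps: (a) $\F$ is a divisible right Oka family; (b) $0\in\Max(\F')$ under the hypotheses; (c) Proposition~\ref{comonoform ideals} finishes the job.

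For (a), I would invoke Example~\ref{finiteness conditions}(3B), which already asserts that $\F$ is right Oka (the class of artinian modules is closed under extensions, so Remark~\ref{creating Oka families} applies). Divisibility then follows from the Remark immediately preceding Example~\ref{finiteness conditions}'s follow-up discussion: the class of artinian right $R$-modules is not only extension-closed but also closed under passing to submodules, and for any $I\in\F$ and $x\in R$, the isomorphism $R/x^{-1}I \cong (I+xR)/I \subseteq R/I$ places $R/x^{-1}I$ inside an artinian module, hence $x^{-1}I\in\F$.

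For (b), the right restricted minimum condition says exactly that $R/I$ is artinian for every nonzero $I_R\subseteq R$; thus every nonzero right ideal lies in $\F$, so $\F'$ can contain at most the zero ideal. Since $R$ is \emph{not} right artinian, $R/0 = R$ is not artinian, so $0\in\F'$, and therefore $\F' = \{0\}$. In particular $0\in\Max(\F')$. Applying Theorem~\ref{divisible Oka families} to $\F$, the zero right ideal is comonoform, i.e.\ $R_R$ is a monoform module.

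For (c), since $0\lhd R$ is an ideal that happens to be comonoform as a right ideal, Proposition~\ref{comonoform ideals} yields that $R/0 = R$ is a right Ore domain, as desired. The only genuine content here, beyond bookkeeping, is the divisibility check in (a); once that is in place, all three steps are essentially automatic, and there is no real obstacle to overcome.
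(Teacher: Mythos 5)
Your proposal is correct and follows essentially the same route as the paper: the paper also takes the divisible right Oka family of right ideals $I$ with $R/I$ artinian (divisibility via the remark that extension-closed, submodule-closed classes give divisible families, which you spell out with $R/x^{-1}I\cong (I+xR)/I$), notes that the restricted minimum condition plus non-artinianness puts $0$ in $\Max(\F')$, and then applies Theorem~\ref{divisible Oka families} and Proposition~\ref{comonoform ideals}.
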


In addition, for a multiplicative set $S\subseteq R$ the class of $S$-torsion
modules (see Example~\ref{S-torsion example}) is closed under extensions and
submodules. So a right ideal $I$ maximal with respect to $R/I$ not being
$S$-torsion is comonoform. Notice that this is true {\em whether or not
the set $S$ is right Ore.} (However, if $S$ is not right Ore then we do not
have the characterization that $R/I$ is $S$-torsion iff $I\cap S\neq\varnothing$.)

For another example, fix a multiplicative set $S\subseteq R$, which again
need not be right Ore. A module $M_R$ is said to be \emph{$S$-torsionfree}
if, for any $m\in M$ and $s\in S$, $ms=0$ implies $m=0$. The class of
$S$-torsionfree modules is easily shown to be closed under extensions. Hence
the family $\F$ of right ideals in $R$ such that $R/I$ is $S$-torsionfree
is a right Oka family. Notice that $\F$ can alternatively be described as
\[
\F = \{ I_R \subseteq R : \textnormal{for $r\in R$ and $s\in S$, } rs \in I \implies r \in I \}.
\]
Furthermore, $\F$ is divisible because any submodule of a torsionfree
module is torsionfree. So every right ideal $P\subseteq R$ with $P\in\Max(\F')$ is
comonoform. 

\separate

A second effective method of constructing a divisible right Oka family is by
defining it in terms of certain families of two-sided ideals. This is achieved
in Proposition~\ref{(P_1) family of ideals} below.
Given a right ideal $I$ of $R$, recall that the largest ideal of $R$ contained
in $I$ is called the \emph{core of $I$,} denoted $\core(I)$. It is straightforward
to check that $\core(I)=\ann(R/I)$ for any $I_R\subseteq R$. 

\begin{lemma}
\label{divisible semifilters generated by ideals}
Let $\F$ be a semifilter of right ideals in a ring $R$ that is generated
as a semifilter by two-sided ideals---that is to say, there exists a set
$\G$ of two-sided ideals of $R$ such that
\begin{align*}
\F &= \{I_R\subseteq R:I\supseteq J\text{ for some }J\in\G\} \\
&= \{I_R \subseteq R : \core(I) \in \G\}.
\end{align*}
Then $\F$ is divisible.
\end{lemma}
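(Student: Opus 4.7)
The plan is that divisibility will follow immediately from the fact that left multiplication preserves two-sided ideals, together with the semifilter description of $\F$ in terms of $\G$.

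First I would fix an arbitrary $I \in \F$ and $a \in R$; the goal is to show $a^{-1}I \in \F$. Using the first (equivalent) description of $\F$, pick a two-sided ideal $J \in \G$ with $J \subseteq I$. The key observation is then that $J \subseteq a^{-1}I$: indeed, for any $j \in J$ we have $aj \in aJ \subseteq J \subseteq I$ because $J$ is a \emph{two-sided} ideal, and this says precisely that $j \in a^{-1}I$.

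Having shown that $a^{-1}I$ is a right ideal containing the element $J$ of $\G$, the semifilter description of $\F$ immediately gives $a^{-1}I \in \F$, completing the verification of divisibility.

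There is essentially no obstacle here; the whole argument rests on the single computation $aJ \subseteq J$ for two-sided $J$. The only subtlety worth mentioning in the write-up is that we are free to work with the first description of $\F$ rather than with $\core$; the two are equivalent because $\G$ is assumed upward-closed among two-sided ideals (otherwise the stated equivalence of the two characterizations would fail), and $\core(a^{-1}I) \supseteq J$ then lies in $\G$ as well if one prefers to verify membership via the second description.
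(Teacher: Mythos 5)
Your proof is correct and follows essentially the same route as the paper: both arguments hinge on the single observation that $aJ \subseteq J \subseteq I$ for a two-sided $J \in \G$ contained in $I$, which gives $J \subseteq a^{-1}I$ and hence $a^{-1}I \in \F$ by the semifilter description. Nothing further is needed.
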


\begin{proof}
The equality of the two descriptions of $\F$ above follows from the fact
that $\G$ is a semifilter.
Suppose that $I\in\F$, so that there exists $J\in\G$ such that $I\supseteq J$.
Then for any $a\in R$, $aJ\subseteq J\subseteq I$ implies that $J\subseteq a^{-1}I$.
It follows that $a^{-1}I\in\F$, and $\F$ is divisible.
\end{proof}

By analogy with Definition~\ref{semifilter definition}, we define a \emph{semifilter of (two-sided) ideals}
in a ring $R$ to be a family $\G$ of ideals of $R$ such that, for $I,J\lhd R$,
$I\in\G$ and $J\supseteq I$ imply $J\in\G$. As in~\cite{LR} we define the following
property of a family $\G$ of two-sided ideals in $R$:
\begin{itemize}
\item[$(P_1)$:] $\G$ is a semifilter of ideals that is closed under pairwise products
and that contains the ideal $R$ (equivalently, is nonempty).
\end{itemize}
In~\cite[Thm.~2.7]{LR}, it was shown that any $(P_1)$ family of
ideals in a commutative ring is an Oka family. 
The following shows how to define a right Oka family from a $(P_1)$ family of
ideals in a noncommutative ring.

\begin{proposition}
\label{(P_1) family of ideals}
Let $\G$ be a family of ideals in a ring $R$ satisfying $(P_1)$. Then the
semifilter $\F$ of right ideals generated by $\G$ (as in
Lemma~\ref{divisible semifilters generated by ideals}) is a divisible right
Oka family. Thus, every right ideal in $\Max(\F')$ is comonoform.
\end{proposition}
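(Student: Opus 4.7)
The plan is to verify the two defining properties (divisibility and the Oka property) separately, and then invoke Theorem~\ref{divisible Oka families} to obtain the final assertion about $\Max(\F')$.

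Divisibility is immediate. Since $\G$ consists of two-sided ideals and generates $\F$ as a semifilter, Lemma~\ref{divisible semifilters generated by ideals} applies verbatim to give that $\F$ is divisible. Note also that $R \in \F$ because $R \in \G$ by $(P_1)$.

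The real content is the Oka property. Given $I_R \subseteq R$ and $a \in R$ with $I + aR, a^{-1}I \in \F$, my plan is to witness $I \in \F$ by explicitly producing a member of $\G$ contained in $I$. By definition of $\F$, choose $J_1, J_2 \in \G$ with $J_1 \subseteq I + aR$ and $J_2 \subseteq a^{-1}I$. The candidate ideal is $J := J_1 J_2$, which lies in $\G$ thanks to closure under pairwise products. The inclusion I need to verify is $J \subseteq I$, which unfolds as
\[
J_1 J_2 \;\subseteq\; (I + aR)\, J_2 \;=\; I J_2 + a(R J_2) \;\subseteq\; I + a J_2 \;\subseteq\; I + I \;=\; I,
\]
where $I J_2 \subseteq I$ because $I$ is a right ideal and $J_2 \subseteq R$, $R J_2 \subseteq J_2$ because $J_2$ is a left ideal (being two-sided), and $a J_2 \subseteq I$ is exactly the statement $J_2 \subseteq a^{-1}I$. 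This produces $J \in \G$ with $J \subseteq I$, so $I \in \F$.

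The key trick — and the only place any real ring-theoretic work happens — is the product computation above, which genuinely uses both that $\G$ is closed under products and that the members of $\G$ are two-sided (needed to absorb the $R$ on the right). Once the Oka property is in hand, divisibility has already been established, so $\F$ is a divisible right Oka family; Theorem~\ref{divisible Oka families} then immediately yields that every $P \in \Max(\F')$ is comonoform.
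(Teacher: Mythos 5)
Your proof is correct, but it verifies the Oka property by a different route than the paper. You check Definition~\ref{Oka family definition} directly: choosing $J_1\subseteq I+aR$ and $J_2\subseteq a^{-1}I$ in $\G$ and computing $J_1J_2\subseteq (I+aR)J_2\subseteq I+aJ_2\subseteq I$, so that the product $J_1J_2\in\G$ witnesses $I\in\F$. The paper instead works at the level of modules: it forms the class $\E=\{M_R:\ann(M)\in\G\}$, shows $\E$ is closed under extensions via the inclusion $\ann(M)\supseteq\ann(N)\cdot\ann(L)$ for an exact sequence $0\to L\to M\to N\to 0$, observes $\ann(R/I)=\core(I)$ so that $\F=\F_{\C}$ for $\C=\E\cap\M_R^c$, and then invokes Theorem~\ref{extension correspondence}. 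The two arguments rest on the same ring-theoretic kernel --- your inclusion $J_1J_2\subseteq I$ is precisely the cyclic-module instance of the annihilator inclusion, with the same crucial ordering of the factors and the same use of two-sidedness to absorb $R$ --- but your version is more elementary and self-contained, needing neither the correspondence theorem nor the notion of core, while the paper's version fits the result into its general machinery (Remark~\ref{creating Oka families}) and parallels the commutative $(P_1)$ argument of \cite{LR}. Your handling of divisibility (via Lemma~\ref{divisible semifilters generated by ideals}), of $R\in\F$, and of the final appeal to Theorem~\ref{divisible Oka families} matches the paper exactly.
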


\begin{proof}
Let $\E$ be the class of right $R$-modules $M$ such that $\ann(M)\in\G$.
We claim that $\E$ is closed under extensions in $\M_R$. Indeed, let $L,\ N\in\E$ and
suppose $0\to L\to M\to N\to 0$ is an exact sequence of right $R$-modules.
We want to conclude that $M\in\E$. Because $\ann(L)$ and $\ann(N)$ belong
to $\G$, the fact that $\G$ is $(P_1)$ means that
$\ann(M)\supseteq\ann(N)\cdot\ann(L)$ must also lie in $\G$. Thus $M\in\E$
as desired.

Now any cyclic module $R/I$ has annihilator $\ann(R/I)=\core(I)$. So for
$\C:=\E\cap\M_R^c$ we see that our family is $\F=\F_{\C}$. Hence $\F$ is a
right Oka family.
Lemma~\ref{divisible semifilters generated by ideals} implies that $\F$ is
divisible. The last sentence of the proposition now follows from
Theorem~\ref{divisible Oka families}.
\end{proof}

We will apply the result above to a special example of such a family $\G$
of ideals. For a ring $R$, recall that a subset $S\subseteq R$ is called an
\emph{$m$-system} if $1\in S$ and for any $s,t\in S$ there exists $r\in R$
such that $srt\in S$. It is well-known that an ideal $P\lhd R$ is prime iff
$R\setminus P$ is an $m$-system.

\begin{corollary}
\label{m-system corollary}
\textnormal{(1)} For an $m$-system $S$ in a ring $R$, the family $\F$ of
right ideals $I$ such that $\core (I) \cap S \neq \varnothing$ is a divisible
right Oka family. A right ideal maximal with respect to having its core disjoint
from $S$ is comonoform. 

\textnormal{(2)} For a prime ideal $P$ of a ring $R$, the family of all
right ideals $I_R$ such that $\core(I)\nsubseteq P$ is a divisible right Oka family.
A right ideal $I$ maximal with respect to $\core(I)\subseteq P$ is comonoform.
In particular, if $R$ is a prime ring, a right ideal maximal with respect to
$\core(I)\neq 0$ is comonoform.
\end{corollary}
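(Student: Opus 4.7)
The plan is to deduce both parts directly from Proposition~\ref{(P_1) family of ideals} by exhibiting an appropriate $(P_1)$ family $\G$ of two-sided ideals whose generated semifilter of right ideals coincides with the $\F$ in the statement. Recall from Lemma~\ref{divisible semifilters generated by ideals} that the semifilter of right ideals generated by a family $\G$ of ideals is $\{I_R\subseteq R : \core(I)\in\G\}$, so the task reduces to choosing $\G$ whose membership condition reads off the core of $I$ in the desired way.

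For part~(1), I would take
\[
\G := \{J\lhd R : J\cap S\neq\varnothing\},
\]
so that $\{I_R : \core(I)\in\G\}$ is exactly the family $\F$ in the statement. The $(P_1)$ property of $\G$ breaks into three checks: $R\in\G$ because $1\in S$; the semifilter property is immediate since enlarging $J$ preserves the intersection with $S$; and closure under products is where the $m$-system hypothesis is essential. Given $J_1,J_2\in\G$, pick $s_i\in J_i\cap S$; since $S$ is an $m$-system there exists $r\in R$ with $s_1 r s_2\in S$, and clearly $s_1 r s_2\in J_1 J_2$, so $J_1 J_2\in\G$. Proposition~\ref{(P_1) family of ideals} then yields that $\F$ is a divisible right Oka family, and Theorem~\ref{divisible Oka families} gives that every $I\in\Max(\F')$ is comonoform.

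For part~(2), I would observe that $S:=R\setminus P$ is an $m$-system precisely because $P$ is a prime ideal (a standard characterization of primeness). Moreover, for any ideal $J\lhd R$, the condition $J\cap S\neq\varnothing$ is equivalent to $J\nsubseteq P$, so the family in~(2) coincides with the family in~(1) applied to this particular $S$. Thus~(2) is a direct specialization of~(1), and the final sentence follows by setting $P=0$ (which is a prime ideal in any prime ring), in which case $\core(I)\nsubseteq 0$ is the condition $\core(I)\neq 0$.

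The only real work is the closure of $\G$ under products, and in each part this is exactly what the $m$-system (respectively, primeness) hypothesis is designed to deliver; the rest is bookkeeping combining Lemma~\ref{divisible semifilters generated by ideals}, Proposition~\ref{(P_1) family of ideals}, and Theorem~\ref{divisible Oka families}.
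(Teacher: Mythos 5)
Your proposal is correct and follows essentially the same route as the paper: both parts are obtained by applying Proposition~\ref{(P_1) family of ideals} (together with Lemma~\ref{divisible semifilters generated by ideals} and Theorem~\ref{divisible Oka families}) to the family $\G$ of two-sided ideals meeting the $m$-system $S$, with part~(2) deduced from part~(1) via $S=R\setminus P$. The only difference is that you spell out the verification that $\G$ satisfies $(P_1)$ (the product check $s_1(rs_2)\in J_1J_2$ with $s_1rs_2\in S$), which the paper leaves as ``certainly a $(P_1)$ family''; that verification is correct.
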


\begin{proof}
For (1), we can apply Proposition~\ref{(P_1) family of ideals} to the
family $\G$ of ideals having nonempty intersection with the $m$-system $S$, which is
certainly a $(P_1)$ family of ideals. Then~(2) follows from~(1) if we
let $S=R\setminus P$, which is an $m$-system when $P$ is a prime ideal.
\end{proof}

Another application of Proposition~\ref{(P_1) family of ideals} involves
the notion of boundedness.
Recall that a ring $R$ is said to be \emph{right bounded} if every essential
right ideal contains a two-sided ideal that is right essential. (Another way
to say this is that if $I_R\subseteq R$ is essential, then $\core(I)$
is right essential.) Then one can characterize whether certain types of rings
are right bounded in terms of their comonoform right ideals. 
Given a module $M_R$, we write $N\subseteq_{e}M$ to mean that $N$ is an
essential submodule of $M$.

\begin{proposition}
\label{bounded criterion}
Let $R$ be a ring in which the set of ideals $\{J \lhd R : J_R \subseteq_e R_R\}$
is closed under squaring (e.g.\ a semiprime ring or a right nonsingular ring),
and suppose that every ideal of $R$ that is right essential is finitely generated
as a right ideal (this holds, for instance, if $R$ is right noetherian). Then
$R$ is right bounded iff every essential comonoform right ideal of $R$ has right
essential core. 
\end{proposition}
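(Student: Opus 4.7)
The forward (``only if'') direction is immediate from the definition of right bounded: every essential right ideal of $R$ has right essential core, in particular every comonoform essential one does.

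For the converse, my plan is to apply the Divisible Prime Ideal Principle Supplement (Theorem~\ref{DPIP supplement}(1)) to a suitable divisible right Oka family. First I would set $\G := \{J \lhd R : J_R \subseteq_e R_R\}$ and verify that $\G$ satisfies the property $(P_1)$. It is a nonempty semifilter of ideals trivially. For closure under pairwise products, given $J_1, J_2 \in \G$, the intersection $J_1 \cap J_2$ is right essential (an intersection of essential submodules of $R_R$), so by the squaring hypothesis $(J_1 \cap J_2)^2 \in \G$; since $(J_1 \cap J_2)^2 \subseteq J_1 J_2$ and $\G$ is a semifilter of ideals, it follows that $J_1 J_2 \in \G$. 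Proposition~\ref{(P_1) family of ideals} then yields that
\[
\F := \{I_R \subseteq R : \core(I) \in \G\}
\]
is a divisible right Oka family.

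Next I would take $\F_0$ to be the semifilter of essential right ideals of $R$ (it is a semifilter since overideals of essential right ideals are essential) and let $\setS$ denote the set of comonoform right ideals. The hypothesis that every essential comonoform right ideal has right essential core is precisely the statement $\setS \cap \F_0 \subseteq \F$ required to invoke Theorem~\ref{DPIP supplement}(1). The conclusion $\F_0 \subseteq \F$ would then say that every essential right ideal has right essential core, i.e., that $R$ is right bounded.

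The main obstacle will be verifying the remaining hypothesis of the Supplement, namely that every nonempty chain in $\F'$ has an upper bound in $\F'$; this is exactly where the finite generation assumption on essential ideals enters. Given an ascending chain $\{I_\alpha\}$ in $\F'$, set $I := \bigcup_\alpha I_\alpha$ and suppose for contradiction that $\core(I) \in \G$. By hypothesis $\core(I)$ is then finitely generated as a right ideal, say $\core(I) = \sum_{k=1}^n x_k R$; each $x_k$ lies in some $I_{\alpha_k}$, so one may take an $\alpha$ above all the $\alpha_k$ to obtain $\core(I) \subseteq I_\alpha$. Since $\core(I)$ is two-sided this forces $\core(I) \subseteq \core(I_\alpha)$, whence $\core(I_\alpha) \in \G$ as $\G$ is a semifilter of ideals, contradicting $I_\alpha \in \F'$. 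Thus $I \in \F'$, the chain condition holds, and the Supplement closes the proof.
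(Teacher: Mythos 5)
Your proof is correct and follows essentially the same route as the paper's: the same $(P_1)$ family $\G$ of ideals that are right essential (with the same observation that $IJ \supseteq (I\cap J)^2$ is right essential), the same divisible right Oka family $\F$ of right ideals with right essential core via Proposition~\ref{(P_1) family of ideals}, and the same application of Theorem~\ref{DPIP supplement}(1) to the semifilter $\F_0$ of essential right ideals; your chain-condition argument merely spells out a step the paper leaves implicit. The only thing the paper additionally checks is the parenthetical claim that semiprime and right nonsingular rings satisfy the squaring hypothesis, which your write-up leaves aside.
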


\begin{proof}
Assume that $R$ satisfies the two stated hypotheses. We claim that the ideal
family $\{J\lhd R : J_R\subseteq_e R\}$ is in fact closed under pairwise products.
Indeed, if $I,J\lhd R$ are essential as right ideals, then their product $IJ$
contains the essential right ideal $(I\cap J)^2$ and thus is right essential.
This allows us to apply Proposition~\ref{(P_1) family of ideals} to say that the
family $\F$ of right ideals with right essential core is a divisible right Oka family.
Next, the assumption that every ideal that is right essential is right finitely
generated implies that the union of any nonempty chain of right ideals in $\F'$
lies in $\F'$. Also, the set $\F_0$ of essential right ideals is a semifilter. Thus
the statement of the proposition, excluding the first parenthetical remark, follows
from Theorem~\ref{DPIP supplement}(1).

It remains to verify that a semiprime or right nonsingular ring $R$ satisfies
the first hypothesis. Suppose that $J\lhd R$ is right essential, and let
$I_R$ be a right ideal such that $I\cap J^2=0$. Then
\[
(I \cap J)^2 \subseteq I \cap J^2 =0 \quad \text{and}
\quad (I \cap J) J \subseteq I \cap J^2 = 0
\]
respectively imply that $I \cap J$ squares to zero and has essential right
annihilator. Thus if $R$ is either semiprime or right nonsingular, then
$I\cap J=0$. Because $J$ is right essential, we conclude that $I=0$. Hence
$J^2$ is right essential as desired.
\end{proof}

\begin{corollary}
\label{bounded prime criterion}
A prime right noetherian ring $R$ is right bounded iff every essential
comonoform right ideal of $R$ has nonzero core.
\end{corollary}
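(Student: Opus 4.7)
The plan is to deduce this corollary directly from Proposition~\ref{bounded criterion}, so the first task is to verify its two hypotheses in the prime right noetherian setting. Since a prime ring is in particular semiprime, the parenthetical remark in Proposition~\ref{bounded criterion} shows that the collection $\{J\lhd R : J_R\subseteq_e R_R\}$ is closed under squaring. And since $R$ is right noetherian, every two-sided ideal is finitely generated as a right ideal, so the second hypothesis holds as well. Hence Proposition~\ref{bounded criterion} applies and asserts that $R$ is right bounded iff every essential comonoform right ideal of $R$ has right essential core.

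The remaining step is to identify ``right essential core'' with ``nonzero core'' under the primeness hypothesis. For this I would prove the following small lemma: in any prime ring $R$, every nonzero ideal $J\lhd R$ is automatically essential as a right ideal. Given a nonzero right ideal $I_R\subseteq R$, pick $0\neq i\in I$ and $0\neq j\in J$. Primeness yields $iRj\neq 0$. Since $I$ is a right ideal we have $iR\subseteq I$, and so $iRj\subseteq Ij\subseteq I$; and since $J$ is a two-sided ideal we have $Ij\subseteq Rj\subseteq J$. Combining these,
\[
0 \;\neq\; iRj \;\subseteq\; I\cap J,
\]
which shows $J\subseteq_e R_R$.

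With this lemma in hand, the core of any right ideal of a prime ring is nonzero precisely when it is right essential, and so the conclusion of Proposition~\ref{bounded criterion} specializes exactly to the statement of the corollary. There is really no substantive obstacle here: once the hypotheses of the proposition are checked, the work reduces to the one-line verification that nonzero two-sided ideals in a prime ring are right essential.
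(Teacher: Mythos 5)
Your argument is correct and follows essentially the same route as the paper: verify the hypotheses of Proposition~\ref{bounded criterion} (prime $\Rightarrow$ semiprime, right noetherian $\Rightarrow$ right essential ideals are f.g.\ as right ideals) and then use the standard fact that every nonzero ideal of a prime ring is right essential to translate ``right essential core'' into ``nonzero core.'' The only difference is that you write out the easy essentiality lemma ($0\neq iRj\subseteq I\cap J$) that the paper cites as well known.
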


\begin{proof}
It is a well-known (and easy to verify) fact that every nonzero ideal of a
prime ring is right essential. Thus a right ideal of $R$ has right essential
core iff its core is nonzero. Because $R$ is prime and right noetherian, we
can directly apply Proposition~\ref{bounded criterion}.
\end{proof}

In fact, the last result can be directly deduced from Corollary~\ref{m-system corollary}(2).
We chose to include Proposition~\ref{bounded criterion} because it seems to
apply rather broadly.

\separate

Next we will show that certain well-studied families of right ideals are
actually examples of divisible right Oka families, providing a third method of
constructing the latter.
The concept of a \emph{Gabriel filter} of right ideals arises naturally in the study
of torsion theories and the related subject of localization in noncommutative rings.
The definition of these families is recalled below.

\begin{definition}
\label{Gabriel filter definition}
A \emph{right Gabriel filter} (or \emph{right Gabriel topology}) in a ring
$R$ is a nonempty family $\F$ of right ideals of $R$ satisfying
the following four axioms (where $I_R,J_R\subseteq R$):
\begin{itemize}
\item[\normalfont (1)] If $I\in \F$ and $J\supseteq I$ then $J\in \F$;
\item[\normalfont (2)] If $I,J\in \F$ then $I\cap J\in \F$;
\item[\normalfont (3)] If $I\in \F$ and $x\in R$ then $x^{-1}I\in \F$;
\item[\normalfont (4)] If $I\in \F$ and $J_{R}\subseteq R$ is such that
$x^{-1}J\in \F$ for all $x\in I$, then $J\in \F$.
\end{itemize}
\end{definition}

Notice that axiom~(3) above simply states that \emph{a right Gabriel filter is divisible}.
For the reader's convenience, we outline some basic facts regarding right Gabriel filters
and torsion theories that will be used here. Refer to~\cite[VI.1-5]{Stenstroem}
for further details.

Given any right Gabriel filter $\F$ and any module $M_R$, we define a subset
of $M$:
\[
t_{\F}(M) := \{m\in M : \ann (m)\in\F\}.
\]
Axioms (1), (2), and (3) of Definition~\ref{Gabriel filter definition}
guarantee that this is a submodule of $M$, and it is called the
\emph{$\F$-torsion submodule} of $M$. A module $M_R$ is defined to be
\emph{$\F$-torsion} if $t_{\F}(M)=M$ or \emph{$\F$-torsionfree} if
$t_{\F}(M)=0$.
One can easily verify that {\em for a right Gabriel filter $\F$, a right
ideal $I\subseteq R$ lies in $\F$ iff $R/I$ is $\F$-torsion}. 

For any Gabriel filter $\F$, it turns out that the class 
\[
\T_{\F}:=\{ M_R : \text{$M$ is $\F$-torsion, i.e.\ } M=t_{\F}(M)\} 
\]
of all $\F$-torsion right $R$-modules satisfies the axioms of a
\emph{hereditary torsion class}. While we shall not define this term
here, it is equivalent to saying that the class $\T_{\F}$ is
\emph{closed under factor modules, direct sums of arbitrary families, and
extensions (in $\M_R$).} (Thus the reader may simply take this to be the
definition of a hereditary torsion class.)

With the information provided above we will prove that right Gabriel filters
are examples of divisible right Oka families.

\begin{proposition}
\label{GPIP}
Over a ring $R$, any right Gabriel filter $\F$ is a divisible right Oka family.
Any right ideal $P\in\Max(\F')$ is comonoform.
\end{proposition}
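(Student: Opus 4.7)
The plan is to verify the three defining properties of a divisible right Oka family for $\F$ and then invoke Theorem~\ref{divisible Oka families} for the final clause. Divisibility is the easiest part: it is exactly axiom~(3) of Definition~\ref{Gabriel filter definition}, so nothing needs to be proved there. For containment of the unit ideal, since $\F$ is nonempty, pick any $I \in \F$; then $R \supseteq I$ gives $R \in \F$ by axiom~(1).

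The substantive step is to verify the Oka property. Here I would exploit the translation between membership in $\F$ and $\F$-torsion: a right ideal $J$ lies in $\F$ if and only if $R/J$ is $\F$-torsion. Suppose $I_R \subseteq R$ and $a \in R$ satisfy $I + aR \in \F$ and $a^{-1}I \in \F$. Consider the short exact sequence
\[
0 \to (I+aR)/I \to R/I \to R/(I+aR) \to 0.
\]
By Lemma~\ref{cyclic module lemmas}(A) we have $(I+aR)/I \cong R/a^{-1}I$, which is $\F$-torsion since $a^{-1}I \in \F$; and $R/(I+aR)$ is $\F$-torsion since $I+aR \in \F$. Because the class $\T_{\F}$ of $\F$-torsion modules is closed under extensions (one of the defining features of a hereditary torsion class, as recalled in the excerpt), $R/I$ is $\F$-torsion, i.e.\ $I \in \F$. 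This establishes the Oka property, so $\F$ is a divisible right Oka family.

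Alternatively, and perhaps more cleanly, I could set $\C := \T_{\F} \cap \M_R^c$ and observe that $\F = \F_{\C}$; since $\T_{\F}$ is closed under extensions in $\M_R$, so is $\C$ within $\M_R^c$, and Theorem~\ref{extension correspondence} then tells us that $\F_{\C}$ is a right Oka family for free. Either route will do. Finally, the statement that every $P \in \Max(\F')$ is comonoform is a direct application of Theorem~\ref{divisible Oka families} to the divisible right Oka family $\F$ we have just verified. I do not anticipate any serious obstacle; the only point to watch is that axiom~(4) of Definition~\ref{Gabriel filter definition} is never needed (divisibility plus extension-closure of $\T_{\F}$ suffices), so the proof does not require the full strength of the Gabriel filter axioms.
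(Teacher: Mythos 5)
Your proof is correct and, in its preferred form (taking $\C:=\T_{\F}\cap\M_R^c$ and invoking Theorem~\ref{extension correspondence}, then Theorem~\ref{divisible Oka families}), it is exactly the paper's argument; the direct verification via the sequence $0\to (I+aR)/I\to R/I\to R/(I+aR)\to 0$ is just that same argument unwound for a single cyclic extension. One small caveat: your closing remark that axiom~(4) of Definition~\ref{Gabriel filter definition} is ``never needed'' is misleading, since axiom~(4) is precisely what guarantees that $\T_{\F}$ is closed under extensions (a filter satisfying only (1)--(3) need not have this property), so the full strength of the Gabriel filter axioms is in fact used, merely packaged inside the recalled torsion-theoretic fact.
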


\begin{proof}
Any right Gabriel filter is tautologically a divisible family of right ideals.
The torsion class $\T_{\F}$ is closed under extensions in $\M_R$, so the class
$\C:=\T_{\F}\cap\M_R^c$ of cyclic $\F$-torsion modules is closed under extensions.
A right ideal $I\subseteq R$ lies in $\F$ iff $R/I\in\T_{\F}$ (as mentioned above),
iff $R/I\in\C$ (since $R/I\in\M_R^c$), iff $I\in\F_{\C}$.
It follows from Theorem~\ref{extension correspondence} that $\F=\F_{\C}$ is a right
Oka family.
The last sentence is true by Theorem~\ref{divisible Oka families}.
\end{proof}

We pause for a moment to give a sort of ``converse'' to this result, in the
spirit of Proposition~\ref{CPIP converse}.
Given any injective module $E_{R}$, the class $\{M_{R}:\Hom(M,E)=0\}$ is a
hereditary torsion class. This is called the \emph{torsion class cogenerated by $E$}.
We will also say that the corresponding right Gabriel filter is the \emph{right Gabriel
filter cogenerated by $E$}. As stated in~\cite[VI.5.6]{Stenstroem}, this is
the largest right Gabriel filter with respect to which $E$ is torsionfree. 
Let $I$ be a right ideal in $R$. In the following, we let $\F_I$ denote the
right Gabriel filter cogenerated by $E(R/I)$; that is, $\F_I$ is the set
of all right ideals $J\subseteq R$ such that $\Hom_R(R/J,E(R/I))=0$.  
We are now ready for the promised result. 

\begin{proposition}
\label{GPIP converse}
For any right ideal $P\subsetneq R$, the following are equivalent:
\begin{itemize}
\item[\normalfont (1)] $P\in\Max (\F ')$ for some right Gabriel filter $\F$;
\item[\normalfont (2)] $P\in\Max (\F_P^{\prime})$;
\item[\normalfont (3)] $P$ is a comonoform right ideal.
\end{itemize}
\end{proposition}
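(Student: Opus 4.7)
The plan is to prove the cycle (2) $\Rightarrow$ (1) $\Rightarrow$ (3) $\Rightarrow$ (2), exploiting the fact that most of the work has already been done in preceding results.

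First, (2) $\Rightarrow$ (1) is a triviality: the filter $\F_P$ \emph{is} a right Gabriel filter by construction, so we just take $\F = \F_P$. Next, (1) $\Rightarrow$ (3) is immediate from Proposition~\ref{GPIP}, which asserts that any right ideal maximal in the complement of a right Gabriel filter is comonoform.

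The substantive direction is (3) $\Rightarrow$ (2), where I must verify both that $P \notin \F_P$ and that every right ideal $I$ with $I \supsetneq P$ belongs to $\F_P$. The first is immediate: the canonical embedding $R/P \hookrightarrow E(R/P)$ is a nonzero element of $\Hom_R(R/P, E(R/P))$ (nonzero because $P \subsetneq R$), so by definition of the Gabriel filter cogenerated by $E(R/P)$, $P \notin \F_P$. For the second, suppose $I \supsetneq P$. Then $R/I \cong (R/P)/(I/P)$ is a proper factor of $R/P$, and since $P$ is comonoform, the module $R/P$ is monoform. The key step is to invoke the characterization~(6) of Proposition~\ref{monoform characterization} (in its ``$E(M)$'' version): no submodule of a proper factor of a monoform module $M$ admits a nonzero homomorphism to $E(M)$. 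Applying this with $M = R/P$ and taking the submodule of the proper factor $R/I$ to be $R/I$ itself yields $\Hom_R(R/I, E(R/P)) = 0$, which is precisely the condition $I \in \F_P$.

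I do not expect a serious obstacle here; the whole argument is a direct bookkeeping exercise once the right characterization of monoform modules has been identified. The one place that requires a small moment of care is matching up the definition of $\F_P$ (via $\Hom(R/J, E(R/P)) = 0$) with the condition ``$R/I$ is $\F_P$-torsion'' used implicitly in Proposition~\ref{GPIP}; but this is exactly how the Gabriel filter cogenerated by an injective module is defined, so no extra verification is needed.
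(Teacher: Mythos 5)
Your proof is correct and follows essentially the same route as the paper: (2)$\Rightarrow$(1) is trivial since $\F_P$ is a right Gabriel filter, (1)$\Rightarrow$(3) is Proposition~\ref{GPIP}, and (3)$\Rightarrow$(2) uses the monoform characterization to get $\Hom_R(R/I,E(R/P))=0$ for all $I\supsetneq P$. The only difference is that you spell out the (easy) verification $P\notin\F_P$ via the nonzero inclusion $R/P\hookrightarrow E(R/P)$, which the paper leaves implicit.
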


\begin{proof}
(2)$\implies$(1) is clear, and (1)$\implies$(3) follows from Theorem~\ref{GPIP}.
For (3)$\implies$(2), assume that $R/P$ is monoform. Proposition~\ref{monoform characterization}
implies that for every right ideal $I\supsetneq P$ we have $\Hom_R(R/I,E(R/P))=0$.
Then every such right ideal $I$ tautologically lies in $\F_P$, proving that $P\in\Max(\F_P^{\prime})$.
\end{proof}

We mention in passing that this result is similar to~\cite[Thm.~2.9]{GordonRobson},
though it is not stated in quite the same way.
This proposition actually provides a second, though perhaps less satisfying, proof
that any comonoform right ideal is completely prime. Given a comonoform right ideal
$P\subseteq R$, Proposition~\ref{GPIP converse} provides a right Gabriel filter $\F$
with $P\in\Max(\F')$. Then because $\F$ is a right Oka family (by Theorem~\ref{GPIP}),
the CPIP implies that $P$ is a completely prime right ideal.

\separate

As a first application of Theorem~\ref{GPIP} we explore the maximal point annihilators
of an \emph{injective} module, recovering a result of Lambek and Michler
in~\cite[Prop.~2.7]{LambekMichler}. This should be compared with
Proposition~\ref{point annihilators}.

\begin{proposition}[Lambek and Michler]
For any injective module $E_R$, a maximal point annihilator of $E$ is comonoform.
\end{proposition}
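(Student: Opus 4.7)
The plan is to realize the maximal point annihilators of $E$ as $\Max(\F')$ for a suitable divisible right Oka family $\F$, and then appeal to Theorem~\ref{divisible Oka families}. Prompted by the injectivity of $E$, the natural choice is
\[
\F := \{ I_R \subseteq R : \Hom_R(R/I, E) = 0 \} = \F_{\C},
\]
where $\C := \E \cap \M_R^c$ with $\E := \{ M_R : \Hom_R(M, E) = 0 \}$.

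The first step is to verify that $\E$ is closed under both extensions and submodules in $\M_R$. Extension-closure follows immediately from the left-exactness of $\Hom_R(-, E)$. Submodule-closure is where injectivity of $E$ is used: given $M' \subseteq M$ with $\Hom_R(M, E) = 0$, any homomorphism $M' \to E$ extends by injectivity to a map $M \to E$, which must then vanish, forcing the original map to have been zero. The Remark preceding Corollary~\ref{RRM domain} now shows that $\F = \F_\C$ is a divisible right Oka family.

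The second step is to identify $\Max(\F')$ with the set of maximal point annihilators of $E$. A right ideal $I$ lies in $\F'$ iff there is a nonzero $e \in E$ with $eI = 0$, equivalently iff $I \subseteq \ann(e)$ for some $0 \neq e \in E$; hence $\F'$ consists precisely of those right ideals contained in some point annihilator of $E$. From this it is immediate that $\Max(\F')$ is exactly the collection of maximal point annihilators of $E$. Theorem~\ref{divisible Oka families} then yields the desired comonoformity.

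The main subtle point is the submodule-closure of $\E$, which is precisely the place where injectivity of $E$ matters. It is tempting to instead quote Proposition~\ref{point annihilators} directly, but the right Oka family ``$I$ is not a point annihilator of $E$'' is not always divisible even when $E$ is injective; replacing ``not a point annihilator'' by the a~priori stronger condition $\Hom_R(R/I, E) = 0$ is exactly the refinement that brings divisibility into play.
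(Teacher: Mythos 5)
Your proof is correct and takes essentially the same route as the paper: the paper works with the very same family $\F=\{I_R\subseteq R:\Hom_R(R/I,E)=0\}$, identifies $\Max(\F')$ with the maximal point annihilators of $E$, and concludes via Proposition~\ref{GPIP} (which itself rests on Theorem~\ref{divisible Oka families}); the only difference is that the paper quotes the fact that $\F$ is the right Gabriel filter cogenerated by $E$, whereas you verify by hand that $\{M_R:\Hom_R(M,E)=0\}$ is closed under extensions and, using injectivity, under submodules, and then invoke the divisibility remark together with Theorem~\ref{divisible Oka families}. Your inlined verification (including the correct observation that injectivity enters exactly at submodule-closure) is sound and slightly more self-contained, but it is the same underlying mechanism as the paper's proof.
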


\begin{proof}
Let $\F=\{I_{R}\subseteq R : \Hom_R (R/I,E)=0\}$ be the right Gabriel
filter cogenerated by $E$. Then the set of maximal point annihilators of $E$
is clearly equal to $\Max(\F')$. By Theorem~\ref{GPIP}, any $P\in\Max(\F')$
is comonoform.
\end{proof}

\begin{example}
As shown in~\cite[VI.6]{Stenstroem}, the set $\F$ of all dense right ideals
in any ring $R$ is a right Gabriel filter. (In fact, it is the right Gabriel
filter cogenerated by the injective module $E(R_R)$.) Therefore $\F$ is a right
Oka family, and a right ideal maximal with respect to not being dense in
$R$ is comonoform.
Furthermore, in a right nonsingular ring, this family $\F$ coincides with
the set of all essential right ideals (see~\cite[(8.7)]{Lectures}
or~\cite[VI.6.8]{Stenstroem}). Thus in a right nonsingular ring, the family
$\F$ of essential right ideals is a right Gabriel filter, and a right ideal
maximal with respect to not being essential is comonoform.
\end{example}

\begin{example}
\label{Ore set example}
Let $S$ be a right Ore set in a ring $R$, and let $\F$ denote the family of
all right ideals $I_R\subseteq R$ such that $I\cap S\neq\varnothing$. It is
shown in the proof of~\cite[Prop.~VI.6.1]{Stenstroem} that $\F$ is a right
Gabriel filter. It follows from Theorem~\ref{GPIP} that a right ideal maximal
with respect to being disjoint from $S$ is comonoform.
\end{example}

We offer an application of the example above. Let us say that a multiplicative
set $S$ in a ring $R$ is \emph{right saturated} if $ab\in S$ implies
$a\in S$ for all $a,b\in R$.

\begin{corollary}
For every right saturated right Ore set $S\subseteq R$, there exists a set
$\{P_i \}$ of comonoform right ideals such that $R\setminus S=\bigcup P_i$.
\end{corollary}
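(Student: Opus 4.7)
The plan is, for each $a \in R \setminus S$, to produce a comonoform right ideal $P_a$ containing $a$ and disjoint from $S$. Setting $\{P_i\} := \{P_a : a \in R \setminus S\}$ will then give $R \setminus S = \bigcup_i P_i$: the inclusion $\subseteq$ is immediate from $a \in P_a$, and $\supseteq$ holds because each $P_a \cap S = \varnothing$.

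The first step is to invoke right saturation to show that $aR \cap S = \varnothing$ whenever $a \in R \setminus S$. Indeed, any element $ar \in aR \cap S$ would force $a \in S$ by right saturation, a contradiction. Thus the principal right ideal $aR$ lies in the complement $\F'$ of the right Gabriel filter $\F = \{I_R \subseteq R : I \cap S \neq \varnothing\}$ from Example~\ref{Ore set example}.

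Next, since the union of any ascending chain of right ideals disjoint from $S$ is still disjoint from $S$, a standard Zorn's lemma argument produces a right ideal $P_a$ that is maximal among right ideals containing $aR$ and disjoint from $S$. Any right ideal properly containing such a $P_a$ and still disjoint from $S$ would also contain $aR$, contradicting maximality; hence $P_a \in \Max(\F')$.

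Finally, by Example~\ref{Ore set example} (equivalently, by Theorem~\ref{GPIP} applied to the right Gabriel filter $\F$), every element of $\Max(\F')$ is comonoform, so $P_a$ is comonoform as required. There is no real obstacle in this argument; right saturation is exactly the hypothesis needed to certify that each singleton $\{a\}\subseteq R\setminus S$ sits inside some member of $\F'$, after which the substantive work has already been carried out in Theorem~\ref{GPIP}.
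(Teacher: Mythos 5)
Your proof is correct and is essentially the paper's own argument: use right saturation to see $aR\cap S=\varnothing$, apply Zorn's lemma to get a right ideal containing $a$ maximal with respect to being disjoint from $S$, and invoke Example~\ref{Ore set example} (Theorem~\ref{GPIP}) to conclude it is comonoform. The only difference is that you spell out explicitly why maximality in the restricted poset of right ideals containing $aR$ already gives membership in $\Max(\F')$, a step the paper leaves implicit.
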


\begin{proof}
Indeed, for all $x\in R\setminus S$, we must have $xR\subseteq
R\setminus S$ because $S$ is right saturated. By a Zorn's Lemma argument,
there is a right ideal $P_{x}$ containing $x$ maximal with respect to being
disjoint from $S$. Example~\ref{Ore set example} implies that $P_{x}$ is
comonoform. Choosing such $P_{x}$ for all $x\in R\setminus S$, we have
$R\setminus S=\bigcup P_x$.
\end{proof}

Next we apply Example~\ref{Ore set example} to show that a ``nice enough''
prime (two-sided) ideal must be ``close to'' some comonoform right ideal.

\begin{corollary}
Let $P_0\in\Spec(R)$ be such that $R/P_0$ is right Goldie. Then there exists a
comonoform right ideal $P_R\supseteq P_0$ such that $\core(P)=P_0$. In particular,
if $R$ is right noetherian then every prime ideal occurs as the core of some
comonoform right ideal.
\end{corollary}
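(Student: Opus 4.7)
The plan is to reduce to the quotient $\bar{R} := R/P_0$, which is a prime right Goldie ring, and then construct the desired comonoform right ideal using Example~\ref{Ore set example}. By Goldie's theorem, the set $S$ of regular elements of $\bar{R}$ is a right Ore set (in fact $\bar{R}$ has a simple artinian classical right ring of fractions, but we shall only need the Ore condition and one corollary of Goldie). Since $0 \notin S$, the zero right ideal of $\bar{R}$ is disjoint from $S$, so a standard Zorn's lemma argument (the union of a chain of right ideals avoiding $S$ still avoids $S$) yields a right ideal $\bar{P} \subseteq \bar{R}$ that is maximal with respect to being disjoint from $S$. Example~\ref{Ore set example} then immediately shows that $\bar{P}$ is a comonoform right ideal of $\bar{R}$.

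Next, I would verify that $\core(\bar{P}) = 0$. This is where I invoke the standard consequence of Goldie's theorem that every nonzero two-sided ideal of a prime right Goldie ring contains a regular element: if $\core(\bar{P})$ were nonzero, it would meet $S$, contradicting $\bar{P} \cap S = \varnothing$. Hence $\core(\bar{P}) = 0$.

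Finally, I would lift the picture back to $R$. Let $P \subseteq R$ be the preimage of $\bar{P}$ under the projection $R \twoheadrightarrow \bar{R}$, so $P \supseteq P_0$ and $P/P_0 = \bar{P}$. By the comonoform version of Remark~\ref{spectrum correspondence} noted earlier in this section (an ideal-contained right ideal $J \supseteq I$ is comonoform in $R$ iff $J/I$ is comonoform in $R/I$), the right ideal $P$ is comonoform in $R$. The familiar correspondence between two-sided ideals of $R$ containing $P_0$ and two-sided ideals of $\bar{R}$ yields $\core(P)/P_0 = \core(\bar{P}) = 0$, hence $\core(P) = P_0$. The final sentence of the corollary is then immediate: when $R$ is right noetherian, every $R/P_0$ is right noetherian and therefore right Goldie.

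The only nontrivial step is the appeal to the Goldie-theoretic fact that nonzero two-sided ideals of a prime right Goldie ring contain regular elements; the rest is a routine combination of Zorn's lemma, Example~\ref{Ore set example}, and the quotient correspondences already set up in the paper.
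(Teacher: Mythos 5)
Your proposal is correct and follows essentially the same route as the paper: pass to the prime right Goldie ring $R/P_0$, take a right ideal maximal with respect to missing the right Ore set of regular elements, apply Example~\ref{Ore set example}, and use the fact that nonzero ideals of a prime right Goldie ring contain regular elements (which the paper derives via essentiality plus~\cite[(11.13)]{Lectures}) to get zero core, then lift back through Remark~\ref{spectrum correspondence}.
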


\begin{proof}
Remark~\ref{spectrum correspondence} shows that, for any right ideal
$L_R\subseteq R$ and any two-sided ideal $I\subseteq L$, $L$ is comonoform in
$R$ iff $L/I$ is comonoform in $R/I$. Then passing to the factor ring $R/P_0$,
it clearly suffices to show that {\em in a prime right Goldie ring $R$ there
exists a comonoform right ideal $P$ of $R$ with zero core.} Indeed, let
$S\subseteq R$ be the set of regular elements, and let $P_R\subseteq R$ be
maximal with respect to $P\cap S=\varnothing$. Because $R$ is prime right
Goldie it is a right Ore ring by Goldie's Theorem, making $S$ a right Ore set.
Then $P$ is comonoform by Example~\ref{Ore set example}. We claim that $\core(P)=0$.
Indeed, suppose that $I\neq 0$ is a nonzero ideal of $R$. Then since $R$ is
prime, $I$ is essential as a right ideal in $R$. It follows from the theory
of semiprime right Goldie rings  that $I\cap S\neq \varnothing$ (see, for
instance,~\cite[(11.13)]{Lectures}). This means that we cannot have
$I\subseteq P$, verifying that $\core(P)=0$.
\end{proof}

Notice that the above condition on $P_0$ is satisfied if $R/P_0$ is right noetherian.
Conversely, it is not true that the core of every comonoform right ideal is
prime, even in an artinian ring. For example, let $R$ be the ring of
$n\times n$ upper-triangular matrices over a division ring $k$ for $n\geq 2$,
and let $P\subsetneq R$ be the right ideal consisting of matrices in $R$
whose first row is zero. Then one can verify that $R/P$ is monoform (for
example, using a composition series argument), so that $P$ is comonoform.
But the ideal $\core(P)=0$ is not (semi)prime.

We also provide a slight variation of Corollary~\ref{domain characterization},
which tested whether or not a ring $R$ is a domain. The version below applies
when $R$ is a right Ore ring.

\begin{proposition}
A right Ore ring $R$ is a domain iff every nonzero comonoform right ideal of
$R$ contains a regular element.
\end{proposition}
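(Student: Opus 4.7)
The forward direction is immediate: in a domain every nonzero element is regular, so every nonzero right ideal (in particular every nonzero comonoform right ideal) contains a regular element. So the content is in the converse, and the natural tool is the DPIP Supplement~\ref{DPIP supplement} applied to an appropriate Gabriel filter.

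The plan is to let $S\subseteq R$ denote the set of regular elements, which is a right Ore set by the standing assumption that $R$ is a right Ore ring. Let $\F$ be the family of right ideals $I_R\subseteq R$ with $I\cap S\neq\varnothing$. By Example~\ref{Ore set example} this is a right Gabriel filter, and hence by Proposition~\ref{GPIP} it is a divisible right Oka family. The complement $\F'$ consists of right ideals disjoint from $S$, and the union of any chain in $\F'$ is again disjoint from $S$, so every chain in $\F'$ has an upper bound in $\F'$. Thus the hypothesis of Theorem~\ref{DPIP supplement} is satisfied.

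Next, the hypothesis of the proposition says precisely that every comonoform right ideal \emph{properly containing} $0$ lies in $\F$. Applying Theorem~\ref{DPIP supplement}(2) with $J=0$, I conclude that every nonzero right ideal of $R$ lies in $\F$, that is, every nonzero right ideal of $R$ contains a regular element.

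Finally I deduce that $R$ is a domain. Suppose $a,b\in R$ with $ab=0$ and $b\neq 0$. Then $bR\neq 0$, so by the previous step $bR$ contains a regular element $bs$ for some $s\in R$. Since $\ann_{\ell}(bs)=0$ and $a\cdot(bs)=(ab)s=0$, we must have $a=0$. Thus $R$ has no nonzero zero-divisors, completing the proof. There is no real obstacle here: the entire argument is an application of the machinery already set up for comonoform right ideals, and the only nontrivial input is the recognition that the family of right ideals meeting the Ore set $S$ of regular elements is the correct divisible right Oka family to feed into the DPIP Supplement.
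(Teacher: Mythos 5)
Your proof is correct and follows essentially the same route as the paper: the family of right ideals meeting the right Ore set of regular elements is a right Gabriel filter (Example~\ref{Ore set example}), hence a divisible right Oka family, and the supplement with $J=0$ upgrades the hypothesis on nonzero comonoform right ideals to all nonzero right ideals, from which the domain conclusion follows. If anything, your citation of Theorem~\ref{DPIP supplement}(2) is the logically correct one, since the hypothesis only concerns comonoform (not all completely prime) right ideals and thus genuinely needs the divisible version of the supplement.
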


\begin{proof}
(``If'' direction) Let $S\subseteq R$ be the set of regular elements of $R$.
Then $S$ is a right Ore set, so the family $\F:=\{I_R\subseteq R : I\cap S\neq\varnothing\}$
is a right Gabriel filter (in particular, a divisible right Oka family) by
Example~\ref{Ore set example}. Clearly the union of a chain of right ideals in
$\F'$ also lies in $\F'$. By Theorem~\ref{CPIP supplement}, if every
nonzero comonoform right ideal of $R$ contains a regular element, then so does
every nonzero right ideal. It follows easily that $R$ is a domain.
\end{proof}

\separate

As a closing observation, we note that there is a second way (aside from
Theorem~\ref{GPIP}) that right Gabriel filters give rise to comonoform
right ideals.
Given a right Gabriel filter $\G$ in a ring $R$, the class of $\G$-torsionfree
modules is closed under extensions and submodules (just as the class of
$\G$-torsion modules was).
Thus a right ideal $I$ of $R$ maximal with respect to the property that $R/I$ 
is not $\G$-torsionfree must be comonoform by Theorem~\ref{divisible Oka families}.
A similar statement was shown to be true for the $S$-torsionfree property, where
$S$ is a multiplicative set. However, there is a logical relation between these
facts only in the case that $S$ is right Ore, when the family $\G$ of right ideals
intersecting $S$ is a right Gabriel filter.

\section*{Acknowledgments}

I am truly grateful to Professor T.\,Y.~Lam for his patience, advice, and encouragement
during the writing of this paper, as well as his help formulating
Proposition~\ref{no extremely primes}.
I also thank Professor G.~Bergman for providing me with many useful comments after a
very careful reading of a draft of this paper, and particularly for finding the first proof of
Proposition~\ref{torsionfree characterization} (and Corollary~\ref{domain characterization}).
Finally, I thank the referee for an insightful review that led to a substantially better
organization of this paper.

\bibliographystyle{amsplain}
\bibliography{CPIPv4}
\end{document}